\documentclass[12pt, reqno]{amsart}

\usepackage{amssymb,amsmath,amsthm,amsfonts}
\usepackage{setspace}
\usepackage{enumerate}
\usepackage{amsaddr}
\usepackage{latexsym}
\usepackage{color}
\usepackage{hyperref}
\hypersetup{
  colorlinks   = true,    
  urlcolor     = blue,    
  linkcolor    = blue,    
  citecolor    = red      
}
\usepackage{graphicx}

\newtheorem{theorem}{Theorem}[section]
\newtheorem{lemma}[theorem]{Lemma}
\newtheorem{proposition}[theorem]{Proposition}
\newtheorem{prop}[theorem]{Proposition}
\newtheorem{cor}[theorem]{Corollary}
\newtheorem{definition}[theorem]{Definition}
\newtheorem{rmk}[theorem]{Remark}


\def\R{{\mathbb R}}

\def\wh{\widehat}
\def\la{\langle}
\def\ra{\rangle}
\def\del{\partial}
\def\tu{\widetilde{u}}
\def\tv{\widetilde{v}}

\newcommand{\norm}[1]{\left \Vert #1 \right \Vert}
\newcommand{\jbrac}[1] {\langle #1 \rangle}
\newcommand{\lsim}{\lesssim}
\newcommand{\gsim}{\gtrsim}

\def\R{{\mathbb R}}
\def\T{{\mathbb T}}

\headheight=8pt
\topmargin=0pt
\textheight=624pt
\textwidth=462pt
\oddsidemargin=18pt
\evensidemargin=18pt
\hyphenation{di-men-sion-al}
\doublespacing 
\linespread{1.3}
\allowdisplaybreaks \sloppy


\begin{document}

\title[Majda-Biello on the half line]{Well-posedness for the Majda-Biello System on the Half Line}

\author{Ellis, Matthew}
\address{Department of Mathematics \\
University of Illinois \\
Urbana, IL 61801, U.S.A.}
\email{ellis23@illinois.edu}
 
\date{}

\begin{abstract}
We study the initial-boundary value problem for the Majda-Biello system posed on the right half line. We prove local well-posedness on the half line, matching the local theory on the real line established by Oh \cite{oh}. The approach combines the Laplace transform method of Bona-Sun-Zhang \cite{bona} with adapted estimates from the work of Colliander and Kenig on the KdV half line initial-boundary value problem \cite{kenig}. 
\end{abstract}

\maketitle

\section{Introduction}

The purpose of this paper is to study the well-posedness and related properties for the Majda-Biello system as an initial-boundary value problem (IBVP) on the half line $\R^+ = (0,\infty)$
\begin{equation} \label{majdaIBVP}
\begin{cases}
u_t + u_{xxx} + vv_x = 0 \\
v_t + \alpha v_{xxx} +  (uv)_x = 0 \qquad x, t \in \R^+ \\
(u,v)|_{t=0} = (u_0, v_0) \in H^s(\R^+), \qquad (u,v)|_{x=0} = (f,g) \in H^{\frac{s+1}{3}}(\R^+).
\end{cases}
\end{equation}

The corresponding system in the periodic setting was originally proposed by Majda-Biello in \cite{majda} as an asymptotic model for the nonlinear interactions of atmospheric waves. Rossby waves, also known as planetary waves, are long wavelength dispersive waves which have important effects on weather patterns and ocean currents. Here $v$ represents an equatorial Rossby wave, and $u$ represents a barotropic Rossby wave with significant mid-latitude projections. We will take the coupling parameter $\alpha$ to be in $(0,1)$. This is the region where the local theory differs most significantly from that for the KdV equation, and it seems to be the most relevant physically. Indeed, Majda and Biello obtained estimates of $0.899$, $0.960$, and $0.980$ for $\alpha$ in the  cases they considered \cite{majda}. 

While the periodic setting ($x \in \T$) is the most natural for studying atmospheric waves, the non-periodic setting ($x \in \R$) is also of interest for its applications to oceanic waves and for theoretical reasons as a generalization of the widely-studied KdV equation. The half line is also a natural setting to consider for wave behavior, corresponding to waves generated at one end and allowed to propagate freely. Well-posedness is more difficult on the half line, and some care must be taken to define the function spaces $H^s(\R^+)$, compatibility between initial and boundary data, and the notion of uniqueness of mild solutions. Before discussing these issues and stating our results, we review the local theory on the full line ($\R$) for the KdV equation and for the Majda-Biello system.

In order to study low regularity solutions to the KdV initial value problem (IVP),
\begin{equation} \label{kdvIVP}
\begin{cases}
u_t + u_{xxx} + uu_x = 0 \quad x \in \R\\
u(x,0) = g(x) \in H^s(\R)
\end{cases}
\end{equation}
Bourgain introduced the restricted norm space, $X^{s,b}$, defined by 
\begin{equation} \label{xsb}
\|u\|_{X^{s,b}(\R \times \R)} = \|\la\xi\ra^s \la \tau-\xi^3\ra^b \widehat{u}(\xi,\tau)\|_{L^2_{\xi,\tau}}
\end{equation}
where $\la \xi \ra = 1 + |\xi|$ and $\widehat{u}(\xi,\tau)$ denotes the space-time Fourier transform. This space makes use of the idea that the space-time Fourier transform of a solution to the linear equation is supported on the curve $\{\tau=\xi^3\}$. By a contraction argument in this space, Bourgain proved in \cite{bourgain} that \eqref{kdvIVP} is locally well-posed for all $s\geq 0$. The $L^2$ conservation law ensures that the local solutions are global in time, so this result implies global well-posedness (GWP) for the KdV on $\R$ for all $s\geq 0$. The proof relies on a key bilinear estimate
\begin{equation} \label{bil_kdv}
\| \partial_x(uv) \|_{X^{s,b-1}(\R)} \lsim \|u\|_{X^{s,b}} \|v\|_{X^{s,b}} \quad \text{for } s\geq 0, \,  b>\frac12.
\end{equation}

Kenig-Ponce-Vega showed that \eqref{bil_kdv} in fact holds for $s>-\frac34$, thereby establishing local well-posedness (LWP) for the KdV equation on $\R$ for $s>-\frac34$, \cite{kpv}. Moreover, this result almost sharp as the KdV equation is ill-posed for $s<-\frac34$, in the sense that the data-to-solution map $u_0 \to u$ from $H^s(\R)$ to $H^s(\R)$ is not $C^2$ \cite{tzv}. GWP for $s>-\frac34$ was obtained by Colliander-Keel-Staffilani-Takaoka-Tao \cite{colliander} via the I-method. Although the bilinear estimate \eqref{bil_kdv} fails at the endpoint $s = -\frac34$, the KdV equation was shown to be locally \cite{christ} and globally \cite{guo} well-posed for $s = -\frac34$ by making use of the (modified) Miura transform.

Returning to the Majda-Biello system, we consider the initial value problem
\begin{equation} \label{majdaIVP}
\begin{cases}
u_t + u_{xxx} + vv_x = 0  \\
v_t + \alpha v_{xxx} +  (uv)_x = 0 \quad x \in \R \\
(u,v)|_{t=0} = (u_0, v_0) \quad u_0,v_0 \in H^s(\R)
 \end{cases}
\end{equation}
which was studied by Oh in both the periodic and non-periodic settings \cite{oh}. Compaan showed that the periodic problem exhibits smoothing and studied the existence of global attractors \cite{compaan}. The results on $\R$ are more relevant to the half-line problem however, so we focus on these here. We remark that, unlike the KdV equation on $\R$, the system \eqref{majdaIVP} is not completely integrable, even for $\alpha = 1$. The following quantities are conserved for this system 
\begin{equation} \label{conserved}
\int u\, dx \qquad \int v\, dx \qquad \int u^2+v^2 \, dx \qquad \frac12 \int u_x^2 + \alpha v_x^2 -uv^2 \, dx
\end{equation}
corresponding to masses, energy, and the Hamiltonian. It has been shown that there are no higher conservation laws \cite{vodova}. In contrast, the KdV has infinitely many conservation laws. Powerful methods like the Miura transform and inverse-scattering techniques are not available for \eqref{majdaIVP}. The system  does have scaling, with the same critical regularity as the KdV equation, $s = -\frac32$.

Oh showed that the regularity required for well-posedness depends on the coupling parameter $\alpha$. When $\alpha = 1$, for example, the techniques from the KdV equation carry over, and LWP holds for $s >-\frac34$, just as for the KdV. A key ingredient in establishing the necessary bilinear estimates analogous to \eqref{bil_kdv} is the KdV algebraic frequency relation
\begin{equation} \label{alg_kdv}
\xi^3 - \xi_1^3 -\xi_2^3 = 3 \xi \xi_1 \xi_2 \qquad \text{for } \xi = \xi_1+\xi_2
\end{equation}
which is used to compensate for the derivative in the nonlinearity. The main obstacles in proving the bilinear estimates are thus resonant cases where $\xi \xi_1 \xi_2$ is small.

When $\alpha \neq 1$, the linear semigroups in the two equations are no longer identical, and the space-time Fourier transform of the solutions to the linear equations are now supported on distinct curves, $\{\tau = \xi^3\}$ and $\{\tau =\alpha\xi^3\}$. The relation \eqref{alg_kdv} no longer applies, and the resonant frequency interactions may be more complicated, making the bilinear estimates more difficult to establish. For $\alpha <0$ or $\alpha >4$, it turns out that the resonance equations have no real solutions, and thus LWP holds again for $s>-\frac34$. However, for $0<\alpha<1$, Oh proved LWP for \eqref{majdaIVP} when $s\geq 0$ and that this is sharp if we require the data-to-solution map to be $C^2$.

We turn now to the half line setting, $\R^+ = (0,\infty)$. In \cite{kenig}, Colliander-Kenig consider the KdV initial-boundary value problem (IBVP):

\begin{equation} \label{kdvIBVP}
\begin{cases}
u_t + u_{xxx} + uu_x = 0 \qquad x,t \in \R^+\\
u|_{t=0} = u_0(x) \in H^s(\R^+), \qquad u|_{x=0} = f \in H^{\frac{s+1}{3}}(\R^+)
\end{cases}
\end{equation}

where the $H^s(\R^+)$ norm is defined as:
\begin{equation} \label{R+norm}
\|f\|_{H^s(\R^+)} = \inf \{\|\tilde{f} \|_{H^s(\R)} : \tilde{f}(x) = f(x)\text{ for all }x>0 \}.
\end{equation}

Colliander-Kenig show the existence of solutions to \eqref{kdvIBVP}  locally in time for $0\leq s < \frac32$, $s \neq \frac12$ (the precise meaning of a solution is discussed in Section \ref{sec:notion}). Holmer extended this to $-\frac34 <s <\frac32$, $s \neq \frac12$ in \cite{holmer}. The technique used by Colliander-Kenig involves extending the initial data and recasting the problem as an IVP on $\R$ after introducing a Riemann-Liouville fractional integral forcing operator to satisfy the boundary condition. The proof then proceeds by a standard fixed point argument in the Bourgain spaces $X^{s,b}$ with a few important modifications. The half line theory requires that $b < \frac12$ to close the contraction argument, whereas $b>\frac12$ can be used on the full line. Indeed, the bilinear estimate \eqref{bil_kdv} fails for $b<\frac12$, with counterexamples arising due to poor control of the lower frequencies. To address this, Colliander-Kenig carry out the argument in a modified Bourgain space, $X^{s,b} \cap V^\gamma$, instead. We note that Colliander-Kenig and Holmer do not establish uniqueness, which is more difficult on the half line, especially at low regularity, as one must show that the solution does not depend on the choice of extension of the initial data.

In \cite{bona}, Bona-Sun-Zhang proposed an alternative approach via the Laplace transform which essentially separates the problem into a \textit{linear} IBVP on $\R^+$ and a nonlinear IVP on $\R$ after extending the initial data. With this technique an explicit solution to the linear IBVP is constructed, so the Riemann-Liouville forcing term of Colliander-Kenig is not needed. Many of the results and estimates from well-posedness arguments on $\R$ can then be applied to the nonlinear problem, with some important modifications. We describe this in more detail in the next section. Erdogan-Tzirakis have applied this technique to establish global well-posedness and smoothing properties for the cubic NLS on the half line \cite{tz_cubicnls}. Similar arguments have been carried out for the ``good" Boussinesq equation \cite{tz_good}, the Zakharov system \cite{tz_zakharov}, the derivative NLS \cite{tz_derivnls}, and the Klein-Gordon-Schrodinger sytem \cite{tz_klein}. We intend to follow this method to establish low regularity well-posedness for the Majda-Biello system \eqref{majdaIBVP}. They key step is the proof of the bilinear estimates in Section \ref{sec: bilinear}. 

Another approach to nonlinear IBVPs on the half line is the unified transform method of Fokas \cite{fokas}. In \cite{himonas}, Fokas, Himonas, and Mantzavinos applied this method to establish local well-posedness to the KdV IBVP \eqref{kdvIBVP} for $\frac34<s<1$. However, this method relies on inverse-scattering techniques and therefore requires complete integrability, which does not hold for our system \eqref{majdaIBVP}. The unified transform method also requires higher regularity, while we are interested in matching the low regularity from the local theory of the Majda-Biello IVP \eqref{majdaIVP} on the full line. 
 
We now outline the organization of this paper. In Section \ref{sec:notion} we discuss the $H^s(\R^+)$ spaces and properties related to extending the initial and boundary data. We also illustrate the Laplace transform method of \cite{bona}, define the explicit notion of a solution to an IBVP, and state the main theorem. In Section \ref{sec:apriori} we review the estimates on the linear terms.  In Section \ref{sec: bilinear} we establish the main bilinear estimates. We then prove Theorem \ref{thrm:main} in Section \ref{sec:proof}, beginning with the local theory in \ref{subsec:local} via the contraction fixed-point argument, followed by a discussion of uniqueness for $s>\frac32$ in \ref{subsec:uniqueness}.

\subsection{Notation}

The following notation will be used throughout the paper.\\
We define the Fourier transform on $\R$ as
$$\widehat{g}(\xi) = \mathcal{F}g(\xi) = \int_{\R} e^{-ix \cdot \xi} g(x) \, dx$$
and likewise the space-time Fourier transform:
$$\widehat{g}(\xi,\tau) = \mathcal{F}g(\xi,\tau) = \int_{\R^2} e^{-ix \cdot \xi} e^{-it\cdot \tau} g(x,t) \, dx dt.$$

\noindent We define the Sobolev space $H^s(\R)$ via the norm:
$$\|g\|_{H^s(\R)} = \|\la\xi\ra^s \widehat{g}(\xi) \|_{L^2(\R)}$$
where $\la\xi\ra := 1 + |\xi|$ (or equivalently $\sqrt{1+\xi^2}$).\\

\noindent We write $W^t u \; (x,t)$ and $W^t_\alpha v \;(x,t)$ for the linear Airy propagators:
\begin{equation*}
\begin{split}
W^t u \;(x,t) &= e^{-t\partial_{xxx}}u = \int e^{ix\xi} e^{it\xi^3}\widehat{u}(\xi,t) d\xi\\
W^t_\alpha v \; (x,t) &= e^{-\alpha t \del_{xxx}} v = \int e^{ix\xi} e^{i\alpha t\xi^3}\widehat{v}(\xi,t) d\xi.
\end{split}
\end{equation*}

\noindent For a space-time function we write $D_0$ for evaluation at the boundary $x=0$. By Fourier inversion we have
$$D_0(f(x,t)) = f(0,t) = \frac{1}{4\pi^2}\int_{\R^2} e^{it \tau} \wh{f}(\xi,\tau) d\xi d\tau$$

\noindent We will use the mixed norm notation for several a priori estimates
$$\|u\|_{L^p_xL^q_t} = \Big\| \|u\|_{L^q_t}\Big\|_{L^p_x},$$
and we make repeated use of the restricted norm spaces of Bourgain
\begin{align*}
\|u\|_{X^{s,b}} &= \|\la\xi\ra^s \la\tau-\xi^3\ra^b \widehat{u}(\xi,\tau)\|_{L^2_{\xi \tau}}\\
\|u\|_{X^{s,b}_\alpha} &= \|\la\xi\ra^s \la\tau-\alpha\xi^3\ra^b \widehat{u}(\xi,\tau)\|_{L^2_{\xi \tau}},
\end{align*}
as well as the low frequency modification space $V^\gamma$ introduced by Colliander-Kenig in \cite{colliander}
$$\|u\|_{V^\gamma} = \|\chi_{|\xi| \leq 1}\la\tau\ra^\gamma \widehat{u}(\xi,\tau)\|_{L^2_{\xi\tau}}, \quad \gamma > \frac12.$$

We write $\eta(t)$ for a $C^\infty_c (\R)$ function (smooth function with compact support) which is equal to $1$ on $[-1,1]$. Finally, we make use of the conventional notation $a \lsim b$, meaning  $a \leq C \, b$ for some absolute constant $C$. We define $a \gsim b$ similarly, and we write $a\sim b$ when $a \lsim b \lsim a$. 

\section{Notion of a solution}\label{sec:notion}

In this section we make precise the notion of a solution to the IBVP \eqref{majdaIBVP}, and what it means for a solutions to be locally well-posed.
We aim to find a solution to \eqref{majdaIBVP} with the additional compatibility condition $f(0) = u_0(0)$ and $g(0) = v_0(0)$ if $s>\frac12$. We will reformulate \eqref{majdaIVP} as an integral equation. 

We start by choosing extensions for the initial data such that $\|\tu_0\|_{H^s(\R)} \lsim \|u_0\|_{H^s(\R^+)}$ and $\|\tv_0\|_{H^s(\R)} \lsim \|v_0\|_{H^s(\R^+)}$. Following \cite{bona}, we split \eqref{majdaIVP} into two simpler problems. First, we have the nonlinear IVP on the full line
\begin{equation} \label{nonlinIVP}
\begin{cases}
u_t+u_{xxx} + vv_x = 0\\
v_t + \alpha v_{xxx} + (uv)_x = 0 \qquad x \in \R \\
(u,v)|_{t=0} = (\tu_0, \tv_0) \in H^s(\R).
\end{cases}
\end{equation}
By the Duhamel principle, up to a local existence time $T>0$, smooth solutions to \eqref{nonlinIVP} satisfy
\begin{align*}
u &= \eta(t)e^{-t \del_{xxx}} \tu_0 + \eta(t)\int_0^t e^{-(t-t') \del_{xxx}} F(u,v) \, dt' = \eta(t)W^t \tu_0 + \eta(t)\int_0^t W^{t-t'} F dt'\\
v &= \eta(t)e^{-\alpha t \del_{xxx}} \tv_0 + \eta(t)\int_0^t e^{-\alpha(t-t')\del_{xxx}} G(u,v) \, dt' = \eta(t)W^t_\alpha \tv_0 + \eta(t)\int_0^t W_\alpha^{t-t'} G dt' 
\end{align*}
where $F(u,v) = \eta(t/T) vv_x$, $G(u,v) = \eta(t/T) (uv)_x$, and $\eta(t) \in C^\infty_c(\R)$ and is identically $1$ on $[-1,1]$ . 
The smooth cutoff functions $\eta$ are important for closing the contraction argument by keeping each term on the right in an appropriate Banach space (given later in Definition \ref{def:LWP}).

We also have the \textit{linear} IBVP with zero initial data:
\begin{equation} \label{linIBVP}
\begin{cases}
u_t + u_{xxx} = 0 \\
v_t + \alpha v_{xxx} = 0 \qquad x, t \in \R^+ \\
(u,v)|_{t=0} = (0,0) \in H^s(\R^+), \qquad (u,v)|_{x=0} = (f - p, g-q) \in H^{\frac{s+1}{3}}(\R^+)
\end{cases}
\end{equation}
where $p$ and $q$ account for the boundary value of the nonlinear solution to \eqref{nonlinIVP}, ensuring the compatibility condition holds when $s>\frac12$
\begin{equation*}
\begin{split}
p &= \eta(t) D_0(W^t \tu_0) + \eta(t) D_0 \left(\int_0^t W^{t-t'} F dt'\right)\\
q &= \eta(t) D_0(W^t_\alpha \tv_0) + \eta(t) D_0 \left(\int_0^t W_\alpha^{t-t'} G dt'\right).
\end{split}
\end{equation*}
Note that \eqref{linIBVP} is decoupled because the nonlinearity is not included. By a formal application of the Laplace transform, described in section \ref{sec:boundary} below, solutions of \eqref{linIBVP} can be found explicitly. If we define
\begin{equation} \label{boundary}
W_1h (x,t) := \frac{3}{2\pi} \int \limits_0^\infty e^{\beta [-\frac{\sqrt{3}}{2} - \frac{1}{2}i]x} e^{i\beta^3 t} \rho(\beta x) \beta^2 \widehat{h}(\beta^3) d\beta \end{equation}

then \eqref{linIBVP} has solution
\begin{align*}
u &= 2\Re[W_1(f-p) (x,t)] \\
v &= 2 \Re[W_1(g-q) (\sqrt[3]{\alpha}\,x,t)].
\end{align*}

As discussed in section \ref{sec:boundary}, we take $\widehat{h}$ to mean $\mathcal{F}_t [\chi_{[0,\infty)}h ]$. Since we are not free to choose any extension $\widetilde{h}$ of the boundary data as we did with the initial data, we will need an estimate on the size of $\|\chi_{(0,\infty)}h\|_{H^{\frac{s+1}{3}}(\R)}$. For this, we appeal to lemma \ref{extensions}, (see \cite{tz_cubicnls}, or \cite{colliander} for a full discussion of these half line Sobolev spaces). For $0\leq \frac{s+1}{3} <\frac12$, the extension by zero is controlled by the original data. In fact, since we can define the Sobolev spaces with negative index by duality, this also applies for $-\frac12<\frac{s+1}{3}<\frac12$. For $\frac12<\frac{s+1}{3}<\frac32$, the fact that $h(0) = 0$ is a direct consequence of the compatibility condition we imposed in \eqref{majdaIVP}. We remark that the case $\frac{s+1}{3} = \frac12$ (i.e. $s = \frac12$) is excluded in Lemma \ref{extensions}. This is due to difficulty in formulating the compatibility with the trace operator.

\begin{lemma}\label{extensions} Let $h \in H^s(\R^+)$ for some $-\frac{1}{2} < s < \frac{3}{2}.$\\

\noindent i) If $-\frac{1}{2}<s<\frac{1}{2}$, then $\norm{\chi_{(0,\infty)}h}_{H^s(\R)} \lsim \norm{h}_{H^s(\R^+)}$ \\
ii) If $\frac{1}{2}<s<\frac{3}{2}$ and $h(0)=0$, then $\norm{\chi_{(0,\infty)}h}_{H^s(\R)} \lsim \norm{h}_{H^s(\R^+)}.$\\
\end{lemma}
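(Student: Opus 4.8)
The plan is to reduce the entire statement to the boundedness of multiplication by the Heaviside function $\chi_{(0,\infty)}$ on the whole-line space $H^s(\R)$, and then to establish that boundedness on the range $|s|<\frac12$ by a direct seminorm computation. First I would record the elementary reduction: by the definition \eqref{R+norm} of the half-line norm as an infimum, choose an extension $\tilde h$ of $h$ with $\norm{\tilde h}_{H^s(\R)} \lsim \norm{h}_{H^s(\R^+)}$. Since $\tilde h = h$ on $(0,\infty)$ we have $\chi_{(0,\infty)} h = \chi_{(0,\infty)} \tilde h$, so it suffices to prove the whole-line multiplier estimate $\norm{\chi_{(0,\infty)} g}_{H^s(\R)} \lsim \norm{g}_{H^s(\R)}$ for every $g \in H^s(\R)$ (for $s<0$ one combines this with the duality definition of the half-line spaces noted in the text).

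For part (i) with $0 \le s < \frac12$ I would work with the Gagliardo characterization $\norm{g}_{H^s(\R)}^2 \sim \norm{g}_{L^2}^2 + \iint \frac{|g(x)-g(y)|^2}{|x-y|^{1+2s}}\, dx\, dy$. The $L^2$ part is immediate since $|\chi_{(0,\infty)}| \le 1$. For the seminorm I split the $(x,y)$-plane into the three regions $\{x,y>0\}$, $\{x,y<0\}$, and the mixed region. On the first region the integrand for $\chi_{(0,\infty)} g$ equals that for $g$, on the second it vanishes, and the mixed region contributes (after integrating out the negative variable) a constant multiple of the weighted integral $\int_0^\infty |g(x)|^2 x^{-2s}\, dx$. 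The key analytic input is then the one-dimensional fractional Hardy inequality $\int_0^\infty |g(x)|^2 x^{-2s}\, dx \lsim \norm{g}_{\dot{H}^s(\R)}^2$, valid precisely for $0 < s < \frac12$ (the case $s=0$ being the trivial $L^2$ bound), which closes the estimate. The range $-\frac12 < s < 0$ then follows by duality: $g\mapsto \chi_{(0,\infty)}g$ is self-adjoint with respect to the $L^2$ pairing because $\chi_{(0,\infty)}$ is real-valued, so its boundedness on $H^{-s}(\R)$ with $-s \in (0,\frac12)$ transfers to $H^s(\R)$.

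For part (ii) with $\frac12 < s < \frac32$ I would bootstrap from (i) by one derivative, using the norm equivalence $\norm{g}_{H^s(\R)}^2 \sim \norm{g}_{H^{s-1}(\R)}^2 + \norm{g'}_{H^{s-1}(\R)}^2$ and noting that $s-1 \in (-\frac12,\frac12)$ lies in the range of part (i). Applying this with $g = \chi_{(0,\infty)} h$, I would bound $\norm{\chi_{(0,\infty)} h}_{H^{s-1}(\R)} \lsim \norm{h}_{H^{s-1}(\R^+)} \le \norm{h}_{H^s(\R^+)}$ directly by (i), and for the derivative term I would compute the distributional derivative $(\chi_{(0,\infty)} h)' = \chi_{(0,\infty)} h' + h(0)\,\delta_0$. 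This is exactly where the hypothesis $h(0)=0$ enters: it removes the Dirac mass, leaving $(\chi_{(0,\infty)} h)' = \chi_{(0,\infty)} h'$, and then part (i) applied to $h' \in H^{s-1}(\R^+)$ gives $\norm{\chi_{(0,\infty)} h'}_{H^{s-1}(\R)} \lsim \norm{h'}_{H^{s-1}(\R^+)} \lsim \norm{h}_{H^s(\R^+)}$. Summing the two contributions yields (ii).

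I expect the main obstacle to be the endpoint behavior concentrated at $s=\frac12$, which is precisely why that value is excluded. Analytically this surfaces in two linked places: the fractional Hardy inequality in (i) degenerates as $s \uparrow \frac12$, and in (ii) the surviving term $h(0)\,\delta_0$ would fail to lie in $H^{s-1}(\R)$ — since $\delta_0 \in H^\sigma(\R)$ only for $\sigma < -\frac12$, whereas here $s-1 > -\frac12$ — so the trace must genuinely vanish rather than merely be controlled. The careful point to verify is that the trace $h(0)$ is well-defined for $s>\frac12$, so that the compatibility condition $h(0)=0$ is meaningful and the distributional-derivative computation is legitimate; this is guaranteed by the trace theorem in this range.
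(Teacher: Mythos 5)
Your proof is correct, and a point of orientation first: the paper itself does not prove this lemma --- it is imported from the literature (the text points to \cite{tz_cubicnls} and \cite{colliander} for the half-line Sobolev space theory), so there is no internal proof to compare against. Your argument is essentially the standard one underlying those references: the reduction to the whole-line multiplier bound via a near-optimal extension, the Gagliardo seminorm splitting into $\{x,y>0\}$, $\{x,y<0\}$, and the mixed region producing $\int_0^\infty |g(x)|^2 x^{-2s}\,dx$, closed by the one-dimensional fractional Hardy inequality (valid precisely for $0<s<\frac12$), duality for $-\frac12<s<0$, and the one-derivative bootstrap in which $h(0)=0$ removes the Dirac mass. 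Your diagnosis of the excluded endpoint $s=\frac12$ --- degeneration of Hardy as $s\uparrow\frac12$ on one side, and $\delta_0\notin H^{s-1}(\R)$ for $s-1>-\frac12$ on the other --- is exactly the right explanation, and it matches the paper's remark that $s=\frac12$ is excluded because of the trace.

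Two steps deserve an explicit line of justification before the argument is airtight. First, in the duality range $-\frac12<s<0$ you should check that $\chi_{(0,\infty)}\tilde h$ (defined by $\langle \chi_{(0,\infty)}\tilde h,\phi\rangle = \langle \tilde h, \chi_{(0,\infty)}\phi\rangle$) does not depend on the choice of extension $\tilde h$: the difference of two extensions is supported in $(-\infty,0]$, and it pairs to zero against $\chi_{(0,\infty)}\phi\in H^{-s}(\R)$ because $C^\infty_c\bigl((0,\infty)\bigr)$ is dense in the subspace of $H^{-s}(\R)$ consisting of functions supported in $[0,\infty)$ --- a density statement that again uses $-s<\frac12$, so the restriction is self-consistent. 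Second, in part (ii) you apply the product rule $(\chi_{(0,\infty)}\tilde h)'=\chi_{(0,\infty)}\tilde h'+\tilde h(0)\,\delta_0$ with $\tilde h'$ only in $H^{s-1}(\R)$, possibly of negative index, so $\chi_{(0,\infty)}\tilde h'$ is itself only defined through the duality of part (i); the identity should then be verified by approximating $\tilde h$ in $H^s(\R)$ by smooth functions, passing to the limit using the part-(i) multiplier bound at index $s-1\in(-\frac12,\frac12)$ for the first term and the continuity of the trace $h\mapsto h(0)$ for $s>\frac12$ for the second. With those two remarks added, your proof is complete and self-contained, which is arguably more than the paper provides.
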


Combining the solutions to \eqref{nonlinIVP} and \eqref{linIBVP} above, we arrive at the integral formulation of our IBVP \eqref{majdaIBVP}

\begin{equation} \label{weaksol}
\begin{split}
u(x,t) &= \eta(t)W^t \tu_0 + \eta(t)\int_0^t W^{t-t'} F(x,t') \,dt' + 2\Re W_1(f-p) (x,t) \\
v(x,t) &= \eta(t)W^t_\alpha \tv_0 + \eta(t)\int_0^t W_\alpha^{t-t'} G(x,t')\, dt' + 2\Re W_1(g-q) (\sqrt[3]{\alpha}\,x,t) 
\end{split}
\end{equation}

where:\\
\\
$
\begin{array}{lll}
F(u,v) = \eta(t/T) vv_x & \quad&  G(u,v) = \eta(t/T) (uv)_x\\
p = \eta(t) D_0(W^t \tu_0) + \eta(t) D_0 \left(\int_0^t W^{t-t'} F dt'\right) &\quad &
q = \eta(t) D_0(W^t_\alpha \tv_0) + \eta(t) D_0 \left(\int_0^t W_\alpha^{t-t'} G dt' \right).
\end{array}
$\\

We are finally ready to define a solution to \eqref{majdaIBVP} and state or main theorem. 

\begin{definition}\label{def:LWP} We say \eqref{majdaIBVP} is \textit{locally well-posed} in $H^s(\R^+)$ if, for any $u_0, v_0 \in H^s(\R^+)$ and any $f,g \in H^{\frac{s+1}{3}}(\R^+)$, with the additional compatibility conditions $u_0(0) = f(0)$ and $v_0(0) = g(0)$ for $s>\frac12$, the system \eqref{weaksol} has a unique solution $(u,v)$ with 
\begin{align*}
u &\in C^0_tH^s_x([0,T] \times) \cap C^0_xH^{\frac{s+1}{3}}_t(\R \times [0,T]) \cap X^{s,b}(\R \times [0,T]) \cap V^\gamma(\R \times [0,T]) \\
v &\in C^0_tH^s_x([0,T] \times) \cap C^0_xH^{\frac{s+1}{3}}_t(\R \times [0,T]) \cap X_\alpha^{s,b}(\R \times [0,T]) \cap V^\gamma(\R \times [0,T]) 
\end{align*}
for some $b<\frac12$ and $\gamma>\frac12$. Moreover, if $(u,v)$ and $(u', v')$ are two such solutions obtained with the same initial data $(u_0,v_0)$, then the two solutions are equal on $[0,\infty) \times [0,T]$. Further, if $u_{0,n} \to u_0$ and $v_{0,n} \to v_0$ in $H^s(\R^+)$ and $f_n \to f$ and $g_n \to g$ in $H^{\frac{s+1}{3}(\R^+)}$, then $u_n \to u$ and $v_n \to v$ in the spaces above. 
\end{definition}

\begin{theorem}\label{thrm:main}
Let $0<\alpha<1$ and $0<s<2$, $s \neq \frac12, \frac32$. Let $u_0, v_0$ be in $H^s(\R^+)$ and $f,g$ be in $H^{\frac{s+1}{3}}(\R^+)$. If $s>\frac12$, also let $u_0(0) = f(0)$ and $v_0(0) = g(0)$ if $s>\frac12$. Then there exists $T>0$, $b<\frac12$, $\gamma>\frac12$, and $u$, $v$ satisfying
\begin{align*}
u &\in C^0_tH^s_x([0,T] \times) \cap C^0_xH^{\frac{s+1}{3}}_t(\R \times [0,T]) \cap X^{s,b}(\R \times [0,T]) \cap V^\gamma(\R \times [0,T]) \\
v &\in C^0_tH^s_x([0,T] \times) \cap C^0_xH^{\frac{s+1}{3}}_t(\R \times [0,T]) \cap X_\alpha^{s,b}(\R \times [0,T]) \cap V^\gamma(\R \times [0,T]) 
\end{align*}
such that $(u,v)$ is a distributional solution to the initial-boundary value problem \eqref{majdaIBVP}. That is, $(u,v$) solves \eqref{weaksol} in the distributional sense. Moreover, if $\frac32<s<2$, then $(u,v)$ is a locally well-posed solution to \eqref{majdaIBVP}.
 \end{theorem}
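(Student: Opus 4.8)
The plan is to establish Theorem \ref{thrm:main} via a standard contraction
fixed-point argument in the solution space of Definition \ref{def:LWP},
treating the right-hand side of the integral formulation \eqref{weaksol} as a
map $\Phi = (\Phi_1, \Phi_2)$ on a closed ball in
$X^{s,b} \cap V^\gamma$ (paired with $X_\alpha^{s,b} \cap V^\gamma$). I would
first assemble the linear estimates from Section \ref{sec:apriori}: the
group-term bounds $\|\eta(t) W^t \tu_0\|_{X^{s,b}} \lsim \|\tu_0\|_{H^s(\R)}$
and the Duhamel bound
$\|\eta(t)\int_0^t W^{t-t'} F \, dt'\|_{X^{s,b}} \lsim \|F\|_{X^{s,b-1}}$,
together with their $V^\gamma$ analogues and the corresponding estimates for
the boundary operator $W_1$ acting on $f-p$ (and $W_{1,\alpha}$ on $g-q$).
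These reduce the fixed-point step to controlling the nonlinear forcing
$F = \eta(t/T) v v_x$ and $G = \eta(t/T)(uv)_x$ in $X^{s,b-1}$, which is
precisely where the bilinear estimates of Section \ref{sec: bilinear} enter.

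Next I would verify that $\Phi$ maps the ball to itself and is a contraction.
The key inputs are the bilinear estimates
$\|\partial_x(vw)\|_{X^{s,b-1}} \lsim \|v\|_{X_\alpha^{s,b}} \|w\|_{X_\alpha^{s,b}}$
and
$\|\partial_x(uv)\|_{X_\alpha^{s,b-1}} \lsim \|u\|_{X^{s,b}} \|v\|_{X_\alpha^{s,b}}$,
which hold for $0 < s < 2$ with some $b < \frac12$ (this is the regime dictated
by the half-line restriction $b < \frac12$ noted in the introduction). The
boundary corrections $p$ and $q$ must be handled carefully: one shows
$\|f - p\|_{H^{(s+1)/3}(\R^+)}$ is controlled by the data norms plus the
nonlinear contributions, using Lemma \ref{extensions} to pass between
$H^{(s+1)/3}(\R^+)$ and the zero-extension norm on $\R$, and using the $D_0$
trace estimates to bound the boundary values of the group and Duhamel terms by
their $X^{s,b}$ norms. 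A time-localization factor of $T^{\theta}$ for some
$\theta > 0$ (coming from the gap $b - (b-1) < 1$ combined with the cutoffs
$\eta$ and $\eta(t/T)$) provides the smallness needed to close the contraction
for $T$ sufficiently small depending on the data. The membership in
$C^0_t H^s_x \cap C^0_x H^{(s+1)/3}_t$ then follows from the embedding
properties of $X^{s,b}$ with $b$ adjusted, and continuous dependence is a
routine consequence of the contraction estimates applied to differences.

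For the final clause, that the solution is genuinely \emph{well-posed}
(including uniqueness) when $\frac32 < s < 2$, I would argue as indicated in
Section \ref{subsec:uniqueness}: in this higher-regularity range the solution
and its boundary trace are continuous, and one can show that the fixed point
does not depend on the choice of extension $\tu_0, \tv_0$ of the initial data.
Concretely, given two solutions arising from the same $(u_0, v_0)$ on $\R^+$
but possibly different extensions, one restricts to $x > 0$ and uses the
regularity $s > \frac32$ to make sense of the equation pointwise near the
boundary, so that the difference solves a homogeneous problem with zero
initial and boundary data and vanishes by the contraction uniqueness on a
short time interval, then by iteration on $[0,T]$.

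The main obstacle I expect is the bilinear estimates with $b < \frac12$. On
the full line one has the luxury $b > \frac12$, but the half-line method forces
$b < \frac12$, at which point the standard estimate \eqref{bil_kdv} fails due to
poor low-frequency control — exactly the reason the $V^\gamma$ space is
introduced. For the Majda--Biello system the difficulty is compounded because
$\alpha \neq 1$ destroys the clean algebraic identity \eqref{alg_kdv}: the
resonance set for the mixed interactions between the $\{\tau = \xi^3\}$ and
$\{\tau = \alpha \xi^3\}$ curves is governed by a more delicate relation, and
in the regime $0 < \alpha < 1$ the resonances are genuinely present. Thus the
heart of the proof is a careful frequency-interaction analysis separating
resonant from non-resonant regions, gaining the derivative in the nonlinearity
off the modulations $\langle \tau - \xi^3 \rangle$ and
$\langle \tau - \alpha \xi^3 \rangle$ where possible, and absorbing the
low-frequency output into the $V^\gamma$ norm where the $X^{s,b}$ bound alone
is insufficient.
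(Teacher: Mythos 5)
Your overall architecture --- contraction in $(X^{s,b}\cap V^\gamma)\times(X^{s,b}_\alpha\cap V^\gamma)$ with $b<\frac12$, smallness from time localization, resonance analysis with the $V^\gamma$ space absorbing low-frequency interactions --- matches the paper, but there are two concrete gaps. First, your treatment of the $C^0_x H^{\frac{s+1}{3}}_t$ membership and of the boundary corrections $p,q$ fails as stated. You invoke the Duhamel bound with source norm $X^{s,b-1}$; since $b<\frac12$ gives $b-1<-\frac12$, Lemma \ref{Duham} (which requires $0\le b_1<\frac12$, i.e.\ a source exponent strictly above $-\frac12$) does not apply, and the paper instead estimates $\|F\|_{X^{s,-b}}$ with $-b>-\frac12$, extracting the factor $T^{\frac12-b-}$ via Lemma \ref{extractT}. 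More importantly, for $b<\frac12$ there is no embedding of $X^{s,b}$ into $C^0_xH^{\frac{s+1}{3}}_t$, so the time trace of the Duhamel term cannot ``follow from embedding properties of $X^{s,b}$ with $b$ adjusted.'' This is exactly the content of Proposition \ref{Duham_time_trace}, and for $s>\frac12$ that proposition produces the additional norm $\|F\|_{X^{\frac12+,\frac{s-2}{3}}}$, which in turn forces the second pair of bilinear estimates \eqref{bil_3}--\eqref{bil_4} in $X^{\frac12+,\frac{2s-1}{6}-b}$ (with exponents arranged so that Lemma \ref{extractT} still yields a positive power of $T$). Your proposal only budgets for the standard pair \eqref{bil_1}--\eqref{bil_2}, so the contraction cannot be closed for $\frac12<s<2$ as you have set it up.

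Second, your uniqueness argument for $\frac32<s<2$ does not go through. Two solutions built from different extensions $\tu_0,\tv_0$ are fixed points of \emph{different} integral maps, so ``contraction uniqueness'' says nothing about whether their restrictions to $x>0$ agree: the difference of the two solutions is not a fixed point of either contraction, and the homogeneous problem it solves on $\R^+$ still needs a uniqueness mechanism. The paper supplies this with an energy estimate (Lemma \ref{energy_bound}): one computes $\partial_t\bigl(\|u-u'\|_{L^2(\R^+)}^2+\|v-v'\|_{L^2(\R^+)}^2\bigr)$, integrates by parts (the boundary terms vanish because both solutions carry the same boundary data $f,g$), controls terms like $\|(v+v')_x\|_{L^\infty}$ by the Sobolev embedding available precisely when $s>\frac32$, and closes with Gronwall; since all norms are taken on $\R^+$, the conclusion is manifestly independent of the choice of extension. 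You would need to reproduce this energy/Gronwall step, or substitute some other a priori estimate posed on $\R^+$ alone --- your sketch as written stops short of that.
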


The lower regularity bound $s>0$ is almost-sharp, as Oh showed in \cite{oh} that the Majda-Biello problem on $\R$ is ill-posed for $s<0$ because the data-to-solution map is not $C^2$. The maximum regularity imposed by the Laplace transform method would be $s<\frac72$ so that $\frac{s+1}{3}<\frac32$. 

As in \cite{kenig} and \cite{holmer}, we do not establish uniqueness for low regularity solutions. If we had smoothing in our bilinear estimates in Section \ref{sec: bilinear}, we could prove uniqueness by following the approach from \cite{tz_cubicnls} or \cite{tz_good}. However, the local theory for the KdV equation on $\R$ does not appear to exhibit smoothing in the bilinear estimate (see \cite[Proposition 1]{colliander_no_smooth}, for example), so we do not expect smoothing for the Majda-Biello system on the half line either. Perhaps a one-sided smoothing estimate could be used to establish uniqueness. We plan to investigate this in future work. 

\section{Boundary Term} \label{sec:boundary}
Here we give a (formal) derivation of the boundary term solution of \eqref{linIBVP} using the Laplace transform. As the equations are decoupled, we solve only the first equation here. Our goal is an explicit solution of 
\begin{equation} \label{Linear_IBVP}
\begin{split}
&u_t + u_{xxx} = 0 \qquad x \in \R^+, \, t \in \R^+ \\
&u(x,0) \equiv 0,  \quad u(0,t) = h(t)
\end{split}
\end{equation}
for $h \in H^{\frac{s+1}{3}}(\R^+)$.  We'll denote this solution by $W_0 h (x,t)$.\\
\\
Taking the Laplace transform in $t$  gives us the ordinary IVP (in $x$)
\begin{equation}
\begin{split}
&\tilde{u}_{xxx} + \lambda \tilde{u} = 0 \\
&\tilde{u}(0,\lambda) = \tilde{h}(\lambda)
\end{split}
\end{equation}
which is easily solved to give 
$$\tilde{u}(x,\lambda) = e^{r(\lambda)x}\tilde{h}(\lambda) \qquad \text{where } r^3(\lambda) + \lambda = 0.$$
\noindent Now inverting with the Mellin transform allows us to write
\begin{align*}
W_0 h (x,t) &= \chi_{[0,\infty)}(t) \, u(x,t) = \lim_{\gamma \to 0} \frac{1}{2 \pi i} \int \limits_{\gamma-i\infty}^{\gamma + i\infty} e^{\lambda t} \tilde{u}(x,\lambda) d\lambda \\
 &= \frac{1}{2\pi} \int \limits_0^\infty e^{i\beta^3 t} \tilde{u}(x,i\beta^3)3\beta^2 d\beta + \frac{1}{2\pi} \int \limits_0^\infty e^{-i\beta^3 t} \tilde{u}(x,-i\beta^3)3\beta^2 d\beta \\
 &:= W_1h + W_2h
\end{align*}

\noindent For $W_1h$, we have $r^3 + i\beta^3 = 0$. The only root with $\text{Re}(r)<0$ is $\beta\left[-\frac{\sqrt{3}}{2} - \frac{1}{2}i\right]$, so 
\begin{align*}
W_1 h(x,t) &= \frac{3}{2\pi} \int \limits_0^\infty e^{\beta [-\frac{\sqrt{3}}{2} - \frac{1}{2}i]x} e^{i\beta^3 t} \beta^2 \tilde{h}(i\beta^3) d\beta \\
 &= \frac{3}{2\pi} \int \limits_0^\infty e^{\beta [-\frac{\sqrt{3}}{2} - \frac{1}{2}i]x} e^{i\beta^3 t} \beta^2 \widehat{h}(\beta^3) d\beta
\end{align*}
where we've abused notation slightly in writing $\widehat{h}$ to represent $\mathcal{F}_t [\chi_{[0,\infty)}h ]$.\\

To extend $W_1h$ to all $x$, we introduce a smooth function $\rho$ supported in $(-2,\infty)$ with $\rho \equiv 1$ on $[0,\infty)$
$$W_1h (x,t) = \frac{3}{2\pi} \int \limits_0^\infty e^{\beta [-\frac{\sqrt{3}}{2} - \frac{1}{2}i]x} e^{i\beta^3 t} \rho(\beta x) \beta^2 \widehat{h}(\beta^3) d\beta.$$

A similar calculation shows
$$W_2h (x,t) = \overline{W_1h} (x,t)  = \frac{3}{2\pi} \int \limits_0^\infty e^{\beta [-\frac{\sqrt{3}}{2}+\frac{1}{2}i]x} e^{-i\beta^3t} \rho(\beta x) \beta^2 \widehat{h}(-\beta^3) d\beta.$$

\section{A priori estimates} \label{sec:apriori}

In this section we verify that linear terms in \eqref{weaksol} remain in the Banach space from definition \ref{def:LWP}. Several of the estimates in the section are standard properties of $X^{s,b}$ spaces, and some are well known linear estimates from the local theory for the KdV equation on $\R$. 

We begin with the linear solution $W^t u_0$. Recall that $\eta \in C^\infty_c$ is a smooth cutoff equal to $1$ on $[-1,1]$. Because $W^t = e^{-t\del_{xxx}}$ is unitary on $H^s$, we know
\begin{equation} \label{W_unitary}
\|\eta(t) W^t u_0\|_{L^\infty_t H^s_x} \lsim \|u_0\|_{H^s_x(\R)}.
\end{equation}
Then $\eta(t) W^t u_0 \in C^0_tH^s_x(\R)$ by the dominated convergence theorem.

Next, we have the well known Kato smoothing estimate \cite[Lemma 4.1]{kenig}.
\begin{equation} \label{Kato_smooth}
\|\partial_x W^t u_0 \|_{L^\infty_x L^2_t} \lsim \|u_0\|_{L^2(\R)}.
\end{equation}

To take full advantage of the interplay between space and time derivative, we have the following Kato type estimate. Note that this explains our choice to take the boundary data to be in $H^{\frac{s+1}{3}}(\R^+)$.

\begin{lemma} \cite[Lemma 4.1]{kenig}\label{lem_Kato}
For ($s\geq-1$)
$$\| \eta(t) W^t u_0 \|_{L_{x}^{\infty} H_{t}^{\frac{s+1}{3}}} \lsim \| u_0 \|_{H^s(\R)}.$$
\end{lemma}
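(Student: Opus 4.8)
The plan is to pass to the time Fourier side and reduce to Plancherel after the change of variables $\tau=\xi^3$, separating the frequency regions $|\xi|\le 1$ and $|\xi|>1$ because this substitution degenerates at $\xi=0$. Throughout set $b=\frac{s+1}{3}$, and split $u_0$ using the Littlewood--Paley-type cutoffs $\chi_{|\xi|\le 1}$ and $\chi_{|\xi|>1}$. On the low-frequency piece $\wh{u_0}\,\chi_{|\xi|\le 1}$ one has $\la\xi\ra^s\sim 1$ and $|\xi^3|\le 1$, so I would argue directly: by Cauchy--Schwarz the function $G(x,t)=\int_{|\xi|\le1}e^{ix\xi}e^{it\xi^3}\wh{u_0}(\xi)\,d\xi$ and each time derivative $\del_t^k G=\int_{|\xi|\le1}(i\xi^3)^k e^{ix\xi}e^{it\xi^3}\wh{u_0}(\xi)\,d\xi$ are bounded in $L^\infty_t$, uniformly in $x$, by $\norm{\wh{u_0}\,\chi_{|\xi|\le1}}_{L^2}\lsim\norm{u_0}_{H^s}$. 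Since $\eta$ is compactly supported, bounding the $H^b_t$ norm by a finite sum of $L^2_t$ norms of $\del_t^k(\eta G)$ (which suffices as $b$ is in a bounded range) then controls the low-frequency contribution uniformly in $x$.

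The main part is the high-frequency piece. For $\xi>1$ I would compute the time Fourier transform of $W^t u_0$: formally $\mathcal{F}_t[e^{it\xi^3}]=2\pi\delta(\tau-\xi^3)$, and with the Jacobian $\frac{d\xi}{d\tau}=\frac{1}{3}\tau^{-2/3}$ this gives
\[
\mathcal{F}_t\big[W^t u_0\big](x,\tau)=2\pi\,e^{ix\tau^{1/3}}\,\wh{u_0}(\tau^{1/3})\,\tfrac{1}{3}\tau^{-2/3}\chi_{\tau>1}
\]
plus the symmetric $\xi<-1$ contribution. Multiplication by the temporal cutoff $\eta(t)$ turns this into convolution with $\wh{\eta}$ on the Fourier side, which breaks exact Plancherel. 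I would handle this with the Peetre-type inequality $\la\tau\ra^{b}\lsim\la\tau-\sigma\ra^{b}\la\sigma\ra^{b}$ (valid since $b\ge0$): the factor $\la\tau-\sigma\ra^{b}$ is absorbed by the rapid decay of $\wh{\eta}$, so that $\la\cdot\ra^{b}\wh{\eta}\in L^1$, and then Young's inequality $L^1\ast L^2\to L^2$ reduces the estimate to bounding
\[
\Big\|\,\la\tau\ra^{b}\,e^{ix\tau^{1/3}}\wh{u_0}(\tau^{1/3})\,\tau^{-2/3}\chi_{\tau>1}\,\Big\|_{L^2_\tau}.
\]

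Finally I would undo the substitution $\tau=\xi^3$: for $\xi>1$ the weight $\la\tau\ra^{2b}\sim\xi^{6b}=\xi^{2(s+1)}$, while the measure and the $\tau^{-4/3}$ factor produce $\xi^{2}\cdot\xi^{-4}=\xi^{-2}$, so the integrand collapses to $\xi^{2s}|\wh{u_0}(\xi)|^2$ and the bound $\lsim\norm{u_0}_{H^s(\R)}^2$ follows; uniformity in $x$ is immediate because $|e^{ix\tau^{1/3}}|=1$. The heart of the matter — and the point I would emphasize — is that the exponent $b=\frac{s+1}{3}$ is tuned exactly so that cubic dispersion trades one unit of the temporal weight for $\la\xi\ra^{s+1}$, with the Jacobian supplying the extra derivative of Kato smoothing. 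The only genuinely delicate technical points are the degeneracy of the substitution at $\xi=0$, which forces the separate low-frequency treatment, and the presence of $\eta$, which must be absorbed through the Schwartz decay of $\wh{\eta}$ rather than a direct application of Plancherel.
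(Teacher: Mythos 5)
Your proof is correct, and it is essentially the standard argument for this estimate: the paper does not prove the lemma itself but quotes it from Colliander--Kenig \cite[Lemma 4.1]{kenig}, whose proof proceeds exactly as you do --- splitting into $|\xi|\leq 1$ and $|\xi|>1$, changing variables $\tau=\xi^3$ on the high-frequency piece so that the weight $\la\tau\ra^{\frac{s+1}{3}}$ converts to $\la\xi\ra^{s+1}$ against the Jacobian, and absorbing the temporal cutoff via the decay of $\widehat{\eta}$ together with $\la\tau\ra^b\lsim\la\tau-\sigma\ra^b\la\sigma\ra^b$ and Young's inequality. Your accounting also correctly identifies where the hypothesis $s\geq-1$ enters (namely $b=\frac{s+1}{3}\geq 0$ for the Peetre step), so nothing is missing.
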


Straightforward estimates show that the linear solution lies in $X^{s,b} \cap V^\gamma$. Recall that $V^\gamma$ is the low-frequency adjustment introduced in \cite{kenig}.

\begin{lemma} \cite[Lemma 5.2]{kenig} \label{lin_V} For any $\gamma \in \R$,
$$\|\eta(t) W^t u_0\|_{V^\gamma} \lsim \|u_0\|_{L^2 (\R)}.$$
\end{lemma}

\begin{lemma}\cite[Lemma 2.8]{tao_nonlinear}\label{xHs_embed}
$$\| \eta(t) W^t u_0 \|_{X^{s,b}} \lsim \| u_0 \|_{H_x^s(\R)} \qquad \text{for any }s,b \in \R.$$
\end{lemma}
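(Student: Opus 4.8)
The plan is to reduce the estimate to an explicit computation of the space-time Fourier transform of $\eta(t) W^t u_0$, after which the bound follows by a change of variables. Since $W^t u_0$ has spatial Fourier transform $e^{it\xi^3}\wh{u_0}(\xi)$, multiplying by the temporal cutoff $\eta(t)$ and then transforming in $t$ (with the conventions fixed in the Notation subsection) yields the factorized expression
$$\mathcal{F}_{x,t}[\eta(t) W^t u_0](\xi,\tau) = \wh{u_0}(\xi)\, \wh{\eta}(\tau - \xi^3).$$
The crucial observation is that the modulation weight $\la \tau - \xi^3\ra^b$ appearing in the $X^{s,b}$ norm is tuned exactly to the curve $\{\tau = \xi^3\}$ on which the free evolution concentrates, so once this formula is inserted the $\tau$-dependence decouples completely from the $\xi$-dependence.

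Concretely, I would substitute the above into the definition of the $X^{s,b}$ norm and, for each fixed $\xi$, perform the substitution $\mu = \tau - \xi^3$. This gives
$$\|\eta(t) W^t u_0\|_{X^{s,b}}^2 = \left(\int \la\mu\ra^{2b}\, |\wh{\eta}(\mu)|^2 \, d\mu\right) \int \la\xi\ra^{2s}\, |\wh{u_0}(\xi)|^2 \, d\xi = \|\eta\|_{H^b}^2 \, \|u_0\|_{H^s(\R)}^2.$$
The first factor is the only place where $b$ enters, and it is finite for \emph{every} $b \in \R$: since $\eta \in C^\infty_c(\R) \subset \mathcal{S}(\R)$, its transform $\wh{\eta}$ is Schwartz and therefore decays faster than any polynomial, so $\la\mu\ra^{2b}|\wh{\eta}(\mu)|^2$ is integrable regardless of the sign or size of $b$. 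Taking square roots produces the claim with implicit constant $\|\eta\|_{H^b}$, and the fact that the spatial weight $\la\xi\ra^s$ passes through untouched explains why the estimate holds for all $s \in \R$ simultaneously.

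I do not anticipate any genuine obstacle; the estimate is essentially a change of variables combined with the rapid decay of $\wh{\eta}$. The only point requiring mild care is the bookkeeping of the Fourier normalization (in particular the factors of $2\pi$ from the inverse transforms), which affects the value of the constant but not the validity of the bound. It is worth emphasizing that the decoupling of the $\tau$-integral is exactly the mechanism that permits arbitrary $b$, in sharp contrast to the bilinear estimates of Section \ref{sec: bilinear}, where the sign of $b-\tfrac12$ is decisive.
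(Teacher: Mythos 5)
Your proof is correct, and it is precisely the standard argument behind the cited result \cite[Lemma 2.8]{tao_nonlinear}, which the paper invokes without reproducing: the space-time Fourier transform of $\eta(t)W^t u_0$ factors as $\wh{u_0}(\xi)\,\wh{\eta}(\tau-\xi^3)$, so after the substitution $\mu = \tau - \xi^3$ the $X^{s,b}$ norm splits exactly into $\|\eta\|_{H^b}\,\|u_0\|_{H^s}$, finite for all $b$ by the rapid decay of $\wh{\eta}$. No gaps; your parenthetical care about Fourier normalization constants is the only bookkeeping needed.
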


Although we must take $b<\frac12$ in Theorem \ref{thrm:main}, we will also make use of the following standard result \cite[Corollary 2.10]{tao_nonlinear}.
\begin{equation} \label{Xsb_embed}
\text{For any }s \in \R \text{ and } b>\frac12, \text{ we have }X^{s,b} \subset C^0_tH^s_x.
\end{equation}

We now proceed with the estimates for the nonlinear Duhamel term $\eta(t)\int_0^t W^{t-t'} F(x,t') dt' $, and the boundary term $W_1(h)$ defined in \eqref{boundary}. These estimates follow the general approach of Bona-Sun-Zhang in \cite{bona} and can be adapted to many initial-boundary value problems. To complete the argument, we will also need the bilinear estimates specific to the Majda-Biello IBVP \eqref{majdaIBVP}, which we establish in section \ref{sec: bilinear}.

\begin{lemma}\cite[Lemma 3.3]{Kenig93}\label{Duham} For $s \in \R$, $0 \leq b_1 < \frac12$, and $0 \leq b_2 \leq 1-b_1$\\
$$\|\eta(t) \int_0^t W^{t-t'} F dt' \|_{X^{s,b_2}} \lsim \|F\|_{X^{s,-b_1}}.$$
\end{lemma}
\noindent Remark: We will take $b_2 > \frac12$ in lemma \ref{Duham}, so this will put the Duhamel term in $C^0_tH^s_x$ as well by the embedding $X^{s,b} \subset C^0_tH^s_x$ for $b>\frac12$. 

\begin{lemma} \cite[Lemma 2.11]{tao_nonlinear} \label{extractT} For $T<1$, and $-\frac12<b_1<b_2<\frac12$, we have
$$\|\eta(t/T)F\|_{X^{s,b_1}} \lsim T^{b_2-b_1}\|F\|_{X^{s,b_2}}.$$
\end{lemma}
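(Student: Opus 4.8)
The plan is to strip away the spatial variable and the dispersive weight, reduce to a one–dimensional statement in time, and then prove that statement by pairing a frequency–space argument at high frequencies with a physical–space argument at low frequencies. Write $\eta_T(t):=\eta(t/T)$. First I would conjugate by the free propagator: since $W^{-t}$ is, at each fixed $t$, a spatial Fourier multiplier, it commutes with multiplication by the time function $\eta_T$, and $u\mapsto W^{-t}u$ is an isometry of $X^{s,b}$ onto the mixed space $H^s_x H^b_t$. The weight $\langle\xi\rangle^s$ is untouched by $\eta_T$, so by Fubini the spatial frequency is a mere spectator and the lemma reduces to the temporal claim
\[
\|\eta_T w\|_{H^{b_1}_t(\R)} \lsim T^{b_2-b_1}\|w\|_{H^{b_2}_t(\R)}, \qquad -\tfrac12<b_1<b_2<\tfrac12,\ \ T\le 1,
\]
with constant independent of $w$.

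The engine is the single gain estimate $\|\eta_T w\|_{L^2_t}\lsim T^{\beta}\|w\|_{H^{\beta}_t}$ for $0\le\beta<\tfrac12$, which I would prove by splitting $w$ at temporal frequency $T^{-1}$. On the high piece $\langle\sigma\rangle^{-\beta}\lsim T^{\beta}$, so multiplying by the bounded function $\eta_T$ costs nothing. For the low piece $w_{\mathrm{lo}}$ I would use that $\eta_T$ is supported in an interval of length $O(T)$, whence $\|\eta_T w_{\mathrm{lo}}\|_{L^2}\lsim T^{1/2}\|w_{\mathrm{lo}}\|_{L^\infty}$, and then $\|w_{\mathrm{lo}}\|_{L^\infty}\lsim \|\widehat{w_{\mathrm{lo}}}\|_{L^1}\lsim T^{\beta-1/2}\|w\|_{H^\beta}$ by Cauchy–Schwarz, where the integrability of $\langle\sigma\rangle^{-2\beta}$ over $|\sigma|\lesssim T^{-1}$ is exactly what forces $\beta<\tfrac12$. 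By duality (with $\eta$ real) this gives the companion bound $\|\eta_T w\|_{H^{-\beta}}\lsim T^{\beta}\|w\|_{L^2}$ over the same range.

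With these two facts I would assemble the three regimes. For $b_1\le 0\le b_2$ I would factor multiplication as $M_{\eta_T}=M_{\eta_T}M_{\tilde\eta_T}$, where $\tilde\eta\equiv1$ on the support of $\eta$, and pivot through $L^2$: applying $M_{\tilde\eta_T}\colon H^{b_2}\to L^2$ gains $T^{b_2}$ by the gain estimate, applying $M_{\eta_T}\colon L^2\to H^{b_1}$ gains $T^{-b_1}$ by its dual, and the exponents add to $T^{b_2-b_1}$. For $0<b_1<b_2$ I would interpolate the gain estimate (as $H^{b_2}\to L^2$, norm $T^{b_2}$) against the uniform bound $\|\eta_T w\|_{H^{b_2}}\lsim\|w\|_{H^{b_2}}$ (norm $1$); complex interpolation of the target between $H^0$ and $H^{b_2}$ lands at $H^{b_1}$ with $b_1=\theta b_2$ and norm $T^{b_2(1-\theta)}=T^{b_2-b_1}$. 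The case $b_1<b_2\le0$ then follows from this one by duality.

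The remaining ingredient, the uniform boundedness $\|\eta_T w\|_{H^{b}}\lsim\|w\|_{H^b}$ for $0\le b<\tfrac12$, is the step I expect to be the main obstacle, since naively commuting the cutoff past $\langle\partial_t\rangle^{b}$ produces a factor $T^{-b}$ that blows up as $T\to0$; indeed a pure Schur/frequency-side argument recovers only half the desired power in the straddling regime, so this estimate cannot be bypassed. I would prove it with the Gagliardo seminorm, writing the divided difference of $\eta_T w$ as $\eta_T(t)\bigl(w(t)-w(t')\bigr)+w(t')\bigl(\eta_T(t)-\eta_T(t')\bigr)$. The first term is controlled by $[w]_{H^b}$, while the second, localized to $|t'|\lesssim T$ where the divided difference of the cutoff generates the dangerous $T^{-2b}$, is absorbed by the restriction bound $\int_{|t'|\lesssim T}|w|^2\lsim T^{2b}\|w\|_{H^b}^2$. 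This last inequality is precisely the subcritical Sobolev embedding $H^b\hookrightarrow L^{2/(1-2b)}$, and the recurring reason the hypothesis $|b_i|<\tfrac12$ cannot be relaxed is that both here and in the gain estimate the $T$-gain rests on embedding into $L^q$ with finite $q$, valid exactly below the $\tfrac12$ threshold.
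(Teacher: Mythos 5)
Your proposal is correct, but note that the paper does not prove this lemma at all: it is quoted verbatim from Tao's book (Lemma 2.11 of \cite{tao_nonlinear}), so there is no in-paper argument to match against. What you have written is essentially a self-contained reconstruction of the standard proof from that reference: conjugation by the free group to reduce $X^{s,b}$ to the temporal space $H^b_t$ at each fixed $\xi$, the endpoint gain $\|\eta(t/T)w\|_{L^2}\lsim T^{\beta}\|w\|_{H^{\beta}}$ for $0\le\beta<\frac12$ via frequency splitting at $T^{-1}$, its dual, and then duality plus interpolation to sweep out the full range $-\frac12<b_1<b_2<\frac12$; all of these steps check out, including the exponent bookkeeping $T^{b_2(1-\theta)}=T^{b_2-b_1}$ with $\theta=b_1/b_2$ and the correct identification of where $|b_i|<\frac12$ enters (integrability of $\la\sigma\ra^{-2\beta}$ on $|\sigma|\lsim T^{-1}$, equivalently the subcritical embedding $H^{b}\hookrightarrow L^{2/(1-2b)}$). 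Your treatment of the genuinely delicate ingredient --- the $T$-uniform bound $\|\eta(t/T)w\|_{H^{b}}\lsim\|w\|_{H^{b}}$ for $0\le b<\frac12$, where the naive commutation indeed loses $T^{-b}$ --- via the Gagliardo seminorm is a legitimate physical-space alternative to a Fourier-side argument, and it is the one place your sketch is slightly thin: in the cross term $\iint |w(t')|^2\,|\eta_T(t)-\eta_T(t')|^2\,|t-t'|^{-1-2b}\,dt\,dt'$ the region $|t'|\gg T$ does not vanish (there $\eta_T(t')=0$ but $\eta_T(t)$ need not be), and the inner integral contributes $\sim T|t'|^{-1-2b}$; you then need the restriction bound $\int_{|t'|\lsim 2^kT}|w|^2\lsim (2^kT)^{2b}\|w\|_{H^b}^2$ at all dyadic scales $2^kT\lsim 1$, which sums as $\sum_k 2^{-k}$ --- routine, but it should be said rather than folding everything into $|t'|\lsim T$. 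With that one elision repaired, your argument is complete and could serve as an appendix proof; its advantage over merely citing \cite{tao_nonlinear} is that it isolates exactly which estimates force the threshold $|b|<\frac12$ that the paper's half-line framework lives under.
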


We need the following proposition to ensure that the Duhamel term stays in $C^0_xH^{\frac{s+1}{3}}_t$. Note that Proposition \ref{Duham_time_trace} is the reason we must work with $b<\frac12$ on the half-line. 

\begin{prop}[Duhamel Time Trace] \label{Duham_time_trace}
For $b<\frac{1}{2}$,
$$\norm{\eta(t) \int_0^t W^{t-t'} F dt'}_{C^0_x H^{\frac{s+1}{3}}_t} \lsim 
\begin{cases}
\norm{F}_{X^{s,-b}} & 0 \leq s \leq \frac{1}{2}\\
\displaystyle \norm{F}_{X^{s,-b}} + \|F\|_{X^{\frac12+, \frac{s-2}{3}}} & s>\frac12.
\end{cases}$$
\end{prop}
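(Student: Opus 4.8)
The plan is to follow the Bona--Sun--Zhang reduction: decompose the Duhamel term into free Airy evolutions, which are controlled by the Kato-type estimate of Lemma~\ref{lem_Kato}, plus a single genuinely inhomogeneous piece carrying the real difficulty. Set $\beta=\frac{s+1}{3}$. Taking the spatial Fourier transform and writing $\widehat F(\xi,\tau)$ for the space-time transform, one has
\[
\int_0^t W^{t-t'}F\,dt' = \frac{1}{2\pi}\int e^{ix\xi}\int \frac{e^{it\tau}-e^{it\xi^3}}{i(\tau-\xi^3)}\,\widehat F(\xi,\tau)\,d\tau\,d\xi .
\]
I would insert a cutoff $\phi$, smooth, equal to $1$ near $0$ and supported in $\{|\cdot|\le 2\}$, splitting the $\tau$-integral at $|\tau-\xi^3|\sim 1$. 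On the low-modulation part I Taylor expand $e^{it(\tau-\xi^3)}-1$ and recognize the $k$-th term as $c_k\,\eta(t)t^k W^t g_k$ with $\widehat{g_k}(\xi)=\frac{1}{2\pi}\int (\tau-\xi^3)^{k-1}\phi(\tau-\xi^3)\widehat F\,d\tau$; since $\eta(t)t^k\in C_c^\infty$, Lemma~\ref{lem_Kato} applies term by term, $\|g_k\|_{H^s}\lsim 2^{k}\|F\|_{X^{s,-b}}$ by Cauchy--Schwarz on the bounded modulation region, and the $k!$ in the Taylor coefficients gives summability. The high-modulation part splits as $\frac{e^{it\tau}(1-\phi)}{i(\tau-\xi^3)}-\frac{e^{it\xi^3}(1-\phi)}{i(\tau-\xi^3)}$; the second summand is a free evolution $\eta(t)W^t g$, and Lemma~\ref{lem_Kato} together with $\|g\|_{H^s}\lsim\|F\|_{X^{s,-b}}$ --- valid because $b<\frac12$ makes $\int\langle\tau-\xi^3\rangle^{-2+2b}\,d\tau$ finite --- bounds it by $\|F\|_{X^{s,-b}}$ for every $s$ in range.

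This isolates the forced term $\eta(t)G$, whose space-time transform is $\widehat G(\xi,\tau)=\frac{1-\phi(\tau-\xi^3)}{i(\tau-\xi^3)}\widehat F(\xi,\tau)$. Multiplication by $\eta$ is bounded on $H^\beta_t$, so it suffices to estimate $\sup_x\|G(x,\cdot)\|_{H^\beta_t}$; continuity in $x$, hence the $C^0_x$ conclusion, will follow from dominated convergence. Estimating the $x$-integral crudely by its $L^1_\xi$ norm gives
\[
\sup_x\big|\mathcal{F}_t[G](x,\tau)\big|\lsim \int \frac{|\widehat F(\xi,\tau)|}{\langle\tau-\xi^3\rangle}\,d\xi =: \Phi(\tau),\qquad \sup_x\|G(x,\cdot)\|_{H^\beta_t}\le \big\|\langle\tau\rangle^{\beta}\Phi\big\|_{L^2_\tau}.
\]
I then split $\Phi$ according to $|\xi|>\tfrac12|\tau|^{1/3}$ and $|\xi|\le\tfrac12|\tau|^{1/3}$: the first region contains the stationary point $\xi\approx\tau^{1/3}$, while on the second $\langle\tau-\xi^3\rangle\sim\langle\tau\rangle$.

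On the stationary region, a Cauchy--Schwarz in $\xi$ against the $X^{s,-b}$ weight reduces matters to
\[
\sup_\tau\ \langle\tau\rangle^{2\beta}\int_{|\xi|>\frac12|\tau|^{1/3}} \frac{d\xi}{\langle\xi\rangle^{2s}\langle\tau-\xi^3\rangle^{2-2b}} < \infty .
\]
The substitution $u=\xi^3-\tau$, for which $d\xi\sim\tau^{-2/3}\,du$ near $\xi\sim\tau^{1/3}$, produces exactly $\langle\tau\rangle^{-2s/3-2/3}=\langle\tau\rangle^{-2\beta}$; the far-field contributions $|\xi|\gg|\tau|^{1/3}$ are smaller, of size $\langle\tau\rangle^{2b-1}\lsim1$ after weighting. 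The convergence of $\int\langle u\rangle^{-2+2b}\,du$ is precisely where $b<\frac12$ enters, and is the reason the proposition (and the whole half-line scheme) must be run with $b<\frac12$. Thus the region $|\xi|>\tfrac12|\tau|^{1/3}$ is controlled by $\|F\|_{X^{s,-b}}$ for all admissible $s$.

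The decisive region is the off-diagonal piece $|\xi|\le\tfrac12|\tau|^{1/3}$, where $\langle\tau-\xi^3\rangle\sim\langle\tau\rangle$ and no stationary gain is available. For $0\le s\le\frac12$ the same $X^{s,-b}$ pairing still closes, since $\langle\tau\rangle^{2\beta-(2-2b)}\int_{|\xi|\le\frac12|\tau|^{1/3}}\langle\xi\rangle^{-2s}\,d\xi\lsim\langle\tau\rangle^{2b-1}\lsim1$. For $s>\frac12$ this fails --- the $\xi$-integral saturates at $O(1)$ and leaves a positive power of $\langle\tau\rangle$ --- and repairing it is the main obstacle. The fix is to pair instead against the second norm, decomposing
\[
\frac{|\widehat F|}{\langle\tau-\xi^3\rangle} = \Big(\langle\xi\rangle^{-(\frac12+)}\langle\tau-\xi^3\rangle^{-\beta}\Big)\cdot\Big(\langle\xi\rangle^{\frac12+}\langle\tau-\xi^3\rangle^{\frac{s-2}{3}}|\widehat F|\Big),
\]
where the identity $-1-\frac{s-2}{3}=-\beta$ is exactly what turns the modulation exponent of the first factor into $-\beta$. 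Cauchy--Schwarz in $\xi$, using $\int\langle\xi\rangle^{-1-}\,d\xi<\infty$ and $\langle\tau-\xi^3\rangle\sim\langle\tau\rangle$ on this region, bounds the first factor in $L^2_\xi$ by $\langle\tau\rangle^{-\beta}$, so that $\langle\tau\rangle^{\beta}\Phi_{\mathrm{low}}(\tau)\lsim\|\langle\xi\rangle^{\frac12+}\langle\tau-\xi^3\rangle^{\frac{s-2}{3}}\widehat F(\cdot,\tau)\|_{L^2_\xi}$ and hence $\|\langle\tau\rangle^{\beta}\Phi_{\mathrm{low}}\|_{L^2_\tau}\lsim\|F\|_{X^{\frac12+,\frac{s-2}{3}}}$. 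Collecting the three contributions yields the stated dichotomy. I expect the exponent bookkeeping in this last off-diagonal estimate for $s>\frac12$ --- arranging that both Cauchy--Schwarz factors are simultaneously integrable --- to be the crux of the argument.
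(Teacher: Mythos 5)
Your proposal is correct and follows essentially the same route as the paper's proof: the identical Duhamel kernel formula, the split at modulation $|\tau-\xi^3|\sim 1$ with Taylor expansion and factorial summability on the low-modulation piece, and, for the genuinely forced term, the same Cauchy--Schwarz scheme in which the identity $\frac{s+1}{3}-1=\frac{s-2}{3}$ produces the extra norm $\|F\|_{X^{\frac12+,\frac{s-2}{3}}}$ exactly when $s>\frac12$, with $b<\frac12$ entering through the convergence of $\int\langle\tau-\xi^3\rangle^{-2+2b}$. The only differences are organizational: you route the free pieces (the Taylor terms and the $e^{it\xi^3}$ summand) through Lemma~\ref{lem_Kato} rather than estimating them directly as the paper does, and your explicit region split $|\xi|\gtrless\frac12|\tau|^{1/3}$ is the paper's pointwise inequality $\langle\tau\rangle^{\frac{s+1}{3}}\lsim\langle\tau-\xi^3\rangle^{\frac{s+1}{3}}+|\xi|^{s+1}$ in disguise.
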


\begin{proof}
Suppose first that $0 \leq s \leq \frac12$. We begin by rewriting $\int_0^t W^{t-t'} F dt'$
\begin{align*}
\int_0^t W^{t-t'} F dt' (x,t) &= \int_0^t \int e^{ix\xi}e^{i(t-t')\xi^3}\widehat{F}(\xi,t') d\xi dt' 
=\int_{\xi} \int_0^t e^{ix\xi}e^{i(t-t')\xi^3} \left( \int_{\tau} e^{it'\tau} \widehat{F}(\xi,\tau) d\tau \right) dt' d\xi \\
&= \iint e^{ix\xi}e^{it\xi^3} \widehat{F}(\xi,\tau) \left( \int_0^t e^{it'(\tau-\xi^3)} dt' \right) d\xi d\tau 
= \iint e^{ix\xi} \frac{e^{it\tau}-e^{it\xi^3}}{i(\tau-\xi^3)} \widehat{F}(\xi,\tau) d\xi d\tau.
\end{align*}

We now proceed to bound 
$$\norm{\eta \iint e^{ix\xi} \frac{e^{it\tau}-e^{it\xi^3}}{i(\tau-\xi^3)} \widehat{F}(\xi,\tau) d\xi d\tau }_{H^{\frac{s+1}{3}}_t}.$$
We first consider the case $0 \leq s \leq \frac{1}{2}$, where we'll make repeated use of the inequality 
\begin{equation} \label{Sob_product}
\norm{uv}_{H^s} \lsim \norm{u}_{H^1} \norm{v}_{H^s}
\end{equation}
which follows trivially from Lemma \ref{A.1}. We treat the cases where $|\tau - \xi^3| \leq 1$ and $|\tau -\xi^3|>1$ separately.\\ 
\\
\emph{Case 1)} $|\tau - \xi^3| \leq 1$ \\

Taylor expanding the exponentials gives 
$$ \frac{e^{it\tau}-e^{it\xi^3}}{i(\tau-\xi^3)} = i e^{it\tau} \frac{1}{\tau-\xi^3} (e^{-it(\tau-\xi^3)}-1) = i e^{it\tau} \sum_{k=1}^{\infty} \frac{(-it)^k(\tau-\xi^3)^{k-1}}{k!}$$

so using (\ref{Sob_product}) and the Liebniz rule (recall $\eta \in C_c^\infty(\R)$), we have
\begin{align*}
&\norm{\eta \iint_{|\tau-\xi^3|\leq1} e^{ix\xi} \sum_{k=1}^{\infty} ie^{it\tau} \frac{(-it)^k}{k!}(\tau-\xi^3)^{k-1} \widehat{F}(\xi,\tau) d\xi d\tau }_{H^{\frac{s+1}{3}}_t}\\
&\quad\lsim \sum_{k=1}^{\infty} \frac{\norm{(-it)^k \eta}_{H^1}}{k!} \norm{\iint_{|\tau-\xi^3|\leq1} e^{ix\xi}e^{it\tau} (\tau-\xi^3)^{k-1} \widehat{F}(\xi,\tau) d\xi d\tau }_{H^{\frac{s+1}{3}}_t} \\
&\quad\lsim \sum_{k=1}^{\infty} \frac{1}{(k-1)!} \norm{\jbrac{\tau}^{\frac{s+1}{3}} \int_{|\tau-\xi^3|\leq1} e^{ix\xi} (\tau-\xi^3)^{k-1} \widehat{F}(\xi,\tau) d\xi}_{L^2_\tau}
\end{align*}
where we've used $\|t^k \eta \|_{H^1} \lsim \|kt^{k-1}\eta\|_{L^2} + \|t^k \eta'(t)\|_{L^2} \lsim k$.

Since $\displaystyle \sum_{k=1}^{\infty} \frac{1}{(k-1)!}$ converges, it is enough to bound

\begin{align*}
\norm{\jbrac{\tau}^{\frac{s+1}{3}} \int |\widehat{F}(\xi,\tau)| d\xi }_{L^2_\tau} &\lsim \left[ \int \jbrac{\tau}^{\frac{2(s+1)}{3}} \left(\int_{|\tau-\xi^3|\leq1} \jbrac{\xi}^{-2s} d\xi \right) \left( \int_{|\tau-\xi^3|\leq1} \jbrac{\xi}^{2s}|\widehat{F}|^2 d\xi \right) d\tau \right]^{1/2} \\
&\quad\lsim \sup_{\tau} \left[\jbrac{\tau}^{\frac{2(s+1)}{3}} \int_{|\tau-\xi^3|\leq1} \jbrac{\xi}^{-2s} d\xi \right]^{\frac12} \norm{F}_{X^{s,-b}}.
\end{align*}

\noindent The supremum is clearly bounded for $|\tau|\leq 2$. For $|\tau|>2$  we can change variables $(\rho = \xi^3)$ to bound it by
$$\jbrac{\tau}^{\frac{2(s+1)}{3}} \int \limits_{|\tau|-1}^{|\tau|+1} \jbrac{\rho}^{\frac{-2s}{3}} \frac{1}{3\rho^{2/3}} d\rho
\lsim \jbrac{\tau}^{\frac{2(s+1)}{3}}  \int \limits_{|\tau|-1}^{|\tau|+1} \rho^{\frac{-2(s+1)}{3}} d\rho \leq 2$$
since $1<|\rho| \sim \jbrac{\tau}$ here.\\
\\
\emph{Case 2)} $|\tau - \xi^3| > 1$\\

In this case we separate the $H_t$ norm into two terms
\begin{align*}
&\norm{\eta \iint_{|\tau-\xi^3|>1} e^{ix\xi} \frac{e^{it\tau}-e^{it\xi^3}}{i(\tau-\xi^3)} \widehat{F}(\xi,\tau) d\xi d\tau }_{H^{\frac{s+1}{3}}_t}\\
&\quad \leq \norm{\eta \iint_{|\tau-\xi^3|>1} e^{ix\xi} \frac{e^{it\tau}}{\tau-\xi^3} \widehat{F}(\xi,\tau) d\xi d\tau }_{H^{\frac{s+1}{3}}_t}+
\norm{\eta \iint_{|\tau-\xi^3|>1} e^{ix\xi} \frac{e^{it\xi^3}}{\tau-\xi^3} \widehat{F}(\xi,\tau) d\xi d\tau }_{H^{\frac{s+1}{3}}_t}\\
&\quad  := \norm{I} + \norm{II}
\end{align*}

\noindent For $\norm{I}$ we use (\ref{Sob_product}) immediately:
\begin{align*}
\norm{I} &\lsim \norm{\eta}_{H^1_t} \norm{\jbrac{\tau}^{\frac{s+1}{3}} \int_{|\tau-\xi^3|>1} \frac{1}{\jbrac{\tau-\xi^3}} |\widehat{F}(\xi,\tau)| d\xi}_{L^2_\tau}\\
 & \lsim \left[\int \jbrac{\tau}^{\frac{2(s+1)}{3}} \left( \int \frac{d\xi}{\jbrac{\xi}^{2s}\jbrac{\tau-\xi^3}^{2-2b}} \right) \left(\int \jbrac{\xi}^{2s} \jbrac{\tau-\xi^3}^{-2b} |\widehat{F}(\xi,\tau)|^2 d\xi  \right)d\tau \right]^{1/2}\\ 
 &\leq \sup_{\tau} \left[ \jbrac{\tau}^{\frac{2(s+1)}{3}} \int \frac{d\xi}{\jbrac{\xi}^{2s}\jbrac{\tau-\xi^3}^{2-2b}} \right]^{1/2} \norm{F}_{X^{s,-b}}
 \lsim \norm{F}_{X^{s,-b}}.
\end{align*}
The above supremum is finite because
\begin{align*}
&\jbrac{\tau}^{\frac{2(s+1)}{3}} \int \limits_{|\xi|\leq1} \frac{d\xi}{\jbrac{\xi}^{2s}\jbrac{\tau-\xi^3}^{2-2b}} + 
\jbrac{\tau}^{\frac{2(s+1)}{3}} \int \limits_{|\xi|>1} \frac{d\xi}{\jbrac{\xi}^{2s}\jbrac{\tau-\xi^3}^{2-2b}}\\
& \quad \lsim \jbrac{\tau}^{\frac{2(s+1)}{3}} \frac{1}{\jbrac{\tau}^{2-2b}} \int \limits_{|\xi|\leq1} \frac{d\xi}{\jbrac{\xi}^{2s}}
+ \jbrac{\tau}^{\frac{2(s+1)}{3}} \int \limits_{|\rho|>1} \frac{d\rho}{\jbrac{\rho}^{\frac{2(s+1)}{3}} \jbrac{\tau-\rho}^{2-2b}}\\
& \quad \lsim \jbrac{\tau}^{\frac{2(s+1)}{3}} \frac{1}{\jbrac{\tau}^{2(1-b)}} +
\jbrac{\tau}^{\frac{2(s+1)}{3}} \jbrac{\tau}^{-\frac{2(s+1)}{3}} \lsim 1
\end{align*}
where we've used the calculus lemma \ref{A.2} in the $|\rho|>1$ term and the fact that $0 \leq \frac{s+1}{3} \leq \frac{1}{2}$, $b<\frac{1}{2}$.\\

For $\norm{II}$, we separate into $|\xi|\leq 1$ and $|\xi|>1$. For $|\xi|\leq1$, we use Minkowski's inequality followed by the Cauchy-Schwarz inequality
\begin{align*}
\norm{II_{|\xi|\leq1}} &\leq \int \limits_{|\tau-\xi^3|>1} \int \limits_{|\xi|\leq1} \norm{\eta e^{it\xi^3}}_{H^{\frac{s+1}{3}}_t} \frac{|\widehat{F}(\xi,\tau)|}{|\tau-\xi^3|} d\xi d\tau\\
&\lsim \iint \limits_{|\xi|\leq1} \frac{1}{\jbrac{\tau-\xi^3}}|\widehat{F}(\xi,\tau)| d\xi d\tau \\
&\lsim  \left[ \iint_{|\xi|\leq1} \frac{1}{\jbrac{\tau}^{2-2b}} d\xi d\tau \right]^{1/2} \norm{F}_{X^{s,-b}} \lsim \norm{F}_{X^{s,-b}}
\end{align*}
since $2-2b>1$ for $b<\frac12$. 
\\

For $|\xi|>1$, we start with \eqref{Sob_product} and the change of variables $\rho = \xi^3$:
\begin{align*}
\norm{II_{|\xi|>1}} &\lsim \norm{\eta}_{H^1_t} \norm{\int_{|\tau-\rho|>1} \int_{|\rho|>1} \frac{e^{ix\sqrt[3]{\rho}}e^{it\rho}}{i(\tau-\rho)}\widehat{F}(\sqrt[3]{\rho},\tau) \frac{1}{3\rho^{2/3}} d\rho d\tau }_{H^{\frac{s+1}{3}}_t}\\
& \lsim \norm{\jbrac{\rho}^{\frac{s+1}{3}} \mathcal{F}_t \left[ \mathcal{F}_\rho^{-1}\left( \int_{|\tau-\rho|>1,\, |\rho|>1}\frac{e^{ix\sqrt[3]{\rho}}}{i(\tau-\rho)} \widehat{F}(\sqrt[3]{\rho},\tau) \frac{1}{3\rho^{2/3}} d\tau \right)(t) \right](\rho)}_{L^2_\rho}\\
& \lsim \norm{\jbrac{\rho}^{\frac{s+1}{3}} \int \frac{1}{\jbrac{\tau-\rho}}|\widehat{F}(\sqrt[3]{\rho},\tau)|\frac{1}{\rho^{2/3}} d\tau }_{L^2_{|\rho|>1}}\\
& \lsim \left[\int \jbrac{\rho}^{\frac{2(s+1)}{3}} \frac{1}{\jbrac{\rho}^{2/3}} \left(\int \frac{1}{\jbrac{\tau-\rho}^{2-2b}} d\tau \right) \left(\int \frac{|\widehat{F}(\sqrt[3]{\rho},\tau)|^2}{\jbrac{\tau-\rho}^{2b}} d\tau \right) \frac{1}{|\rho|^{2/3}} d\rho \right]^{1/2}\\
& \lsim \left[\iint \jbrac{\rho}^{\frac{2s}{3}} \jbrac{\tau-\rho}^{-2b} |\widehat{F}(\sqrt[3]{\rho},\tau)|^2 \frac{1}{|\rho|^{2/3}} d\tau d\rho \right]^{1/2} \lsim \norm{F}_{X^{s,-b}}
\end{align*}

Next suppose $s>\frac12$ and proceed similarly. Rather than \eqref{Sob_product}, in this case we use the algebra property of Sobolev spaces
\begin{equation} \label{Sob_algebra}
\|u v \|_{H^s} \leq \|u\|_{H^s} \|v\|_{H^s}.
\end{equation}
Since $\eta$ is smooth, using \eqref{Sob_algebra} instead of \eqref{Sob_product} doesn't significantly alter the proof. The only remaining difference with the $0\leq s \leq \frac12$ case is that we needed $s\leq\frac12$ to bound $\|I\| \lsim \|F\|_{X^{s,-b}}$. For $s>\frac12$ we use the fact that $\la\tau\ra^{\frac{s+1}{3}} \lsim \la\tau-\xi^3\ra^{\frac{s+1}{3}} + |\xi|^{s+1}$ to write

\begin{align*}
\|I\| &= \left\| \eta \iint_{|\tau-\xi^3|>1} e^{ix\xi} \frac{e^{it\tau}}{\tau-\xi^3} \widehat{F}(\xi,\tau) \, d\xi d\tau \right\|_{H^{\frac{s+1}{3}}_t} \\
&\lsim \norm{\eta}_{H^1_t} \norm{\jbrac{\tau}^{\frac{s+1}{3}} \int \frac{1}{\jbrac{\tau-\xi^3}} |\widehat{F}(\xi,\tau)| d\xi}_{L^2_\tau} \\
&\lsim \left\| \int \la\tau-\xi^3\ra^{\frac{s-2}{3}} |\wh{F}(\xi,\tau)| d\xi\right\|_{L^2_\tau} + \left\| \int \frac{|\xi|^{s+1}}{\la\tau-\xi^3\ra} |\wh{F}(\xi,\tau)|d\xi \right\|_{L^2_\tau}.
\end{align*}

We apply Cauchy-Schwarz to each term. For the first term, we have
\begin{align*}
\left\| \int \la\tau-\xi^3\ra^{\frac{s-2}{3}} |\wh{F}(\xi,\tau)| d\xi\right\|_{L^2_\tau} &\lsim \left[ \int \left( \int \frac{1}{\la\xi\ra^{1+}} d\xi\right) \left(\int \la\xi\ra^{1+}\la\tau-\xi^3\ra^{\frac{2(s-2)}{3}} |\wh{F}(\xi,\tau)|^2 d\xi\right) d\tau \right]^{\frac12} \\
&\lsim \|F\|_{X^{\frac12+, \frac{s-2}{3}}}.
\end{align*}
And for the second term, since $b<\frac12$
\begin{align*}
\left\| \int \frac{|\xi|^{s+1}}{\la\tau-\xi^3\ra} |\wh{F}(\xi,\tau)|d\xi \right\|_{L^2_\tau} & \lsim \left[ \int \left( \int \frac{|\xi|^2}{\la\tau-\xi^3\ra^{2-2b}} d\xi\right) \left(\int \frac{\la\xi\ra^{2s}}{\la\tau-\xi^3\ra^{2b}} |\wh{F}(\xi,\tau)|^2 d\xi\right) d\tau \right]^{\frac12} \\
&\lsim \sup_\tau \left[ \int \frac{|\xi|^2}{\la\tau-\xi^3\ra^{2-2b}} d\xi \right]^\frac12 \, \|F\|_{X^{s,-b}} \\
&\lsim \sup_\tau \left[ \int \frac{1}{\la\tau-\rho\ra^{2-2b}} d\rho \right]^\frac12 \, \|F\|_{X^{s,-b}} \lsim \|F\|_{X^{s,-b}}.
\end{align*}

\end{proof}

We turn now to the boundary term $W_1(f-p)$ defined in \eqref{boundary}. 

\begin{prop} \label{Boundary_CtHx}
For $s \geq 0$, $h \in H^{\frac{s+1}{3}}(\R^+)$ satisfying $h(0) = 0$ if $s>\frac12$, 
\begin{equation*}
\norm{W_1(h)}_{C^0_t H^s_x} \lsim \norm{h}_{H^{\frac{s+1}{3}}(\R^+)}.
\end{equation*}
\end{prop}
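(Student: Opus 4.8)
The plan is to pass to the spatial Fourier transform, reduce the claim to the $L^2$-boundedness of an explicit integral operator in the frequency variables, and close it with a \emph{weighted} Schur test. First I would compute, for fixed $t$, the Fourier transform of \eqref{boundary} in $x$. Writing $\phi(y) = e^{[-\frac{\sqrt{3}}{2}-\frac{1}{2}i]y}\rho(y)$ and using the scaling identity $\mathcal{F}_x[\phi(\beta\,\cdot)](\xi) = \beta^{-1}\wh\phi(\xi/\beta)$ for $\beta>0$, this gives
\begin{equation*}
\mathcal{F}_x\big(W_1 h\big)(\xi,t) = \frac{3}{2\pi}\int_0^\infty e^{i\beta^3 t}\,\beta\,\wh h(\beta^3)\,\wh\phi(\xi/\beta)\,d\beta .
\end{equation*}
Since $\rho\in C^\infty$ is supported in $(-2,\infty)$ and the factor $e^{-\frac{\sqrt3}{2}y}$ decays as $y\to+\infty$, the function $\phi$ is smooth with all derivatives decaying faster than any polynomial, so $\wh\phi\in\mathcal{S}(\R)$. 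I would record the two features of $\wh\phi$ that drive the argument: rapid decay $|\wh\phi(\zeta)|\lsim_N\jbrac{\zeta}^{-N}$, and the fact that $\wh\phi(0)\neq 0$ in general, so $\wh\phi$ does not vanish at the origin.

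Next I would substitute $\mu=\beta^3$ and invoke Lemma \ref{extensions}: since $\wh h = \mathcal{F}_t[\chi_{[0,\infty)}h]$, and using $h(0)=0$ exactly when $\frac{s+1}{3}>\frac12$ (that is, $s>\frac12$; the excluded value $s=\frac12$ is where Lemma \ref{extensions} fails), we have $\norm{\jbrac{\mu}^{\frac{s+1}{3}}\wh h}_{L^2_\mu}\lsim\norm{h}_{H^{\frac{s+1}{3}}(\R^+)}$. Writing $\wh h(\mu)=\jbrac{\mu}^{-\frac{s+1}{3}}H(\mu)$ with $\norm{H}_{L^2}\lsim\norm{h}_{H^{\frac{s+1}{3}}(\R^+)}$, the claim $\sup_t\norm{W_1 h(\cdot,t)}_{H^s_x}\lsim\norm{H}_{L^2}$ reduces to the boundedness from $L^2_\mu(0,\infty)$ to $L^2_\xi(\R)$, uniformly in $t$, of the operator with kernel
\begin{equation*}
\mathcal{K}(\xi,\mu) = \jbrac{\xi}^s\,\mu^{-1/3}\jbrac{\mu}^{-\frac{s+1}{3}}\,\wh\phi(\xi\mu^{-1/3})\,e^{i\mu t}.
\end{equation*}
Taking absolute values removes the harmless factor $|e^{i\mu t}|=1$, so uniformity in $t$ becomes automatic.

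The heart of the matter, and the step I expect to be the main obstacle, is that the naive (unweighted) Schur test — equivalently a single Cauchy--Schwarz in $\mu$ — fails by a logarithm: because $\wh\phi(\xi\mu^{-1/3})\to\wh\phi(0)\neq 0$ as $\mu\to\infty$, every high time-frequency $\mu$ contributes to all spatial frequencies $\xi$, and weighting these contributions by $\jbrac{\xi}^{2s}$ produces a borderline $\int^\infty \xi^{-1}\,d\xi$. I would resolve this with the \emph{weighted} Schur test using $p(\xi)=\jbrac{\xi}^{-1/2}$ and $q(\mu)=\jbrac{\mu}^{-1/2}$, which, after weighting, makes the distinct scales $\mu\sim|\xi|^3$ essentially orthogonal (this is the analytic content of a dyadic almost-orthogonality argument). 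Concretely I would verify
\begin{equation*}
\int_0^\infty|\mathcal{K}(\xi,\mu)|\,\jbrac{\mu}^{-1/2}\,d\mu\lsim\jbrac{\xi}^{-1/2},\qquad \int_\R|\mathcal{K}(\xi,\mu)|\,\jbrac{\xi}^{-1/2}\,d\xi\lsim\jbrac{\mu}^{-1/2},
\end{equation*}
both via the change of variables $\zeta=\xi\mu^{-1/3}$, splitting according to whether $|\zeta|\le 1$ or $|\zeta|>1$ (equivalently $\mu\ge|\xi|^3$ or $\mu<|\xi|^3$). In each case the Schwartz decay of $\wh\phi$ kills the region $|\zeta|>1$, while on the support $|\zeta|\lesssim 1$ the remaining integral reduces to $\int_0^\infty\zeta^{\,s-\frac12}|\wh\phi(\zeta)|\,d\zeta$, which converges precisely because $s-\tfrac12>-1$. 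This strict convergence — guaranteed by $s\ge 0$, in fact by $s>-\tfrac12$, and exactly tolerating the nonvanishing of $\wh\phi(0)$ — is what removes the logarithm, and the weighted Schur test then yields the operator bound with constant independent of $t$.

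Finally, the uniform-in-$t$ bound places $W_1 h$ in $L^\infty_t H^s_x$; to upgrade this to $C^0_t H^s_x$ I would observe that $t\mapsto\mathcal{F}_x(W_1 h)(\xi,t)$ is continuous for each $\xi$ and that, applying the same kernel estimate to $e^{i\mu t}-e^{i\mu t'}$, dominated convergence gives $\norm{W_1 h(\cdot,t)-W_1 h(\cdot,t')}_{H^s_x}\to 0$ as $t'\to t$. I expect the only delicate computation to be the two Schur integrals at high frequency, where one must resist over-estimating the non-decaying contribution of $\wh\phi$ near the origin; the weights $\jbrac{\cdot}^{-1/2}$ are chosen precisely to balance the diagonal $\mu\sim|\xi|^3$ against that tail.
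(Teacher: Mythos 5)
Your proof is correct, but it takes a genuinely different route from the paper's. The paper factors the boundary operator as $W_1h = \tfrac{3}{2\pi}\int f(\beta x)\,\mathcal{F}_x[e^{t\del_{xxx}}\psi](\beta)\,d\beta$ with $\wh\psi(\beta)=\beta^2\wh h(\beta^3)\chi_{[0,\infty)}(\beta)$, reduces to $\norm{\psi}_{H^s}\lsim\norm{h}_{H^{\frac{s+1}{3}}(\R^+)}$ (the same change of variables $\rho=\beta^3$ plus Lemma \ref{extensions} that you use), and then proves that the $t$-independent operator $Tg(x)=\int f(\beta x)\wh g(\beta)\,d\beta$ is bounded on $H^s$: the $L^2$ case via Minkowski and the scaling identity $\norm{x^{-1}\wh g(x^{-1}\beta)}_{L^2_x}=\beta^{-1/2}\norm{g}_{L^2}$, integer $s$ by differentiating under the integral, and fractional $s$ by interpolation; unitarity of the Airy group then absorbs the time evolution. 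You instead bound the explicit Fourier-side kernel by a weighted Schur test with weights $\jbrac{\xi}^{-1/2}$, $\jbrac{\mu}^{-1/2}$, which I have checked closes in all regimes (the row integral gives $\jbrac{\xi}^{-1/2}$ after splitting at $\mu=|\xi|^3$, the column integral gives $\jbrac{\mu}^{-1/2}$ after rescaling $\zeta=\xi\mu^{-1/3}$, and the reduction to $\int_0^\infty\zeta^{s-\frac12}|\wh\phi(\zeta)|\,d\zeta$ with $s-\tfrac12>-1$ is exactly right). What each approach buys: the paper's factorization isolates a clean time-independent operator and recycles Airy unitarity, at the cost of an interpolation step for fractional $s$; your argument handles all real $s\geq0$ (indeed $s>-\tfrac12$) in one shot, makes the uniformity in $t$ and the upgrade to $C^0_t$ via dominated convergence completely transparent, and exposes the mechanism — the weight $\jbrac{\cdot}^{-1/2}$ in your Schur test is morally the same $\beta^{-1/2}$ that appears in the paper's $L^2$ bound for $T$, so the two proofs agree at their analytic core while organizing it differently. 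Two minor quibbles, neither affecting correctness: your diagnosis that the unweighted Schur test "fails by a logarithm" is accurate only at $s=1$; for $s<1$ the unweighted row integral $\int^\infty\mu^{-\frac{s+2}{3}}d\mu$ diverges by a power, so the weights are doing even more work than you say. And like the paper's proof, your use of Lemma \ref{extensions} implicitly requires $\tfrac{s+1}{3}<\tfrac32$, i.e.\ $s<\tfrac72$, and excludes $s=\tfrac12$ — worth stating, though the main theorem's hypotheses already enforce both.
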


\begin{proof}
 
We set $f(x) = e^{\left[-\frac{\sqrt{3}}{2}-\frac{i}{2}\right]x} \rho(x)$. Note that $f$ is a Schwartz function ($f \in \mathcal{S}$). Then we can write
$$W_1 h (x,t) = \frac{3}{2\pi} \int f(\beta x) \mathcal{F}_x [e^{t\del_{xxx}} \psi ] (\beta) d\beta$$
where 
$$\widehat{\psi} = \beta^2 \,  \widehat{h}(\beta^3) \chi_{[0,\infty)}(\beta).$$ Recall our notation of writing $\widehat{h}$ to mean $\mathcal{F}_t [\chi_{[0,\infty)}h ]$. We now show that $\norm{\psi}_{H^s} \lsim \norm{h}_{H^{\frac{s+1}{3}}_t(\R^+)}.$

\begin{align*}
\norm{\psi}_{H^s} &= \norm{\jbrac{\beta}^s \beta^2 \, \widehat{h}(\beta^3) }_{L^2_{\beta \geq 0}} = \sqrt{\int_0^\infty \jbrac{\beta}^{2s} \beta^2 |\widehat{h}(\beta^3)|^2 \beta^2 d\beta} \\
& \stackrel{(\rho = \beta^3)}{\lsim} \sqrt{\int_0^\infty \jbrac{\rho^{\frac{1}{3}}}^{2s} \rho^{\frac{2}{3}} |\widehat{h}(\rho)|^2 d\rho} = 
\norm{\jbrac{\rho^{\frac{1}{3}}}^s \rho^{\frac{1}{3}} \mathcal{F}_t[\chi_{[0,\infty)}h](\rho)}_{L^2_{\rho \geq 0}} \\
&\lsim \norm{\mathcal{F}_t[\chi_{[0,\infty)}h](\rho)}_{L^2_{0 \leq \rho \leq 1}} + \norm{\jbrac{\rho}^{\frac{s+1}{3}} \mathcal{F}_t[\chi_{[0,\infty)}h](\rho)}_{L^2_{\rho > 1}} \\
& \leq \norm{\chi_{[0,\infty)}h}_{L^2_t(\R)} + \norm{\chi_{[0,\infty)}h}_{H^{\frac{s+1}{3}}_t(\R)} 
 \lsim \norm{h}_{H^{\frac{s+1}{3}}_t (\R^+)}
\end{align*}
using Lemma \ref{extensions} in the last inequality, along with the fact that $\frac{s+1}{3} \geq 0$.\\

Since $e^{t\del_{xxx}}$ is continuous on $H^s$, it remains only to show that, for $f \in \mathcal{S}$
$$T g(x) := \int f(\beta x) \,\widehat{g}(\beta) d\beta$$
satisfies $\norm{Tg}_{H^s} \lsim \norm{g}_{H^s},$ where the implied constant may depend on $f$ (and its derivatives).\\

For $s = 0$, we have
\begin{align*}
|Tg(x)| &\stackrel{\beta x \to \beta}{\leq} \int |f(\beta \widehat{g}(x^{-1} \beta)| x^{-1} d\beta \qquad \text{for all }x \neq 0 \\
\norm{Tg}_{L^2_x} &\leq \int |f(\beta)| \norm{x^{-1} \widehat{g}( x^{-1}\beta ) }_{L^2_x} \, d\beta \leq \int |f(\beta)| \frac{1}{\sqrt{\beta}} d\beta \norm{g}_{L^2} \lsim \norm{g}_{L^2}
\end{align*}
using the fact that $f \in \mathcal{S}$ and
$$\norm{x^{-1} \widehat{g} ( x^{-1}\beta )}_{L^2_x} = \left[\int \frac{1}{x^2} |\widehat{g} (y)|^2 dy \right]^{1/2} = \frac{1}{\sqrt{\beta}} \norm{g}_{L^2}.$$

The case $s \in \mathbb{N}$ follows from the $s=0$ case because
$$\del_x (Tg) = \int f^{(s)}(\beta x) \beta^s \, \widehat{g}(\beta) d\beta.$$
Then by interpolation, we have $\norm{Tg}_{H^s} \lsim \norm{g}_{H^s}$ for all $s \geq 0$. Hence 
$$\norm{W_1h}_{H^s_x} \lsim \norm{e^{t\del_{xxx}} \psi}_{H^s_x} = \norm{\psi}_{H^s_x} \lsim \norm{h}_{H^{\frac{s+1}{3}}_t (\R^+)}.$$ 

Finally, continuity in $t$ follows from the dominated convergence theorem as usual. 

\end{proof} 

\begin{prop} \label{Boundary_CxHt}
For $s\geq 0$, $h \in H^{\frac{s+1}{3}}(\R^+)$ satisfying $h(0) = 0$ if $s>\frac12$,
$$\norm{\eta W_1 (h)}_{C^0_x H^{\frac{s+1}{3}}_t} \lsim \norm{h}_{H^{\frac{s+1}{3}}_t (\R^+)}.$$
\end{prop}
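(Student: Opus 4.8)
The plan is to read off the time-Fourier transform of $W_1 h(x,\cdot)$ directly from the definition \eqref{boundary} and reduce the estimate to the weighted $L^2_\tau$ bound already packaged in Lemma \ref{extensions}. Changing variables $\tau = \beta^3$ in \eqref{boundary} (so that $\beta^2\,d\beta = \frac13\,d\tau$) turns $W_1 h(x,t)$ into $\frac{1}{2\pi}\int_0^\infty m(x,\tau)\,\wh h(\tau)\,e^{i\tau t}\,d\tau$, where $m(x,\tau) := e^{\tau^{1/3}[-\frac{\sqrt{3}}{2}-\frac{i}{2}]x}\,\rho(\tau^{1/3}x)$ and, as before, $\wh h = \mathcal F_t[\chi_{[0,\infty)}h]$. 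Hence for each fixed $x$ we have $\mathcal F_t[W_1 h(x,\cdot)](\tau) = \chi_{[0,\infty)}(\tau)\,m(x,\tau)\,\wh h(\tau)$, so that $\norm{W_1 h(x,\cdot)}_{H^{\frac{s+1}{3}}_t}^2 = \int_0^\infty \jbrac{\tau}^{\frac{2(s+1)}{3}}|m(x,\tau)|^2|\wh h(\tau)|^2\,d\tau$.

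The crux is a bound on $|m(x,\tau)|$ that is uniform in both $x\in\R$ and $\tau\geq 0$. First I would dispose of the harmless factor $\eta$: since $\eta \in C^\infty_c(\R)$, multiplication by $\eta$ is bounded on $H^\sigma_t$ for every $\sigma \geq 0$, so $\norm{\eta\,W_1 h(x,\cdot)}_{H^{\frac{s+1}{3}}_t}\lsim \norm{W_1 h(x,\cdot)}_{H^{\frac{s+1}{3}}_t}$ with constant independent of $x$, and it suffices to bound the latter uniformly in $x$. For the control of $|m|$ I split into two cases. When $x \geq 0$, the exponent satisfies $-\frac{\sqrt{3}}{2}\tau^{1/3}x \leq 0$, giving $|m(x,\tau)| \leq \norm{\rho}_{L^\infty}$. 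When $x<0$ the exponential grows, but here the cutoff saves us: $\rho$ is supported in $(-2,\infty)$, so $\rho(\tau^{1/3}x)\neq 0$ forces $\tau^{1/3}|x| < 2$, whence $-\frac{\sqrt{3}}{2}\tau^{1/3}x = \frac{\sqrt{3}}{2}\tau^{1/3}|x| < \sqrt{3}$ and $|m(x,\tau)| \leq e^{\sqrt{3}}\norm{\rho}_{L^\infty}$. Thus $|m(x,\tau)| \leq C_0$ for all $x,\tau$, and therefore $\norm{W_1 h(x,\cdot)}_{H^{\frac{s+1}{3}}_t}^2 \leq C_0^2\int_0^\infty \jbrac{\tau}^{\frac{2(s+1)}{3}}|\wh h(\tau)|^2\,d\tau \leq C_0^2\norm{\chi_{[0,\infty)}h}_{H^{\frac{s+1}{3}}(\R)}^2 \lsim \norm{h}_{H^{\frac{s+1}{3}}(\R^+)}^2$, where the last step is Lemma \ref{extensions} (using $h(0)=0$ when $s>\frac12$). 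Taking the supremum over $x$ gives the desired norm bound.

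It remains to upgrade this to membership in $C^0_x H^{\frac{s+1}{3}}_t$, i.e. continuity of $x \mapsto \eta\,W_1 h(x,\cdot)$ into $H^{\frac{s+1}{3}}_t$. I would argue by dominated convergence: for each fixed $\tau$, $m(x,\tau)$ depends continuously on $x$ (both the exponential and the smooth $\rho$ are continuous in $x$), so $m(x,\tau)\to m(x_0,\tau)$ as $x\to x_0$, while $|m(x,\tau)-m(x_0,\tau)|^2 \leq 4C_0^2$ gives the domination $|m(x,\tau)-m(x_0,\tau)|^2\jbrac{\tau}^{\frac{2(s+1)}{3}}|\wh h(\tau)|^2 \leq 4C_0^2\jbrac{\tau}^{\frac{2(s+1)}{3}}|\wh h(\tau)|^2$, which is integrable by the bound just obtained. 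Hence $\norm{W_1 h(x,\cdot)-W_1 h(x_0,\cdot)}_{H^{\frac{s+1}{3}}_t}\to 0$, and boundedness of multiplication by $\eta$ transfers this continuity to $\eta\,W_1 h$.

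The only real obstacle is the uniform-in-$x$ control of $m$: the factor $e^{-\frac{\sqrt{3}}{2}\tau^{1/3}x}$ is genuinely unbounded for $x<0$ and large $\tau$, and the estimate would fail without observing that the spatial cutoff $\rho(\tau^{1/3}x)$ confines $\tau^{1/3}|x|$ to a bounded range precisely where the exponential is dangerous. Everything else — the change of variables, the appeal to Lemma \ref{extensions} for the extension-by-zero, the boundedness of $\eta$-multiplication, and the dominated-convergence argument for continuity — is routine.
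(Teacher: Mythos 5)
Your proof is correct, but it takes a genuinely different route from the paper's. The paper keeps the representation $W_1 h = \frac{3}{2\pi}\int f(\beta x)\,\mathcal{F}_x[e^{t\partial_{xxx}}\psi](\beta)\,d\beta$ with $\widehat{\psi}(\beta) = \beta^2\widehat{h}(\beta^3)\chi_{[0,\infty)}(\beta)$, rewrites it as $\int \widehat{f}(y)\,(e^{t\partial_{yyy}}\psi)(xy)\,dy$, and then applies Minkowski's inequality together with the Kato smoothing estimate (Lemma \ref{lem_Kato}) to get $\|\eta W_1 h\|_{H^{\frac{s+1}{3}}_t} \lesssim \|\widehat{f}\|_{L^1}\,\|\psi\|_{H^s} \lesssim \|h\|_{H^{\frac{s+1}{3}}(\R^+)}$, reusing the bound $\|\psi\|_{H^s}\lesssim \|h\|_{H^{\frac{s+1}{3}}(\R^+)}$ from the proof of Proposition \ref{Boundary_CtHx}. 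You instead observe that after the substitution $\tau=\beta^3$, for each fixed $x$ the operator is a temporal Fourier multiplier $m(x,\tau)=e^{\tau^{1/3}[-\frac{\sqrt3}{2}-\frac{i}{2}]x}\rho(\tau^{1/3}x)$ acting on $\chi_{[0,\infty)}h$, and your key point — that the support of $\rho$ confines $\tau^{1/3}|x|<2$ exactly where the exponential would blow up, giving $|m|\le e^{\sqrt3}\|\rho\|_{L^\infty}$ uniformly — is the correct and essential observation. This is more elementary: since the temporal target regularity $H^{\frac{s+1}{3}}_t$ matches the data regularity, no smoothing mechanism is needed, and you reduce directly to Lemma \ref{extensions}; as a bonus you get the bound for $W_1h$ itself (without the cutoff $\eta$, which you correctly reinsert via boundedness of multiplication by a fixed $C^\infty_c$ function on $H^\sigma_t$), and your dominated-convergence argument for $C^0_x$ continuity is clean. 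What the paper's Airy-superposition route buys is uniformity with the companion estimates (Propositions \ref{Boundary_CtHx} and \ref{Boundary_Xsb} recycle the same $\psi$ and $f$), and it is the argument that generalizes when the temporal index on the left differs from that of the data, where genuine Kato smoothing is needed. Note that your appeal to Lemma \ref{extensions} carries the same implicit exclusions as the paper's own proof (the case $\frac{s+1}{3}=\frac12$, i.e.\ $s=\frac12$, and the restriction $\frac{s+1}{3}<\frac32$), so this is not a new gap.
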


\begin{proof} As in the previous proposition, we write $W_1(h)$ as
$$ W_1 h = \frac{3}{2\pi} \int f(\beta x) \mathcal{F}_x \left[e^{t\del_{xxx}} \psi\right](\beta) d\beta $$
where 
$$f(x) = e^{\left[-\frac{\sqrt{3}}{2}-\frac{i}{2}\right]x} \rho(x) \text{, and } \widehat{\psi} = \beta^2 \widehat{h}(\beta^3) \chi_{[0,\infty)}(\beta). $$
Then
\begin{align*}
W_1 h &= \int \mathcal{F}_{\beta}[ f(\beta x)](y) e^{t\del_{yyy} \psi(y) dy}\\
 &= \int \frac{1}{x} \widehat{f}\left(\frac{y}{x}\right) e^{t\del_{yyy}} \psi(y) dy \\
 & \stackrel{y \to xy}{=} \int \widehat{f}(y) \left(e^{t\del_{yyy}} \psi  \right) (xy) dy.
\end{align*}

We use Minkowski's inequality in the $H^{\frac{s+1}{3}}$ norm and the fact that $f \in \mathcal{S}$ to bound
\begin{align*}
\norm{\eta W_1 h}_{H^{\frac{s+1}{3}}_t} & \leq \int |\widehat{f}(y)| \norm{\eta \left(e^{t\del_{yyy}}\psi\right)(xy)}_{H^{\frac{s+1}{3}}_t} (y) dy \\
 & \leq \sup_{y} \left[\norm{\eta \left(e^{t\del_{yyy}}\psi\right)(xy)}_{H^{\frac{s+1}{3}}_t} \right] \Vert \widehat{f} \, \Vert_{L^1} \\
 & \lsim \norm{\psi}_{H^s_y} \lsim \norm{h}_{H^{\frac{s+1}{3}}_t (\R^+)}.
\end{align*}

We used the Kato smoothing inequality (Lemma \ref{lem_Kato}) in the last line. The presence of the $x$ doesn't change the fact that the $H^{\frac{s+1}{3}}_t$ norm is uniformly bounded. The proof goes through exactly the same as in Lemma \ref{lem_Kato}, simply replacing $x$ with $y$ and $e^{ix\xi}$ with $e^{i(xy)\xi}$. Then continuity in x follows by the dominated convergence theorem as usual. 

\end{proof}

\begin{prop}\label{Boundary_Xsb}
For $s \geq 0$, $b\leq \frac12$, and $h \in H^{\frac{s+1}{3}}(\R^+)$ satisfying $h(0) = 0$ if $s>\frac12$,\\
$$\norm{\eta W_1(h)}_{X^{s,b}} \lsim \norm{h}_{H_t^{\frac{s+1}{3}}(\R^+)}.$$
\end{prop}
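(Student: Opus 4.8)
The plan is to work on the Fourier side and reduce everything to the bound $\norm{\psi}_{H^s}\lsim\norm{h}_{H^{\frac{s+1}{3}}(\R^+)}$ already established in the proof of Proposition~\ref{Boundary_CtHx} (this is the only place the hypothesis $h(0)=0$ for $s>\frac12$ enters, via Lemma~\ref{extensions}). Thus it suffices to prove $\norm{\eta W_1 h}_{X^{s,b}}\lsim\norm{\psi}_{H^s}$, where $\wh\psi(\beta)=\beta^2\wh h(\beta^3)\chi_{[0,\infty)}(\beta)$. With $f(y)=e^{[-\frac{\sqrt3}{2}-\frac i2]y}\rho(y)\in\mathcal S$ as in the previous propositions, the scaling identity $\mathcal F_x[f(\beta\,\cdot)](\xi)=\tfrac1\beta\wh f(\xi/\beta)$ together with $\mathcal F_t[\eta(t)e^{i\beta^3 t}](\tau)=\wh\eta(\tau-\beta^3)$ give
$$\mathcal F_{x,t}[\eta W_1 h](\xi,\tau)=\frac{3}{2\pi}\int_0^\infty\tfrac1\beta\,\wh f(\xi/\beta)\,\wh\eta(\tau-\beta^3)\,\wh\psi(\beta)\,d\beta.$$
The whole problem is then to transfer the output weights $\jbrac\xi^s\jbrac{\tau-\xi^3}^b$ onto $\jbrac\beta^s\wh\psi(\beta)$, exploiting the rapid decay of $\wh f$ and $\wh\eta$.

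First I would dispatch the spatial weight by splitting the $\beta$–integration into $\beta\ge|\xi|/2$, where $\jbrac\xi^s\lsim\jbrac\beta^s$, and $\beta<|\xi|/2$, where $|\wh f(\xi/\beta)|$ decays faster than any power of $\jbrac\xi$ and $\jbrac\beta$ and renders that range harmless. For the modulation weight I use
$$\jbrac{\tau-\xi^3}^b\lsim\jbrac{\tau-\beta^3}^b+\jbrac{\beta^3-\xi^3}^b,$$
valid for $0\le b\le\tfrac12$. In the first term $\jbrac{\tau-\beta^3}^b|\wh\eta(\tau-\beta^3)|$ is again a fixed Schwartz profile in $\tau-\beta^3$, so that factor is free. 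For the second term I change variables $\rho=\beta^3$, so that $\tfrac1\beta\,d\beta=\tfrac13\rho^{-1}d\rho$ and $\wh\eta(\tau-\rho)$ localizes $\rho$ to an $O(1)$ neighborhood of $\tau$; then on $\beta\ge|\xi|/2$ one has $\jbrac{\beta^3-\xi^3}^b=\jbrac{\rho-\xi^3}^b\lsim\jbrac\tau^{b}$, which is paid for by the $\rho^{-1}\sim\jbrac\tau^{-1}$ coming from the kernel. A Cauchy–Schwarz in $\rho$ then reduces matters to checking that a one–dimensional integral of the schematic form $\int\jbrac{\tau-\xi^3}^{2b}\jbrac\tau^{-2}\,d\tau$ is finite; this converges for $b<\tfrac12$, with the endpoint $b=\tfrac12$ recovered by a dyadic decomposition in frequency, after which the remaining integrations return $\norm{\jbrac\beta^s\wh\psi}_{L^2}=\norm{\psi}_{H^s}$.

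The delicate point, and the step I expect to be the main obstacle, is the low–frequency region where $\xi$ and $\beta$ (equivalently $\rho$) are simultaneously small. There the factor $\tfrac1\beta$ looks singular and a crude Schur test with absolute values in fact diverges logarithmically as $(\xi,\tau)\to0$. This divergence is spurious: the genuine $L^2$ contribution is finite because $\wh\psi(\beta)=\beta^2\wh h(\beta^3)$ vanishes to second order at $\beta=0$, cancelling $\tfrac1\beta$, and because the $\wh\eta$–localization confines $\beta$ to an interval of width $\sim\beta^{-2}$, so the $L^2$ mass produced near zero frequency is small even though the pointwise kernel is large. To make this rigorous I would avoid pointwise kernel bounds and instead run a weighted Cauchy–Schwarz (equivalently a weighted Schur test) in the variable $\rho=\beta^3$, with a weight vanishing as $\rho\to0$ chosen so that both the input and output integrals converge. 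Combining the three regimes yields $\norm{\eta W_1 h}_{X^{s,b}}\lsim\norm{\psi}_{H^s}$, and the reduction above completes the proof.
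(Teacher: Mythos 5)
Your Fourier-side reduction and the kernel formula $\wh{\eta W_1h}(\xi,\tau)=\frac{3}{2\pi}\int_0^\infty \wh\eta(\tau-\beta^3)\,\wh f(\xi/\beta)\,\tfrac1\beta\,\wh\psi(\beta)\,d\beta$ match the paper's starting point exactly, and your transfer of the weight $\jbrac{\xi}^s$ via the decay of $\wh f$ is a legitimate substitute for the paper's reduction to $s\in 3\N_0$ by differentiation and interpolation. The genuine gap is your opening move: reducing the target to $\norm{\eta W_1h}_{X^{s,b}}\lsim\norm{\psi}_{H^s}$ is a \emph{strictly stronger} statement than the proposition, because $\norm{\psi}_{H^s}^2\sim\int_0^\infty\la\rho^{1/3}\ra^{2s}\rho^{2/3}|\wh h(\rho)|^2\,d\rho$ carries a weight vanishing like $\rho^{2/3}$ as $\rho\to0$, whereas Lemma \ref{extensions} supplies the full inhomogeneous control $\norm{\chi_{(0,\infty)}h}_{H^{\frac{s+1}{3}}(\R)}$, which dominates $\norm{\wh h}_{L^2(0,1)}$ with weight $\sim1$. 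The paper's proof spends exactly this extra room: in the region $\beta^3+|\xi|^3\le1$ it reduces, after computing $\norm{\beta^4/(\beta^3+|\xi|^3)}_{L^2_{|\xi|\le1}}\lsim\beta^{3/2}$, to $\int_0^1\rho^{-1/6}|\wh h(\rho)|\,d\rho\lsim\norm{\chi_{(0,\infty)}h}_{L^2(\R)}$, and $\rho^{-1/6}$ is square-integrable near $0$, so there is no divergence at all. Paying with $\norm{\psi}$ instead forces $\int_0^1\rho^{-1}d\rho$ --- precisely the logarithm you observe. So the divergence is not an artifact of ``crude pointwise kernel bounds''; it is the price of your reduction, and it vanishes the moment you estimate directly against $\norm{h}_{H^{\frac{s+1}{3}}(\R^+)}$, as the paper does.

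Your proposed repair does not close this gap. Having reduced to $\norm{\psi}_{H^s}$, you may no longer invoke the pointwise structure ``$\wh\psi(\beta)=\beta^2\wh h(\beta^3)$ vanishes to second order'': that information is not encoded in $\norm{\psi}_{H^s}$, and it is false in the relevant generality anyway --- $\wh{\chi_{(0,\infty)}h}\in L^2$ may blow up like $\rho^{-1/2}(\log 1/\rho)^{-1}$ as $\rho\to0^+$ while $h\in H^{\frac{s+1}{3}}(\R^+)$, so $\wh\psi(\beta)$ may vanish only like $\beta^{1/2}$ modulo logarithms. Likewise the ``width $\sim\beta^{-2}$'' localization from $\wh\eta$ is meaningful only for $\beta$ large and says nothing near $\beta=0$, which is where your logarithm lives; and no pure power weight in the concluding ``weighted Cauchy--Schwarz'' can close a logarithmic divergence, so the crux is left unproved (it would require a genuine Mellin/orthogonality argument, which is absent). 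Two further problems: your schematic convergence check $\int\jbrac{\tau-\xi^3}^{2b}\jbrac{\tau}^{-2}d\tau$ is not uniform in $\xi$ (near $\tau=0$ the integrand is of size $|\xi|^{6b}$), and it fails at $b=\tfrac12$, which the proposition includes; the unexplained ``dyadic decomposition'' is doing real work there. The paper avoids both issues by combining $\jbrac{\tau-\xi^3}\lsim\jbrac{\tau-\beta^3}\jbrac{\beta^3+|\xi|^3}$ with $|\wh\eta(\tau-\beta^3)|\lsim\jbrac{\tau-\beta^3}^{-3}$, then Minkowski and Young (Lemma \ref{A.4}), which works painlessly up to and including $b=\tfrac12$. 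The salvage is simple: keep your kernel formula and your $s$-transfer, but drop the reduction to $\psi$, split into $\beta^3+|\xi|^3\le1$ and $>1$ as in the paper, and estimate against $\norm{h}_{H^{\frac{s+1}{3}}(\R^+)}$ via Lemma \ref{extensions} from the start.
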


\begin{proof}
We take $f$ as above and recall the notation of writing $\widehat{h}$ for $\mathcal{F}_t [\chi_{[0,\infty)}h ]$. We may assume $b>0$ and, by interpolation, we may also assume $s \in 3 \mathbb{N}_0$.  In fact, since
\begin{align*}
\del_x^{(s)} \eta W_1 h &= \frac{3}{2\pi} \eta \int_0^\infty f^{(s)}(\beta x) e^{i \beta^3 t} \beta^{s+2} \, \widehat{h}(\beta^3) d\beta \\
 &= \frac{3}{2\pi} (-i)^{(s/3)}  \eta \int_0^\infty f^{(s)}(\beta x) e^{i \beta^3 t} \beta^2 \, \mathcal{F}_t [\chi_{(0,\infty)} \del_t^{(s/3)} h](\beta^3) d\beta,
\end{align*}
it is enough to prove the bound for $s = 0$. We have
$$\widehat{\eta W_1 h}(\xi,\tau) = \frac{3}{2\pi}\int \limits_0^\infty \widehat{\eta}(\tau-\beta^3) \widehat{f} (\xi/\beta) \beta \, \widehat{h}(\beta^3) d\beta,$$

and because $f\in \mathcal{S}$,
$$|\widehat{f}(\xi/\beta)| \lsim \frac{1}{\jbrac{\xi/\beta}^3} \lsim \frac{1}{1+|\xi/\beta|^3} = \frac{\beta^3}{\beta^3+|\xi|^3}.$$

Similarly, as $\eta \in C^\infty$ with compact support, we are certainly free to bound
$$|\wh{\eta}(\tau-\beta^3)| \lsim \la\tau-\beta^3\ra^{-3},$$
and thus
$$\norm{\eta W_1 h}_{X^{0,b}} \lsim \norm{\jbrac{\tau-\xi^3}^{b} \int_0^\infty \la\tau-\beta^3\ra^{-3} \frac{\beta^4}{\beta^3+|\xi|^3} |\widehat{h}(\beta^3)| d\beta }_{L^2_{\xi \tau}}.$$

We consider the regions where $\beta^3 + |\xi|^3 \leq1$ and $\beta^3 + |\xi|^3 \geq 1$ separately. For the former case, 

\begin{align*}
\norm{\jbrac{\tau}^{b} \int_0^1 \jbrac{\tau}^{-3} \frac{\beta^4}{\beta^3+|\xi|^3} |\widehat{h}(\beta^3)|d\beta}_{L^2_{|\xi|\leq1} L^2_\tau} &\lsim \|\la\tau\ra^{-(3-b)}\|_{L^2_\tau}\int_0^1 \norm{\frac{\beta^4}{\beta^3+|\xi|^3}}_{L^2_{|\xi|\leq1}} |\widehat{h}(\beta^3)| d\beta \\
&\lsim \int_0^1 \beta^{\frac{3}{2}} |\widehat{h}(\beta^3)| d\beta\\
& \stackrel{\rho = \beta^3}{=} \int_0^1 \rho^{-\frac{1}{6}} |\widehat{h}(\rho)| d\rho \\
& \lsim \norm{\chi_{(0,\infty)} h}_{L^2(\R)} \leq \|\chi_{(0,\infty)} h\|_{H^{\frac13}_t(\R)} \lsim \norm{h}_{H_t^{\frac{1}{3}}(\R^+)}
\end{align*}
where we've used Cauchy-Schwarz, the Plancharel identity, and Lemma \ref{extensions} in the last line.\\
In the latter case where $\beta^3 +  |\xi|^3>1$, we have $\jbrac{\tau-\xi^3} \lsim \jbrac{\tau-\beta^3}\jbrac{\beta^3 + |\xi|^3}$ and $\beta^3 + |\xi|^3 \sim \jbrac{\beta^3 + |\xi|^3}$, and thus
\begin{align*}
\norm{\int_0^\infty \jbrac{\tau-\beta^3}^{b-3} \frac{\beta^4}{(\beta^3+|\xi|^3)^{1-b}} |\widehat{h}(\beta^3)| d\beta}_{L^2_{\xi \tau}} &\lsim
\norm{\int_0^\infty \jbrac{\tau-\beta^3}^{b-3} \norm{\frac{\beta^4}{(\beta^3+|\xi|^3)^{1-b}}}_{L^2_\xi} |\widehat{h}(\beta^3)| d\beta}_{L^2_\tau} \\
& \lsim \norm{\int_0^\infty \jbrac{\tau-\beta^3}^{b-3} \beta^{\frac32+3b} |\widehat{h}(\beta^3)|d\beta}_{L^2_\tau} \\
& \stackrel{\rho = \beta^3}{\lsim} \norm{\int_0^\infty \jbrac{\tau-\rho}^{b-3} \rho^{b-\frac16} |\widehat{h}(\rho)| d\rho }_{L^2_\tau} \\
& \lsim \norm{\jbrac{\tau}^{-(3-b)}}_{L^1_\tau} \norm{\la\rho\ra^{\frac13} |\widehat{h}(\rho)|}_{L^2_\rho} \\
&\lsim \|\chi_{(0,\infty)} h\|_{H^{\frac13W}_t(\R)} \lsim \norm{h}_{H_t^{\frac{1}{3}}(\R^+)}
\end{align*}
where we've used Minkowski's inequality, Young's inequality (Lemma \ref{A.4}), and then Lemma \ref{extensions}. We also needed $b\leq\frac12$ in the third line  so that $b -\frac16 \leq \frac13$.

\end{proof}

\begin{cor} \label{Boundary_V}
For $0<s<\frac72$, $s\neq \frac12$, $s\neq\frac32$, $\frac12<\gamma\leq \frac12+\frac{s}{3}$, $h \in H^{\frac{s+1}{3}}(\R^+)$ satisfying $h(0) = 0$ if $s>\frac12$, 
$$\|\eta W_1(h)\|_{V^\gamma} \lsim \|h\|_{H^\frac{s+1}{3}_t(\R^+)}.$$
\end{cor}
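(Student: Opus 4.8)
The plan is to start from the Fourier-side representation and pointwise bounds already established in the proof of Proposition \ref{Boundary_Xsb}. Writing $f(x) = e^{[-\frac{\sqrt3}{2}-\frac i2]x}\rho(x)\in\mathcal S$ as there, we have
$$\wh{\eta W_1 h}(\xi,\tau) = \frac{3}{2\pi}\int_0^\infty \wh\eta(\tau-\beta^3)\,\wh f(\xi/\beta)\,\beta\,\wh h(\beta^3)\,d\beta,$$
together with $|\wh f(\xi/\beta)|\lsim \frac{\beta^3}{\beta^3+|\xi|^3}$ and $|\wh\eta(\tau-\beta^3)|\lsim\la\tau-\beta^3\ra^{-3}$. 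The structural difference from Proposition \ref{Boundary_Xsb} is that the $V^\gamma$ norm restricts to $|\xi|\leq1$ and carries no $\la\xi\ra^s$ weight, so the reduction to $s=0$ by differentiation in $x$ is unavailable; instead I would work with general $s$ directly, letting the regularity of $h$ enter only through the hypothesis $\gamma\leq\frac12+\frac s3$. Thus the goal is to bound
$$\norm{\chi_{|\xi|\leq1}\la\tau\ra^\gamma \int_0^\infty \la\tau-\beta^3\ra^{-3}\tfrac{\beta^4}{\beta^3+|\xi|^3}|\wh h(\beta^3)|\,d\beta}_{L^2_{\xi\tau}},$$
and I would split the $\beta$-integral into $\beta\leq1$ and $\beta>1$.

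For the low range $\beta\leq1$, on the support $|\xi|\leq1$ we have $\la\tau\ra\sim\la\tau-\beta^3\ra$ (since $\beta^3\leq1$), so $\la\tau\ra^\gamma\la\tau-\beta^3\ra^{-3}\lsim\la\tau\ra^{\gamma-3}\in L^2_\tau$ because $\gamma<\frac52$. Applying Minkowski's inequality to move the $\beta$-integral outside and using $\norm{\frac{\beta^4}{\beta^3+|\xi|^3}}_{L^2_{|\xi|\leq1}}\lsim\beta^{3/2}$ (the same computation as in Proposition \ref{Boundary_Xsb}), this contribution is controlled by $\int_0^1\beta^{3/2}|\wh h(\beta^3)|\,d\beta$. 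The substitution $\rho=\beta^3$ turns this into $\int_0^1\rho^{-1/6}|\wh h(\rho)|\,d\rho$, and Cauchy-Schwarz (using $\int_0^1\rho^{-1/3}\,d\rho<\infty$) bounds it by $\norm{\chi_{(0,\infty)}h}_{L^2(\R)}\leq\norm{\chi_{(0,\infty)}h}_{H^{\frac{s+1}{3}}(\R)}\lsim\norm{h}_{H^{\frac{s+1}{3}}(\R^+)}$, the last step by Lemma \ref{extensions}.

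For the high range $\beta>1$, the factor $\frac{\beta^4}{\beta^3+|\xi|^3}\lsim\beta$ uniformly over $|\xi|\leq1$, so the $\xi$-integral contributes only a bounded constant and it remains to estimate $\norm{\la\tau\ra^\gamma\int_1^\infty\la\tau-\beta^3\ra^{-3}\beta\,|\wh h(\beta^3)|\,d\beta}_{L^2_\tau}$. Here I would use $\la\tau\ra^\gamma\lsim\la\tau-\beta^3\ra^\gamma\la\beta^3\ra^\gamma$ to split off the weight, leaving the kernel $\la\tau-\beta^3\ra^{\gamma-3}\in L^1$ (as $\gamma<2$) convolved against $\beta^{3\gamma+1}|\wh h(\beta^3)|$. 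The change of variables $\rho=\beta^3$ converts $\beta^{3\gamma+1}\,d\beta$ into $\tfrac13\rho^{\gamma-1/3}\,d\rho$, and Young's inequality (Lemma \ref{A.4}) then bounds the whole expression by $\norm{\rho^{\gamma-1/3}\wh h(\rho)}_{L^2_{\rho>1}}$. Since $\gamma\leq\frac12+\frac s3$ forces $\gamma-\frac13\leq\frac16+\frac s3<\frac{s+1}{3}$, we have $\rho^{\gamma-1/3}\lsim\la\rho\ra^{\frac{s+1}{3}}$ for $\rho>1$, so this is $\lsim\norm{\chi_{(0,\infty)}h}_{H^{\frac{s+1}{3}}(\R)}\lsim\norm{h}_{H^{\frac{s+1}{3}}(\R^+)}$, again by Lemma \ref{extensions}.

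Summing the two ranges completes the estimate. The argument is a direct adaptation of Proposition \ref{Boundary_Xsb}, so I do not anticipate a serious obstacle; the only genuinely delicate points are the near-origin integrability in the low range (the singular weight $\rho^{-1/6}$, tamed by Cauchy-Schwarz) and correctly matching the $V^\gamma$ weight $\la\tau\ra^\gamma$ to the boundary regularity $\frac{s+1}{3}$ through the $\rho=\beta^3$ scaling. In fact this matching only requires $\gamma\leq\frac{s+2}{3}$, so the stated range $\gamma\leq\frac12+\frac s3$ is satisfied with room to spare. The hypotheses $s\neq\frac12$ and $s<\frac72$ enter precisely in the final applications of Lemma \ref{extensions}: the former selects between parts (i) and (ii) (with $h(0)=0$ required when $s>\frac12$), while the latter ensures $\frac{s+1}{3}<\frac32$ so that Lemma \ref{extensions} applies; the exclusion $s\neq\frac32$ plays no role in this estimate and is retained only for consistency with Theorem \ref{thrm:main}.
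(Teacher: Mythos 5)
Your proposal is correct, but it takes a genuinely (and slightly sharper) different route than the paper's. The paper's proof is a two-line patch on Proposition \ref{Boundary_Xsb}: it observes $\|\eta W_1(h)\|_{V^\gamma}\leq\|\eta W_1(h)\|_{X^{0,\gamma}}$, thereby discarding the restriction to $|\xi|\leq1$, and re-runs the computation of Proposition \ref{Boundary_Xsb} with $b$ replaced by $\gamma$; the only step that breaks is the final line of the high-frequency case, where the weight $\rho^{\gamma-\frac16}$ appears and is absorbed into $\la\rho\ra^{\frac{s+1}{3}}$ exactly under the hypothesis $\gamma\leq\frac12+\frac{s}{3}$, with $\la\tau-\rho\ra^{\gamma-3}\in L^1_\tau$ secured by $s<\frac72$. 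You instead keep the $V^\gamma$ weight $\chi_{|\xi|\leq1}\la\tau\ra^\gamma$ throughout, which changes two things: on $|\xi|\leq1$ the kernel factor improves from the paper's $L^2_\xi$ bound $\beta^{\frac32+3\gamma}$ to the uniform pointwise bound $\frac{\beta^4}{\beta^3+|\xi|^3}\lsim\beta$, and you split the temporal weight as $\la\tau\ra^\gamma\lsim\la\tau-\beta^3\ra^\gamma\la\beta^3\ra^\gamma$ rather than routing it through $\la\tau-\xi^3\ra$. After $\rho=\beta^3$ this yields $\rho^{\gamma-\frac13}$ in place of the paper's $\rho^{\gamma-\frac16}$, so your argument establishes the corollary under the weaker constraint $\gamma\leq\frac{s+2}{3}$ --- your ``room to spare'' remark is accurate --- whereas the paper's version buys verbatim reuse of Proposition \ref{Boundary_Xsb} at the cost of the slightly stronger hypothesis. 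Your individual steps all check out: in the low range $\beta\leq1$ the equivalence $\la\tau\ra\sim\la\tau-\beta^3\ra$ and the integrability $\la\tau\ra^{\gamma-3}\in L^2_\tau$ (valid since $\gamma<\frac52$), Minkowski with the $\beta^{3/2}$ computation, and Cauchy--Schwarz against the integrable singularity $\rho^{-1/3}$; in the high range the $L^1$ kernel (valid since $\gamma<2$) and Young's inequality; and the closing applications of Lemma \ref{extensions}, where $s\neq\frac12$ and $s<\frac72$ are precisely the hypotheses consumed. You are also right that $s\neq\frac32$ is never used in this estimate; the paper's proof likewise makes no use of it.
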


\begin{proof}
We note that $\|\eta W_1(h)\|_{V^\gamma} \leq \|\eta W_1(h)\|_{X^{0,\gamma}}$. From here, all of the calculations from the proof of Proposition \ref{Boundary_Xsb} apply (with $b$ replaced by $\gamma$) up until the final line of the second case, where we no longer have $\gamma -\frac16 \leq \frac13$. That is, we now have
\begin{align*}
\|\eta W_1(h)\|_{V^\gamma} \leq \|\eta W_1(h)\|_{X^{0,\gamma}} \lsim \|h\|_{H^{\frac13}_t(\R^+)} + \left\|\int_0^\infty\la\tau-\rho\ra^{\gamma-3} \rho^{\gamma-\frac16}|\wh{h}(\rho)|d\rho\right\|_{L^2_\tau}.
\end{align*}
 
For $s>0$, we can bound the second term as follows
\begin{align*}
\left\|\int_0^\infty\la\tau-\rho\ra^{\gamma-3} \rho^{\gamma-\frac16}|\wh{h}(\rho)|d\rho\right\|_{L^2_\tau} &\lsim \left\|\int_0^\infty\la\tau-\rho\ra^{-\frac43} \la\rho\ra^{\frac{s+1}{3}}|\wh{h}(\rho)|d\rho\right\|_{L^2_\tau}\\
&\lsim \|\la \tau\ra^{-\frac43}\|_{L^2_\tau} \|\chi_{(0,\infty)} h\|_{H^{\frac{s+1}{3}}_t(\R)} \lsim \|h\|_{H^{\frac{s+1}{3}}_t(\R^+)}
\end{align*}
where we need $\gamma \leq \frac12+\frac{s}{3}$ so that $\gamma-\frac16 \leq \frac{s+1}{3}$. We also used the fact that $s<\frac72$, which implies $\gamma \leq \frac12+\frac{s}{3} < \frac53$ and thus $\gamma -3 <-\frac43$. 

\end{proof}

\section{Bilinear Estimates} \label{sec: bilinear} 
In order to close the fixed point argument, we must control the nonlinear terms, such as $\|F\|_{X^{s,-\frac12+}}$ and $\|F\|_{X^{\frac12+,\frac{s-2}{3}}}$ which arise in Proposition \ref{Duham_time_trace}. The choice of $-b$ and $\frac{2s-1}{6}-b = \frac{s-2}{3}+\left(\frac12-b\right)$ rather than $-\frac12+$ and $\frac{s-2}{3}$ in the estimates below is due to Lemma \ref{extractT}, so that we can extract a positive power of $T$. The detailed contraction argument is given later in Section \ref{subsec:local}. 

\begin{proposition} \label{prop:bil} \hfill \\
For $s>0$, $\max(\frac{3-s}{6},\frac{7}{16})<b<\frac12, \gamma>\frac12$,
\begin{align}
\|\del_x(v^2)\|_{X^{s,-b}} &\lsim \|v\|^2_{X^{s,b}_\alpha} \label{bil_1}\\
\| \del_x (uv) \|_{X^{s,-b}_\alpha} &\lsim \|u\|_{X^{s,b} \cap V^\gamma} \|v\|_{X^{s,b}_\alpha \cap V^\gamma} \label{bil_2}
\end{align}
For $\frac12<s<2$, $\max(\frac{s+1}{6},\frac{7}{16})<b<\frac12$, $\gamma > \frac12$,
\begin{align}
\|\del_x(v^2)\|_{X^{\frac12+,\frac{2s-1}{6}-b}} &\lsim \|v\|^2_{X^{s,b}_\alpha} \label{bil_3}\\
\| \del_x (uv) \|_{X^{\frac12+,\frac{2s-1}{6}-b}_\alpha} &\lsim \|u\|_{X^{s,b} \cap V^\gamma} \|v\|_{X^{s,b}_\alpha \cap V^{\gamma}} \label{bil_4}.
\end{align}
\end{proposition}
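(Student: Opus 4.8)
The plan is to prove all four bilinear estimates via the standard duality-and-Cauchy-Schwarz machinery for $X^{s,b}$ spaces, adapted to the two distinct dispersion curves $\tau=\xi^3$ and $\tau=\alpha\xi^3$. For each estimate I would dualize: writing the left-hand $X$-norm against a test function $w$ with $\|w\|_{L^2_{\xi\tau}}=1$, the problem reduces to bounding a trilinear convolution integral over the frequency domain $\xi=\xi_1+\xi_2$, $\tau=\tau_1+\tau_2$. After moving all weights $\la\xi\ra^s$, $\la\tau-\cdot\ra^{\pm b}$ onto the three factors, the goal becomes an $L^2$-based estimate in which the derivative $|\xi|$ in $\del_x$ must be absorbed. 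The central algebraic tool is the resonance identity for the modulation variables: setting $\sigma = \tau-\xi^3$ (resp. $\tau-\alpha\xi^3$) and the analogous $\sigma_1,\sigma_2$ for the two inputs, one computes the resonance function $\max(|\sigma|,|\sigma_1|,|\sigma_2|) \gsim |H(\xi_1,\xi_2)|$, where $H$ is the relevant cubic expression. For \eqref{bil_1} and \eqref{bil_3} both inputs live on the $\alpha$-curve while the output is on the $1$-curve, giving $H = \xi^3 - \alpha\xi_1^3 - \alpha\xi_2^3$; for \eqref{bil_2} and \eqref{bil_4} the inputs are on mixed curves ($1$ and $\alpha$) and the output on the $\alpha$-curve, giving $H = \alpha\xi^3 - \xi_1^3 - \alpha\xi_2^3$. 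Because $\alpha\neq 1$ the clean KdV factorization \eqref{alg_kdv} is unavailable, so I would factor these cubics directly and analyze where they can vanish, exactly the resonant regions Oh identifies in \cite{oh}.

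The core of the argument is a case analysis driven by which modulation is largest and by the relative sizes of the frequencies. First I would dispose of the high-modulation cases, where one of $\la\sigma\ra,\la\sigma_1\ra,\la\sigma_2\ra$ dominates and is $\gsim |H|$; here the gain from the modulation weight compensates the derivative loss $|\xi| \lsim \max(|\xi_1|,|\xi_2|)$ together with the resonance lower bound, and the remaining integral is closed by Cauchy-Schwarz after checking the relevant $\int \la\cdot\ra^{-\theta}$ integrals converge. The delicate regime is the (near-)resonant one where $H$ is small, which is where the restriction $0<\alpha<1$ and the threshold $b>\max(\tfrac{3-s}{6},\tfrac{7}{16})$ enter: one must show that the set of frequencies on which $H$ nearly vanishes is thin enough (a measure/Jacobian estimate, changing variables by $\rho\sim H$) that the $L^2$ bound survives down to the claimed regularity. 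For the low-frequency interactions, where $|\xi_1|$ or $|\xi_2|\lesssim 1$, the bilinear estimates \eqref{bil_2} and \eqref{bil_4} are precisely where the auxiliary $V^\gamma$ norm is needed, since the $X^{s,b}$ weight with $b<\tfrac12$ gives no control there; I would invoke $\|u\|_{V^\gamma}$ to supply the missing $\la\tau\ra^\gamma$ control on the $|\xi|\leq 1$ piece, mirroring the Colliander–Kenig device from \cite{kenig}.

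The estimates \eqref{bil_3} and \eqref{bil_4} differ from \eqref{bil_1} and \eqref{bil_2} only in the output weight: the target space is $X^{\frac12+,\frac{2s-1}{6}-b}$ rather than $X^{s,-b}$, so the output carries regularity $\tfrac12+$ in $\xi$ but a \emph{more negative} modulation exponent $\tfrac{2s-1}{6}-b$ (note $\tfrac{2s-1}{6}-b = \tfrac{s-2}{3}+(\tfrac12-b)$, matching the second term in Proposition \ref{Duham_time_trace}). I would handle these by the same dualization, now trading a loss of $\la\xi\ra^{\frac12+ - s}$ on the output against a correspondingly larger gain of $\la\sigma\ra^{b-\frac{2s-1}{6}}$ from the weaker modulation weight; the resonance identity is unchanged, so the case analysis is structurally identical, and the numerology reduces to checking that the modulation gain dominates in each case for $s<2$ and $b>\tfrac{s+1}{6}$. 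Throughout I would rely on the elementary calculus lemmas (of the type labelled \ref{A.2}, \ref{A.4}) to evaluate the one-dimensional weight integrals after the change of variables $\rho=\xi^3$.

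The main obstacle, and the step requiring the most care, is the resonant analysis for $0<\alpha<1$: unlike the KdV case, the cubic $H$ can vanish on nontrivial curves in $(\xi_1,\xi_2)$-space, and I would need sharp control of the Jacobian of the change of variables near these curves to certify that the near-resonant contribution is bounded by the product norms without losing derivatives. I expect this measure estimate — ensuring the resonant set is sufficiently small precisely when $b>\max(\tfrac{3-s}{6},\tfrac{7}{16})$ and $s>0$ — to be the crux of the proof, with everything else being a careful but routine bookkeeping of weights.
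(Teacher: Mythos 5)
Your skeleton matches the paper's: dualize against an $L^2$ test function, use the resonance factorizations (your two cubics $H$ agree with \eqref{max_id} and \eqref{max_id2}, with the roots $r_1,r_2$), split into near-resonant regions $\xi\approx r_i\xi_1$ (resp.\ $\xi \approx r_i^{-1}\xi_1$) versus dominant-modulation regions, invoke $V^\gamma$ for the low-frequency input pieces in \eqref{bil_2} and \eqref{bil_4}, and trade the output loss $\la\xi\ra^{\frac12+-s}$ against the modulation gain $\la\sigma\ra^{b-\frac{2s-1}{6}}$ for \eqref{bil_3} and \eqref{bil_4}. But there is a genuine gap at the step you treat as routine: you assert that the dominant-modulation cases ``are closed by Cauchy-Schwarz after checking the relevant $\int\la\cdot\ra^{-\theta}$ integrals converge.'' For $b<\frac12$ this fails at high frequency: after Cauchy-Schwarz and Lemmas \ref{A.2}--\ref{A.3} one is left with a supremum of the shape $|\xi|^{3/2}\la\tau-\xi^3\ra^{-2b}\la\tau-\frac{\alpha}{4}\xi^3\ra^{-(4b-\frac32)}$, which is bounded uniformly in $\xi$ only when $b>\frac12$ --- that is exactly how Oh closes the full-line argument, and the paper notes explicitly that it breaks on the half line, where Proposition \ref{Duham_time_trace} forces $b<\frac12$. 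The paper's Case 2C of \eqref{bil_1} (and Cases 2--3 of \eqref{bil_2}) are instead closed with the mixed-norm linear estimates: the $L^4_xL^4_t$ Strichartz bound \eqref{Strichartz_4} with its derivative cap $|\xi|^\theta$, $\theta\leq\frac18$, the $L^2_xL^\infty_t$ bound \eqref{Sobolev} for the $V^\gamma$ factor, and the Kato-type $L^p_xL^2_t$ bound \eqref{Kato_p}. None of these appear anywhere in your plan, and without them your high-modulation and low-frequency cases cannot be closed in the stated range of $b$; this is precisely the new difficulty of the half-line problem relative to \cite{oh}.

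Relatedly, you locate the crux in the wrong place. The near-resonant regions $A$ and $B$ require no delicate measure/Jacobian analysis: the paper handles them by an elementary Cauchy-Schwarz supremum argument (taken in the dual variables $\xi,\tau$ rather than $\xi_1,\tau_1$), using only that $r_1,r_2\neq 1$ for $0<\alpha<1$, so the weight $\la\xi-\xi_1\ra^{2s}\sim\la(1-r_i)\xi_1\ra^{2s}$ is nondegenerate on the resonant set, together with the one-dimensional bounds of Lemmas \ref{A.2}--\ref{A.3}; this is where the thresholds $b>\frac{3-s}{6}$ and $b>\frac{5}{12}$ arise. The threshold $b\geq\frac{7}{16}$, which you attribute to a resonant-set measure estimate, in fact comes from the \emph{non}-resonant case of \eqref{bil_2} and \eqref{bil_4}: there a leftover derivative $|\xi_2|^{1-2b}$ must be absorbed by \eqref{Strichartz_4}, forcing $1-2b\leq\frac18$. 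Your proposal would become a proof essentially by inserting the three mixed-norm estimates at the dominant-modulation and low-frequency steps; as written, those steps fail.
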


These estimates are the most technical ingredient in establishing well-posedness, as their proofs rely on the specific structure of our system \eqref{majdaIBVP}. We will also need several mixed Lebesgue norm estimates due to Kenig-Ponce-Vega, which we list in the following lemma. 

\begin{lemma} Lemmas 2.2, 2.4, \& 2.6 in \cite{Kenig93}.
\begin{equation}
\left \| \left[ \frac{\wh{f}(\xi,\tau)}{\la\tau\ra^{\frac12+}}\right]^\vee\right\|_{L^2_xL^\infty_t} \lsim \|\wh{f}\|_{L^2_{\xi,\tau}} \label{Sobolev}
\end{equation}

\begin{equation}
\left\|\left[\frac{|\xi|^{\theta}\wh{f}(\xi,\tau)}{\la\tau-\xi^3\ra^b}\right]^\vee\right\|_{L^4_xL^4_t} \lsim \| \wh{f}\|_{L^2_{\xi,\tau}} \quad \text{for }\, 0\leq \theta\leq \frac18, \; b>\frac38 \label{Strichartz_4}
\end{equation}

\begin{equation} \label{Kato_p}
\left\|\left[ \frac{|\xi|^{\frac{p-2}{p}}f(\xi,\tau)}{\la\tau-\xi^3\ra^{\frac{p-2}{2p}+}}\right]^\vee\right\|_{L^p_xL^2_t} \lsim \|f\|_{L^2_{\xi,\tau}} \qquad 2 < p < \infty.
\end{equation}
\end{lemma}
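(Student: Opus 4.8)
The three displayed inequalities are classical mixed space--time bounds for the Airy propagator $W^t = e^{-t\del_{xxx}}$, and the plan is to quote them from \cite{Kenig93}; for a self-contained treatment I would reconstruct each by reducing the stated $X^{0,b}$-type bound to a \emph{homogeneous} estimate for the free evolution and then transferring. The common engine is the transference principle: writing $\lambda = \tau-\xi^3$, any space--time $u$ decomposes as
$$u(x,t) = \int_{\R} e^{it\lambda}\,(W^t F_\lambda)(x)\,d\lambda, \qquad \wh{F_\lambda}(\xi) = \wh{u}(\xi,\lambda+\xi^3),$$
so that for any modulation-invariant mixed norm $Y$ (such as $L^4_xL^4_t$ or $L^p_xL^2_t$) one has $\|u\|_Y \le \int \|W^t F_\lambda\|_Y\, d\lambda$, and Cauchy--Schwarz against $\la\lambda\ra^{-2b}$ converts a homogeneous bound $\|W^t u_0\|_Y \lsim \|u_0\|_{L^2}$ into $\|u\|_Y \lsim \|u\|_{X^{0,b}}$ for $b>\tfrac12$.

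Estimate \eqref{Sobolev} uses no dispersion and I would prove it directly. For fixed $x$, the time function whose $t$-Fourier transform is $\la\tau\ra^{-\frac12-}\,[\wh{f}(\cdot,\tau)]^\vee_\xi(x)$ lies in $H^{\frac12+}_t$ with norm $\|[\wh f(\cdot,\tau)]^\vee_\xi(x)\|_{L^2_\tau}$, so the $1$D embedding $H^{\frac12+}_t\hookrightarrow L^\infty_t$ bounds its $L^\infty_t$ norm by that same quantity; squaring, integrating in $x$, and applying Plancherel in $\xi$ for each fixed $\tau$ (with Fubini) gives $\lsim\|\wh f\|_{L^2_{\xi\tau}}$.

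For \eqref{Strichartz_4} I would start from the homogeneous $L^4_{xt}$ Strichartz estimate for the Airy group (with a derivative gain), proved by the $TT^*$ method: the relevant space--time kernel is an oscillatory integral with cubic phase whose pointwise decay follows from van der Corput / stationary-phase bounds, after which Hardy--Littlewood--Sobolev gives the $L^{4/3}\to L^4$ mapping. Transference then yields the $X^{0,b}$ form for $b>\tfrac12$, but the advertised range $b>\tfrac38$ (with $\theta\le\tfrac18$) lies below $\tfrac12$ and must come from the sharper bilinear route: $\|u\|_{L^4_{xt}}^2 = \|u^2\|_{L^2_{xt}} = \|\widehat{u\,u}\|_{L^2_{\xi\tau}}$, after which expanding the convolution and estimating the measure of the resonant set $\{\tau=\tau_1+\tau_2,\ \xi=\xi_1+\xi_2\}$ via the Jacobian $\del_{\xi_1}(\xi_1^3+\xi_2^3)=3(\xi_1^2-\xi_2^2)$ produces exactly the derivative budget $\theta\le\tfrac18$ and modulation threshold $b>\tfrac38$.

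For \eqref{Kato_p} I would use the homogeneous local smoothing estimate already recorded in \eqref{Kato_smooth}, $\|\del_x W^t u_0\|_{L^\infty_x L^2_t}\lsim\|u_0\|_{L^2}$, which after transference is the $p=\infty$ endpoint (one full derivative gained, modulation $b=\tfrac12+$), together with the Plancherel identity $\|u\|_{L^2_{xt}}=\|u\|_{X^{0,0}}$ at $p=2$. Complex (Stein) interpolation in $p$ of the analytic family carrying $|\xi|^{z}$ and $\la\tau-\xi^3\ra^{-z/2}$ between these two endpoints delivers the full range $2<p<\infty$ with the sharp gain $\tfrac{p-2}{p}$ and modulation $\tfrac{p-2}{2p}+$ at once. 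The main obstacle in the whole lemma is the sub-$\tfrac12$ modulation exponent in \eqref{Strichartz_4}: naive transference only reaches $b>\tfrac12$, interpolating \eqref{Sobolev} against \eqref{Kato_p} produces a mixed weight $\la\tau\ra^{-}\la\tau-\xi^3\ra^{-}$ rather than the clean $\la\tau-\xi^3\ra^{-b}$, and crossing below $\tfrac12$ with the clean weight genuinely requires the bilinear $L^2$ computation and its resonance-set volume estimate, which is the technical heart of the Kenig--Ponce--Vega argument; since the statement is quoted verbatim from \cite{Kenig93}, one may alternatively simply cite it.
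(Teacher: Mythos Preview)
Your treatments of \eqref{Sobolev} and \eqref{Kato_p} coincide with the paper's: \eqref{Sobolev} is just the one-dimensional Sobolev embedding $H^{\frac12+}_t\hookrightarrow L^\infty_t$ applied pointwise in $x$, and \eqref{Kato_p} is obtained by interpolating Kato smoothing \eqref{Kato_smooth} (the $p=\infty$ endpoint) against Plancherel (the $p=2$ endpoint).

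For \eqref{Strichartz_4} you take a genuinely different route. The paper derives it by interpolating the $L^6_xL^6_t$ KdV Strichartz estimate (which after transference carries weight $\la\tau-\xi^3\ra^{-\frac12-}$ and a derivative gain of order $\tfrac16$) against the Plancherel identity; with interpolation parameter $\tfrac34$ this drops the modulation exponent from $\tfrac12+$ to $\tfrac38+$ and the derivative gain from $\tfrac16$ to $\tfrac18$, giving exactly the stated range. You instead square the $L^4_{xt}$ norm, pass to $\|\wh{u\cdot u}\|_{L^2_{\xi\tau}}$, and estimate the convolution by a resonance-set volume bound using the Jacobian $3(\xi_1^2-\xi_2^2)$. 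Both arguments are correct; the paper's interpolation is shorter and explains transparently why the thresholds $b>\tfrac38$, $\theta\le\tfrac18$ arise as three-quarters of the $L^6$ numbers, while your bilinear computation is more self-contained and does not require first establishing the $L^6$ estimate. Your remark that interpolating \eqref{Sobolev} against \eqref{Kato_p} would produce a mixed $\la\tau\ra^{-}\la\tau-\xi^3\ra^{-}$ weight is accurate, but the paper avoids this simply by interpolating against Plancherel rather than against \eqref{Sobolev}.
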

\noindent \eqref{Sobolev} follows directly from the Sobolev embedding inequality. \eqref{Strichartz_4} is established by interpolating between the $L^6L^6$ KdV Strichartz estimate and the Plancharel identity.  \eqref{Kato_p} follows from interpolation between the Kato smoothing inequality \eqref{Kato_smooth} and the Plancharel identity.

\begin{rmk} \label{rmk:Fourier_scaling}
 To apply these estimates when the denominator involves $\la\tau-\alpha \xi^3\ra$ instead of $\la\tau-\xi^3\ra$, we can use Fourier scaling, as in the following example:
\begin{align*}
\mathcal{F}^{-1}\left[\frac{\wh{f_2}(\xi,\tau)}{\la\tau-\alpha\xi^3\ra^b}\right](x,t) = \frac{1}{\alpha^{1/3}}\,\mathcal{F}^{-1}\left[\frac{\wh{f_2}(\frac{\xi}{\alpha^{1/3}},\tau)}{\la\tau-\xi^3\ra^b}\right]\left(\frac{x}{\alpha^{1/3}},t\right)
\end{align*}
Then by scaling in $L^4_x$ and (\ref{Strichartz_4}), we have
\begin{align*}
\left\|\left[\frac{\wh{f_2}(\xi,\tau)}{\la\tau-\alpha\xi^3\ra^b}\right]^\vee\right\|_{L^4_xL^4_t} &= \frac{\alpha^{1/12}}{\alpha^{1/3}} \left\|\left[\frac{\wh{f_2}(\frac{\xi}{\alpha^{1/3}},\tau)}{\la\tau-\xi^3\ra^b}\right]^\vee\right\|_{L^4_xL^4_t}\\
&\lsim \alpha^{-1/4}\left\|\wh{f_2}\left(\frac{\xi}{\alpha^{1/3}},\tau\right)\right\|_{L^2_{\xi,\tau}} = \alpha^{-1/12} \|\wh{f_2}\|_{L^2_{\xi,\tau}}.
\end{align*}
\end{rmk}

\subsection{Proof of \eqref{bil_1}}
\begin{proof}
We aim to show $$\|\del_x(v^2)\|_{X^{s,-b}} \lsim \|v\|^2_{X^{s,b}_\alpha}$$  by duality, so we define
\begin{equation} \label{duality} \int \limits_{\substack{\xi = \xi_1+\xi_2 \\ \tau = \tau_1+\tau_2}} \frac{|\xi| \la\xi\ra^s \wh{g}(\xi,\tau)}{\la\tau-\xi^3\ra^b} \frac{\la\xi_1\ra^{-s}\wh{f_1}(\xi_1,\tau_1)}{\la\tau_1-\alpha\xi_1^3\ra^b} \frac{\la\xi_2\ra^{-s}\wh{f_2}(\xi_2,\tau_2)}{\la\tau_2-\alpha\xi_2^3\ra^b} \, d\xi d\xi_1 d\tau d\tau_1\end{equation}

where 
\begin{align*}
\wh{g}(\xi,\tau) &= \la\xi\ra^{-s}\la\tau-\xi^3\ra^b|\wh{w}(\xi,\tau)| \qquad w \in X^{-s,b} \\
\wh{f_i}(\xi_i,\tau_i) &= \la\xi_i\ra^s\la\tau_i-\alpha\xi_i^3\ra^b |\wh{v}(\xi_i,\tau_i)| \qquad i=1,2.
\end{align*}

We remark that this choice of Fourier multipliers makes the integrand in \eqref{duality} real and non-negative. To prove \eqref{bil_1} it suffices to bound \eqref{duality} by $\|g\|_{L^2_{x,t}} \|f_1\|_{L^2_{x,t}} \|f_2\|_{L^2_{x,t}}$.

We begin with the approach from \cite{oh} used to establish the corresponding bilinear estimate on the full line, namely a standard Cauchy-Schwarz argument. Since we must take $b<\frac12$ on the half line (because of Proposition \ref{Duham_time_trace}) however, this turns out to be less successful. It gives the desired bound only in case where $|\xi|$ is small.\\

\textit{Case 1  \quad $|\xi| \lsim 1$}\\
By the Cauchy-Schwarz inequality in $\xi_1$ and $\tau_1$, and then Young's inequality (Lemma \ref{A.4}),  we can bound \eqref{duality} as follows:
\begin{align*}
\eqref{duality} &\leq \int \frac{|\xi| \la\xi\ra^s \wh{g} (\xi,\tau)}{\la\tau-\xi^3\ra^b} \left[\int \limits_{\substack{\xi = \xi_1+\xi_2 \\ \tau = \tau_1+\tau_2}} \frac{\la\xi_1\ra^{-2s} \la\xi_2\ra^{-2s}}{\la\tau_1-\alpha\xi_1^3\ra^{2b}\la\tau_2-\alpha\xi_2^3\ra^{2b}} d\xi_1 d\tau_1 \right]^{\frac12} \left(|\wh{f_1}|^2 * |\wh{f_2}|^2\right)^{\frac12} d\xi d\tau \\
& \leq \left[\sup_{\xi,\tau} \frac{|\xi|^2 \la\xi\ra^{2s}}{\la\tau-\xi^3\ra^{2b}} \int \limits_{\substack{\xi = \xi_1+\xi_2 \\ \tau = \tau_1+\tau_2}} \frac{\la\xi_1\ra^{-2s} \la\xi_2\ra^{-2s}}{\la\tau_1-\alpha\xi_1^3\ra^{2b}\la\tau_2-\alpha\xi_2^3\ra^{2b}} d\xi_1 d\tau_1 \right]^{\frac12} \int |\wh{g}(\xi,\tau)| \left(|\wh{f_1}|^2 * |\wh{f_2}|^2\right)^{\frac12} d\xi d\tau\\
& \leq \left[\sup_{\xi,\tau} \frac{|\xi|^2 \la\xi\ra^{2s}}{\la\tau-\xi^3\ra^{2b}} \int \limits_{\substack{\xi = \xi_1+\xi_2 \\ \tau = \tau_1+\tau_2}} \frac{\la\xi_1\ra^{-2s} \la\xi_2\ra^{-2s}}{\la\tau_1-\alpha\xi_1^3\ra^{2b}\la\tau_2-\alpha\xi_2^3\ra^{2b}} d\xi_1 d\tau_1 \right]^{\frac12} \|\wh{g}\|_{L^2_{\xi,\tau}} \, \||\wh{f_1}|^2 * |\wh{f_2}|^2 \|_{L^1_{\xi,\tau}}^{\frac12}\\
& \leq \left[\sup_{\xi,\tau} \frac{|\xi|^2 \la\xi\ra^{2s}}{\la\tau-\xi^3\ra^{2b}} \int \limits_{\substack{\xi = \xi_1+\xi_2 \\ \tau = \tau_1+\tau_2}} \frac{\la\xi_1\ra^{-2s} \la\xi_2\ra^{-2s}}{\la\tau_1-\alpha\xi_1^3\ra^{2b}\la\tau_2-\alpha\xi_2^3\ra^{2b}} d\xi_1 d\tau_1 \right]^{\frac12} \|\wh{g}\|_{L^2_{\xi,\tau}} \|\wh{f_1}\|_{L^2_{\xi,\tau}} \|\wh{f_2}\|_{L^2_{\xi,\tau}}.
\end{align*}

Then by the Plancherel identity, it suffices to bound the supremum:
\begin{equation} \label{dual_M}
\sup_{\xi,\tau} \frac{|\xi|^2 \la\xi\ra^{2s}}{\la\tau-\xi^3\ra^{2b}} \int \limits_{\substack{\xi = \xi_1+\xi_2 \\ \tau = \tau_1+\tau_2}} \frac{\la\xi_1\ra^{-2s} \la\xi_2\ra^{-2s}}{\la\tau_1-\alpha\xi_1^3\ra^{2b}\la\tau_2-\alpha\xi_2^3\ra^{2b}} d\xi_1 d\tau_1
\end{equation}

by a constant.
 
Since $\la\xi\ra = \la\xi_1+\xi_2\ra \lsim \la \xi_1\ra \la \xi_2\ra$, we have
\begin{align*}
\eqref{dual_M} &= \sup_{\xi,\tau} \frac{|\xi|^2}{\la\tau-\xi^3\ra^{2b}} \int \limits_{\xi =\xi_1+\xi_2} \frac{\la\xi\ra^{2s}}{\la\xi_1\ra^{2s}\la\xi_2\ra^{2s}}\frac{d\tau_1 d\xi_1}{\la\tau_1-\alpha \xi_1^3\ra^{2b} \la\tau-\tau_1-\alpha\xi_2^3\ra^{2b}} \\
& \lsim \sup_{\xi,\tau} \frac{|\xi|^2}{\la\tau-\xi^3\ra^{2b}} \int \limits_{\xi = \xi_1+\xi_2}
\frac{d\tau_1 d\xi_1}{\la\tau_1 -\alpha \xi_1^3\ra^{2b} \la \tau-\tau_1-\alpha\xi_2^3\ra^{2b}} \\
&\lsim \sup_{\xi,\tau} \frac{|\xi|^2}{\la\tau-\xi^3\ra^{2b}} \int \frac{d\xi_1}{\la \tau-\alpha\xi_1^3 - \alpha(\xi-\xi_1)^3\ra^{4b-1}}
\end{align*}
where we've used Lemma \ref{A.2} in the last line. Now, by a change of variables
\begin{align*}
\eta &= \tau-\alpha\xi_1^3 - \alpha(\xi-\xi_1)^3 = -3\alpha\xi \xi_1^2 +3 \alpha \xi^2 \xi_1 +\tau-\alpha \xi^3\\
\xi_1 &= \frac{-3\alpha\xi^2 \pm \sqrt{(3\alpha \xi)(4\tau-4\eta-\alpha\xi^3)}}{-6\alpha\xi} \\
d\eta &= (-6\alpha\xi\xi_1+3\alpha\xi^2) d\xi_1 = \pm \sqrt{(3\alpha\xi)(4\tau-4\eta-\alpha\xi^3)} d\xi_1.
\end{align*}
Then Lemma \ref{A.3} (with $\frac38<b<\frac12$) gives the desired bound:
\begin{align*}
\eqref{dual_M} &\lsim \sup_{\xi,\tau} \frac{|\xi|^2}{\la\tau-\xi^3\ra^{2b}} \int \frac{d\eta}{\sqrt{3\alpha |\xi|} \la\eta\ra^{4b-1}\sqrt{|\eta-\tau + \frac{\alpha}{4}\xi^3|}} \\
& \lsim \sup_{\xi,\tau} \frac{|\xi|^\frac32}{\la\tau-\xi^3\ra^{2b}\la\tau-\frac{\alpha}{4}\xi^3\ra^{4b-\frac32}} \lsim 1 \qquad (\text{since } |\xi|\lsim 1 \text{ in Case 1)}.
\end{align*}

If we could take $b>\frac12$ in the last line, the supremum would be bounded regardless of the size of $|\xi|$ since we always have either $|\tau-\xi^3| \gsim |\xi|^3$ or $|\tau-\frac{\alpha}{4}\xi^3|\gsim |\xi|^3$. This is how the proof concludes in \cite{oh} for the bilinear estimate on the full line. However, we need $b<\frac12$ for the IBVP on $\R^+$, so we must consider further cases.\\

For Case 2 ($|\xi| \gg 1$), the following resonance calculation will be useful
\begin{equation}
\tau-\xi^3 - \tau_1+\alpha \xi_1^3 -\tau_2 + \alpha \xi_2^3 =  (\alpha-1)\xi (\xi - r_1 \xi_1) (\xi - r_2 \xi_1) \label{max_id}
\end{equation}
where 
$$r_1 = \frac{3\alpha - \sqrt{3 \alpha (4-\alpha)}}{2(\alpha-1)},  \quad
r_2 = \frac{3\alpha + \sqrt{3 \alpha (4-\alpha)}}{2(\alpha-1)}.$$

Resonances occur when \eqref{max_id} is small. This suggests we consider the cases $|\xi|\lsim 1$, $\xi \sim r_1 \xi_1$, and $\xi \sim r_2 \xi_1$. We have already established the bound for the first case above. We now define these remaining cases more explicitly. Since $|\frac{r_2}{r_1}|>1$  (and $0<r_1<1$ for $0<\alpha<1$), we may choose $1 < c <\sqrt{|\frac{r_2}{r_1}|}$. 
Then define
$$A := \{\xi,\xi_1: c^{-1}|r_1 \xi_1|<|\xi|<c|r_1 \xi_1|\} \text{ and } 
B := \{\xi,\xi_1: c^{-1}|r_2 \xi_1|<|\xi|<c|r_2 \xi_1|\}.$$

Our choice of $c$ ensures these sets are disjoint. On $A^c$, we have $|\xi-r_1 \xi_1| \geq (1-c^{-1})|\xi|$. And likewise on $B^c$, we have $|\xi-r_2\xi_1|\geq (1-c^{-1})|\xi|$. This leads us to three subcases: $A$, $B$, and $C = A^c \cap B^c$.\\

\textit{Case 2A} ($|\xi| \gg 1$ and $\xi,\xi_1 \in A$)\\
For this subcase, we bound $\eqref{duality}$ as in Case 1 using Cauchy-Schwarz, but in $\xi$ and $\tau$ this time. It therefore suffices to bound

\begin{equation} \label{dual_M1}
\sup_{\xi_1,\tau_1} \frac{\la\xi_1\ra^{-2s}}{\la\tau_1-\alpha\xi_1^3\ra^{2b}} \int \limits_{\substack{\xi=\xi_1+\xi_2 \\ \tau= \tau_1+\tau_2}} \frac{|\xi|^2\la\xi\ra^{2s}}{\la\xi_2\ra^{2s}} \frac{1}{\la\tau-\xi^3\ra^{2b} \la\tau_2-\alpha\xi_2^3\ra^{2b}}d\tau d\xi
\end{equation}
by an absolute constant.\\

Using Lemma \ref{A.2}, \eqref{max_id}, and the fact that $|\xi|\lsim |\xi_1|$ on $A$, we have
\begin{align*}
\eqref{dual_M1} &\stackrel{\phantom{\la a \ra\la b\ra\gsim \la a-b \ra}}{\lsim} \sup_{\xi_1,\tau_1} \int \limits_{\xi = \xi_1+\xi_2} \frac{|\xi|^2}{\la\xi_2\ra^{2s}} \frac{d\xi}{\la\tau_1-\alpha\xi_1^3\ra^{2b} \la\tau_1 - \alpha \xi_1^3 + (\alpha-1)\xi(\xi-r_1\xi_1)(\xi-r_2\xi_1)\ra^{4b-1}} \\
&\stackrel{\la a \ra\la b\ra\gsim \la a-b \ra}{\lsim} \sup_{\xi_1,\tau_1}\int \frac{|\xi|^2}{\la\xi-\xi_1\ra^{2s}}\frac{d\xi}{ \la\xi(\xi-r_1\xi_1)(\xi-r_2\xi_1)\ra^{4b-1}} \\
&\stackrel{\phantom{\la a \ra\la b\ra\gsim \la a-b \ra}}{\lsim} \sup_{\xi_1,\tau_1}\int \frac{|\xi|^2}{\la\xi-\xi_1\ra^{2s}}\frac{d\xi}{\la\xi^2(\xi-r_1\xi_1)\ra^{4b-1}}.
\end{align*}

From here, we separate cases based on the size of $|\xi-r_1\xi_1|$, keeping in mind that $0<r_1<1$. We have
\begin{align*}
\int \limits_{|\xi-r_1\xi_1|>1} \frac{|\xi|^2}{\la\xi-\xi_1\ra^{2s}}\frac{d\xi}{\la\xi^2(\xi-r_1\xi_1)\ra^{4b-1}} &\lsim \int \limits_{|\xi-r_1\xi_1|>1} \frac{|\xi|^2}{\la\xi-\xi_1\ra^{2s} \la\xi\ra^{8b-2}\la\xi-r_1\xi_1\ra^{4b-1}}d\xi \\
&\lsim|\xi_1|^{8(\frac12-b)}\int\frac{d\xi}{\la\xi-\xi_1\ra^{2s}\la\xi-r_1\xi_1\ra^{4b-1}} \\
&\lsim |\xi_1|^{8(\frac12-b)}\frac{1}{\la(1-r_1)\xi_1\ra^{2\min{(s,1/2)}+4b-2}} \lsim 1
\end{align*}
where we need $6b>3-s$ for the last inequality if $s<\frac12$, and $b>\frac{5}{12}$ if $s>\frac12$. To apply Lemma \ref{A.2}, we needed $2b>1-s$, but this is weaker than the requirement that $6b>3-s$. \\

For the subcase with $|\xi-r_1\xi_1|\leq 1$, since $|\xi|\lsim |\xi_1|$ on $A$ we have
\begin{align*}
\int \limits_{|\xi-r_1\xi_1|\leq1} \frac{|\xi|^2}{\la\xi-\xi_1\ra^{2s}}\frac{d\xi}{\la\xi^2(\xi-r_1\xi_1)\ra^{4b-1}} &\lsim \int \limits_{|\xi-r_1\xi_1|\leq1}\frac{|\xi|^2}{\la(1-r_1)\xi_1\ra^{2s} |\xi|^{8b-2}|\xi-r_1\xi_1|^{4b-1}} d\xi \\
&\lsim \frac{|\xi_1|^{4-8b}}{\la(1-r_1)\xi_1\ra^{2s}} \int \limits_{|\xi-r_1\xi_1|\leq1} \frac{d\xi}{|\xi-r_1\xi_1|^{4b-1}} \lsim 1.
\end{align*}
We need $s>2-4b$ for the last inequality, but again this follows from $s>3-6b$.\\

\textit{Case 2B} ($|\xi| \gg 1$ and $\xi,\xi_1 \in B$)\\
For $0<\alpha<1$ we have $r_2 < 0$. In particular $r_2 \neq 1$, so the argument from Case 2A carries over, but with the roles of $r_1$ and $r_2$ reversed.\\

\textit{Case 2C} ($|\xi| \gg 1$ and $\xi,\xi_1 \in C = A^c \cap B^c$)\\
We separate into further subcases depending on which factor dominates in the right side of \eqref{max_id}. Define
$$M:= \max{\left( |\tau-\xi^3|, |\tau_1 - \alpha \xi_1^3|, |\tau_2-\alpha\xi_2^3| \right)}.$$
Then by \eqref{max_id}, together with the inequalities $|\xi-r_1\xi_1|\gsim|\xi|$ and $|\xi - r_2 \xi_1|\gsim |\xi|$ (which hold for $\xi,\xi_1 \in C$), we have 
\begin{equation} \label{max_lb}
M \geq \frac{|\alpha-1|}{3}|\xi||\xi-r_1\xi_1||\xi-r_2\xi_1| \gsim |\xi|^3.
\end{equation}

If $M = |\tau -\xi^3|$, we have
\begin{align*}
\eqref{duality} &\lsim \int \limits_{\substack{\xi=\xi_1+\xi_2 \\ \tau= \tau_1+\tau_2}} \frac{\la\xi\ra^{s}}{\la\xi_1\ra^s\la\xi_2\ra^s} \frac{|\xi| \wh{g}(\xi,\tau)}{\la\xi\ra^{3b}} \frac{\wh{f_1}(\xi_1,\tau_1)}{\la\tau_1 -\alpha\xi_1^3\ra^b}\frac{\wh{f_2}(\xi_2,\tau_2)}{\la\tau_2 -\alpha\xi_2^3\ra^b} \, d\xi d\xi_1 d\tau d\tau_1 \\
&\lsim \int \limits_{\substack{\xi=\xi_1+\xi_2 \\ \tau= \tau_1+\tau_2}}\wh{g}(\xi,\tau) \frac{\wh{f_1}(\xi_1,\tau_1)}{\la\tau_1 -\alpha\xi_1^3\ra^b}\frac{\wh{f_2}(\xi_2,\tau_2)}{\la\tau_2 -\alpha\xi_2^3\ra^b} \, d\xi_1 d\tau_1 \;d\xi d\tau\\
&= \int g(x,t) \, \left[\frac{\wh{f_1}}{\la\tau-\alpha \xi^3\ra^b} \right]^\vee \, \left[\frac{\wh{f_2}}{\la\tau-\alpha \xi^3\ra^b} \right]^\vee \, dx dt\\
&\lsim \|g\|_{L^2_{x,t}} \left\|\left[\frac{\wh{f_1}}{\la\tau -\alpha\xi^3\ra^b}\right]^\vee\right\|_{L^4_xL^4_t} \left\|\left[\frac{\wh{f_2}}{\la\tau -\alpha\xi^3\ra^b}\right]^\vee\right\|_{L^4_xL^4_t} 
\end{align*}
where we've used $s\geq 0$, $b>\frac13$, $\la\xi\ra \lsim \la\xi_1\ra\la\xi_2\ra$, then Parseval's identity and H\"older's inequality. Finally, since $b>\frac38$, \eqref{Strichartz_4} gives the desired bound. \\

If $M = |\tau_1-\alpha \xi_1^3|$ or $M = |\tau_2-\alpha\xi_2^3|$, the argument is virtually identical. We may replace the $\la M\ra$ with $\la \xi\ra^3$ in the denominator and cancel. Then we apply H\"older's inequality, using (\ref{Strichartz_4}) for the two remaining factors with denominators. 
\end{proof}

\subsection{Proof of \eqref{bil_3}}

We postpone the proof of (\ref{bil_2}) for the moment and prove (\ref{bil_3}) next because it has the same resonances as (\ref{bil_1}).

\begin{proof}
We must show
$$\|\del_x(v^2)\|_{X^{\frac12+,\frac{2s-1}{6}-b}} \lsim \|v\|^2_{X^{s,b}_\alpha}.$$ 
Arguing by duality as before, we must now bound the quantity

\begin{equation} \label{duality*}
\int \limits_{\substack{\xi = \xi_1+\xi_2 \\ \tau = \tau_1+\tau_2}} \frac{|\xi| \la\xi\ra^{\frac12+} \wh{g}(\xi,\tau)}{\la\tau-\xi^3\ra^{b-\frac{2s-1}{6}}} \frac{\la\xi_1\ra^{-s}\wh{f_1}(\xi_1,\tau_1)}{\la\tau_1-\alpha\xi_1^3\ra^b} \frac{\la\xi_2\ra^{-s}\wh{f_2}(\xi_2,\tau_2)}{\la\tau_2-\alpha\xi_2^3\ra^b} \, d\xi d\xi_1 d\tau d\tau_1 
\end{equation}
by  $\|g\|_{L^2} \|f_1\|_{L^2} \|f_2\|_{L^2}$. Note that for $s<2$, we have $b-\frac{2s-1}{6}>b-\frac{s+1}{6}$, which is assumed to be positive.

\textit{Case 1} ($|\xi|\lsim 1$)\\
The Cauchy-Schwarz argument as in the proof of (\ref{bil_1}) carries over:
\begin{align*}
\eqref{duality*} &:= \sup_{\xi,\tau} \frac{|\xi|^2}{\la\tau-\xi^3\ra^{2b-\frac{2s-1}{3}}}
	\int_{\xi=\xi_1+\xi_2} \frac{\la\xi\ra^{1+}}{\la\xi_1\ra^{2s}\la
	\xi_2\ra^{2s}} \frac{d\tau_1 d\xi_1}{\la\tau_1-\alpha\xi_1^3\ra^{2b}
	\la\tau-\tau_1-\alpha\xi_2^3\ra^{2b}} \\
&\lsim \sup_{\xi,\tau} \frac{|\xi|^2}{\la\tau-\xi^3\ra^{2b-\frac{2s-1}{3}}}
	\int_{\xi=\xi_1+\xi_2} \frac{ d\xi_1}
	{\la\tau-\alpha\xi_1^3-\alpha\xi_2^3\ra^{4b-1}}\\
&\lsim \sup_{\xi,\tau} \frac{1}{\la\tau-\xi^3\ra^{2b-\frac{2s-1}{3}}\la
	\tau-\frac{\alpha}{4}\xi^3\ra^{4b-\frac32}} \lsim 1
\end{align*}
for $\frac12+<s<2$, $b>\frac38$, and $b>\frac{2s-1}{6}$. In the last line we've used the change of variables and Lemma \ref{A.3} just as in Case 1 of the proof of (\ref{bil_1}).\\

For Case 2 ($|\xi|\gg1$), we use the same subcases as in the proof of \eqref{bil_1}, again based on the sets $$ A = \{\xi,\xi_1: c^{-1}|r_1 \xi_1|<|\xi|<c|r_1 \xi_1|\} \text{ and } 
B = \{\xi,\xi_1: c^{-1}|r_2 \xi_1|<|\xi|<c|r_2 \xi_1|\}.$$

\textit{Case 2A}($|\xi|\gg1$, and $\xi,\xi_1 \in A$)\\
Following the proof of (\ref{bil_1}), we apply Cauchy-Schwarz in $\xi$ and $\tau$. It therefore suffices to bound 
\begin{equation} \label{CS3}
 \sup_{\xi_1,\tau_1} \frac{1}{\la\xi_1\ra^{2s}\la\tau_1-\alpha\xi_1^3\ra^{2b}} \int_{\xi = \xi_1+\xi_2} \frac{|\xi|^2\la\xi\ra^{1+}d\tau \,d\xi}{\la\xi_2\ra^{2s}\la\tau-\xi^3\ra^{2b-\frac{2s-1}{3}}\la\tau-\tau_1-\alpha\xi_2^3\ra^{2b}} 
\end{equation}
by a constant. Since $|\xi|\sim |\xi_1|$ on $A$, we have
\begin{align*}
\eqref{CS3}&\lsim \sup_{\xi_1,\tau_1} \frac{\la\xi_1\ra^{3+}}{\la\xi_1\ra^{2s}\la\tau_1-\alpha\xi_1^3\ra^{2b}} \int \frac{d\xi}{\la\xi-\xi_1\ra^{2s} \la\tau_1-\alpha\xi_1^3+(\alpha-1)\xi(\xi-r_1\xi_1)(\xi-r_2\xi_1)\ra^{4b-\frac{2s+2}{3}}}\\
&\lsim \sup_{\xi_1,\tau_1} \frac{\la\xi_1\ra^{3+}}{\la\xi_1\ra^{2s}\la\tau_1-\alpha\xi_1^3\ra^{\frac{2s+2}{3}-2b}} \int \frac{d\xi}{\la\xi-\xi_1\ra^{2s}\la\xi^2(\xi-r_1\xi_1)\ra^{4b-\frac{2s+2}{3}}}\\
&\lsim \sup_{\xi_1}\,  \la\xi_1\ra^{3-2s+} \int \frac{d\xi}{\la\xi-\xi_1\ra^{2s}\la\xi^2(\xi-r_1\xi_1)\ra^{4b-\frac{2s+2}{3}}} \qquad \text{(since }s>\frac12>b)
\end{align*}
where we needed $b>\frac{s+1}{6}$ to apply Lemma \ref{A.2} in the first line. We now consider separate subcases for $|\xi-r_1\xi_1|>1$ and $|\xi-r_1\xi_1|\leq1$. For the first subcase,
\begin{align*}
&\sup_{\xi_1}\,  \la\xi_1\ra^{3-2s+} \int_{|\xi-r_1\xi_1|>1} 
	\frac{d\xi}{\la\xi-\xi_1\ra^{2s}(\la\xi^2\ra \la
	\xi-r_1\xi_1\ra)^{4b-\frac{2s+2}{3}}}\\
\lsim &\sup_{\xi_1} \, \la\xi_1\ra^{\frac{13-2s}{3}-8b+}
	\int \frac{d\xi}{\la\xi-\xi_1\ra^{2s}
	\la\xi-r_1\xi_1\ra^{4b-\frac{2s+2}{3}}}\\
\lsim &\sup_{\xi_1} \, \frac{\la\xi_1\ra^{\frac{13-2s}{3}-8b+}}{
	\la(1-r_1)\xi_1\ra^{4b-\frac{2s+2}{3}}}
\lsim \sup_{\xi_1} \, \la\xi_1\ra^{5-12b+} \lsim 1
\end{align*}
for $b>\frac{5}{12}$. \\

For the other subcase where $|\xi-r_1\xi_1|\leq1$
\begin{align*}
&\sup_{\xi_1} \, \la\xi_1\ra^{3-2s+} \int_{|\xi-r_1\xi_1|\leq1} \frac{d\xi}{\la\xi-\xi_1\ra^{2s}|\xi^2(\xi-r_1\xi_1)|^{4b-\frac{2s+2}{3}}}\\
\sim &\sup_{\xi_1}\, \frac{\la\xi_1\ra^{3-2s+}}{\la(1-r_1)\xi_1\ra^{2s}} \int_{|\xi-r_1\xi_1|\leq1} \frac{d\xi}{ (\la\xi\ra^2|\xi-r_1\xi_1|)^{4b-\frac{2s+2}{3}}}\\
\sim &\sup_{\xi_1} \, \la\xi_1\ra^{\frac13(13-8s-24b)+}\int_{|\xi-r_1\xi_1|\leq1} \frac{d\xi}{|\xi-r_1\xi_1|^{4b-\frac{2s+2}{3}}}\\
\lsim &\sup_{\xi_1} \, \la\xi_1\ra^{\frac13(13-8s-24b)+} \lsim 1
\end{align*}
for $b>\frac{13-8s}{24}$. For $\frac12<s<2$, this is weaker than the $b>\frac{5}{12}$ requirement of the previous subcase. We also used the fact that $s>\frac12>b>\frac{s+1}{6}$ implies $0\leq 4b-\frac{2s+2}{3} <1$.

\textit{Case 2B} ($\xi\gg1$, and $\xi$,$\xi_1 \in B$)\\
The same argument applies with the roles of $r_1$ and $r_2$ interchanged. Recall $r_2 \neq 1$ for $0<\alpha <1$. \\

\textit{Case 2C} ($\xi\gg1$, and $\xi,\xi_1 \in A^c \cap B^c$)\\
Just as in the proof of (\ref{bil_1}), we use (\ref{max_lb}) and consider separate subcases based on the maximum denominator $M$.\\

If $M = |\tau-\xi^3| \gsim |\xi|^3$, 
\begin{align*}
\eqref{duality*} &\lsim\quad \int\limits_{\substack{\xi = \xi_1+\xi_2 \\ \tau = \tau_1+\tau_2}} \frac{|\xi|\la\xi\ra^{\frac12+}\wh{g}(\xi,\tau)}{\la\xi\ra^{\frac12-s+3b}} \frac{\la\xi_1\ra^{-s}\wh{f_1}(\xi_1,\tau_1)}{\la\tau_1-\alpha\xi_1^3\ra^b} \frac{\la\xi_2\ra^{-s}\wh{f_2}(\xi_2,\tau_2)}{\la\tau_2-\alpha\xi_2^3\ra^b}d\xi d\xi_1 d\tau d\tau_1\\
&\lsim \int\limits_{\substack{\xi = \xi_1+\xi_2 \\ \tau = \tau_1+\tau_2}} \wh{g} \frac{\la\xi_1\ra^{1+s-3b+}\wh{f_1}}{\la\xi_1\ra^s\la\tau_1-\alpha\xi_1^3\ra^b} \frac{\la\xi_2\ra^{1+s-3b+}\wh{f_2}}{\la\xi_2\ra^s\la\tau_2-\alpha\xi_2^3\ra^b} d\xi d\xi_1 d\tau d\tau_1 \qquad \text{(since } \la \xi \ra \lsim \la\xi_1\ra \la \xi_2\ra)\\
&\lsim \int\limits_{\substack{\xi = \xi_1+\xi_2 \\ \tau = \tau_1+\tau_2}} \wh{g} \frac{\wh{f_1}}{\la\tau_1-\alpha\xi_1^3\ra^b} \frac{\wh{f_2}}{\la\tau_2-\alpha\xi_2^3\ra^b} d\xi d\xi_1 d\tau d\tau_1\\
&\lsim \|\wh{g}\|_{L^2_{\xi,\tau}} \left\|\left[\frac{\wh{f_1}}{\la\tau -\alpha\xi^3\ra^b}\right]^\vee\right\|_{L^4_xL^4_t} \left\|\left[\frac{\wh{f_2}}{\la\tau -\alpha\xi^3\ra^b}\right]^\vee\right\|_{L^4_xL^4_t}\\
& \lsim \|g\|_{L^2} \|f_1\|_{L^2} \|f_2\|_{L^2}
\end{align*}
where we need $b>\frac13$ for the third line, and then $b>\frac38$ to use (\ref{Strichartz_4}) in the last line.

If $M = |\tau_1-\alpha\xi_1^3|\gsim |\xi|^3$, we start with $M\geq |\tau-\xi^3|$ to adjust the denominators
$$\la\tau_1-\alpha\xi_1^3\ra^b  = \la\tau_1-\alpha\xi_1^3\ra^{b-\frac{2s-1}{6}}\la\tau_1-\alpha\xi_1^3\ra^{\frac{2s-1}{6}} \geq \la\tau_1-\alpha\xi_1^3\ra^{b-\frac{2s-1}{6}}\la\tau-\xi^3\ra^{\frac{2s-1}{6}}.$$
After this tradeoff, we apply (\ref{max_lb}) and proceed exactly as above
\begin{align*}
\eqref{duality*} &=\int \limits_{\substack{\xi = \xi_1+\xi_2 \\ \tau = \tau_1+\tau_2}} \frac{|\xi|\la\xi\ra^{\frac12+}}{\la\xi_1\ra^s\la\xi_2\ra^s} \frac{\wh{g}(\xi,\tau)}{\la\tau-\xi^3\ra^{b-\frac{2s-1}{6}}} \frac{\wh{f_1}(\xi_1,\tau_1)}{\la\tau_1-\alpha\xi_1^3\ra^b} \frac{\wh{f_2}(\xi_2,\tau_2)}{\la\tau_2-\alpha\xi_2^3\ra^b} d\xi d\xi_1 d\tau d\tau_1\\
&\lsim \int \limits_{\substack{\xi = \xi_1+\xi_2 \\ \tau = \tau_1+\tau_2}} \la\xi\ra^{\frac32-s+} \frac{\wh{g}}{\la\tau-\xi^3\ra^b} \frac{\wh{f_1}}{\la\tau_1-\alpha\xi_1^3\ra^{b-\frac{2s-1}{6}}} \frac{\wh{f_2}}{\la\tau_2-\alpha\xi_2\ra^b} d\xi d\xi_1 d\tau d\tau_1\\
&\lsim \int \limits_{\substack{\xi = \xi_1+\xi_2 \\ \tau = \tau_1+\tau_2}} \la\xi\ra^{\frac32-s+} \frac{\wh{g}}{\la\tau-\xi^3\ra^b} \frac{\wh{f_1}}{\la\xi\ra^{\frac12-s+3b}} \frac{\wh{f_2}}{\la\tau_2-\alpha\xi_2\ra^b} d\xi d\xi_1 d\tau d\tau_1\\
& \lsim \int\limits_{\substack{\xi = \xi_1+\xi_2 \\ \tau = \tau_1+\tau_2}}\frac{\wh{g}}{\la\tau-\xi^3\ra^b}\; \wh{f_1}\; \frac{\wh{f_2}}{\la\tau_2-\alpha\xi_2^3\ra^b} d\xi d\xi_1 d\tau d\tau_1\\
&\lsim  \left\|\left[\frac{\wh{g}}{\la\tau -\xi^3\ra^b}\right]^\vee\right\|_{L^4_xL^4_t} \|\wh{f_1}\|_{L^2_{\xi,\tau}}\left\|\left[\frac{\wh{f_2}}{\la\tau -\alpha\xi^3\ra^b}\right]^\vee\right\|_{L^4_xL^4_t}\\
& \lsim \|g\|_{L^2_{\xi,\tau}} \|f_1\|_{L^2_{\xi,\tau}} \|f_2\|_{L^2_{\xi,\tau}}.
\end{align*}

Finally, if $M = |\tau_2-\alpha\xi_2^3|$ the argument is identical, but with the 1 and 2 subscripts interchanged.  
\end{proof}
\subsection{Proof of \eqref{bil_2}}
\begin{proof}
We must show
$$\| \del_x (uv) \|_{X^{s,-b}_\alpha} \lsim \|u\|_{X^{s,b} \cap V^\gamma} \|v\|_{X^{s,b}_\alpha \cap V^\gamma}.$$

As with (\ref{bil_1}), we argue by duality. We must bound
\begin{equation} \label{duality**}
 \int \limits_{\substack{\xi = \xi_1+\xi_2 \\ \tau = \tau_1+\tau_2}} \frac{|\xi|\la\xi\ra^s\wh{g}(\xi,\tau)}{\la\tau-\alpha\xi^3\ra^b} \frac{\wh{f_1}(\xi_1,\tau_1)}{\beta_1(\xi_1,\tau_1)} \frac{\wh{f_2}(\xi_2,\tau_2)}{\beta_\alpha(\xi_2,\tau_2)} d\xi d\xi_1 d\tau d\tau_1\end{equation}
by $\|g\|_{L^2_{x,t}} \|f_1\|_{L^2_{x,t}} \|f_2\|_{L^2_{x,t}}$, where 
\begin{align*}
\beta_\alpha (\xi_i,\tau_i) &= \la\xi_i\ra^s \la \tau_i - \alpha \xi_i^3\ra^b + \chi_{|\xi_i|\leq 1} \la \tau_i\ra^{\gamma}.\\
\end{align*}

The argument is similar to the proof of (\ref{bil_1}), but not identical due to the different resonant cases and low-frequency considerations. Before exploring the resonant cases however, we can immediately dispense with the easiest case where $|\xi|\lsim 1$ using \eqref{Strichartz_4}. 

\textit{Case 1 ($|\xi|\lsim 1$)}
\begin{align*}
\eqref{duality**} &\lsim \int \limits_{\substack{\xi = \xi_1+\xi_2 \\ \tau = \tau_1+\tau_2}} \wh{g}(\xi,\tau) \frac{\wh{f_1}(\xi_1,\tau_1)}{\la\tau_1-\xi_1^3\ra^b} \frac{\wh{f_2}(\xi_2,\tau_2)}{\la\tau_2-\alpha \xi_2^3\ra^b} d\xi d\xi_1 d\tau d\tau_1\\
&\lsim \|\wh{g}\|_{L^2_{\xi,\tau}} \left\|\left[\frac{\wh{f_1}}{\la\tau -\xi^3\ra^b}\right]^\vee\right\|_{L^4_xL^4_t} \left\|\left[\frac{\wh{f_2}}{\la\tau -\alpha\xi^3\ra^b}\right]^\vee\right\|_{L^4_xL^4_t}\\
& \lsim \|g\|_{L^2_{x,t}} \|f_1\|_{L^2_{x,t}} \|f_2\|_{L^2_{x,t}}.
\end{align*}

We now turn to the resonant cases. Instead of (\ref{max_id}), we have the algebraic relation
\begin{equation}
\tau_1-\xi_1^3 +\tau_2-\alpha\xi_2^3-\tau+\alpha\xi^3 = (\alpha-1) r_1 r_2 \xi_1 \left(\xi-\frac{1}{r_1} \xi_1\right) \left(\xi-\frac{1}{r_2}\xi_1\right) \label{max_id2}
\end{equation}
and thus
\begin{equation}
M_2 := \max{(|\tau-\alpha\xi^3|, |\tau_1-\xi_1^3|, |\tau_2-\alpha\xi_2^3|)} \gsim |\xi_1|\left|\xi-\frac{1}{r_2}\xi_1\right|\left|\xi-\frac{1}{r_1}\xi_1\right| \label{max_lb2}
\end{equation}
where $r_1$ and $r_2$ are the same roots as in (\ref{max_id})
$$r_1 = \frac{3\alpha - \sqrt{3 \alpha (4-\alpha)}}{2(\alpha-1)},  \quad
r_2 = \frac{3\alpha + \sqrt{3 \alpha (4-\alpha)}}{2(\alpha-1)}.$$

\eqref{max_lb2} leads us to consider the case where $|\xi_1|\lsim 1$, which was not necessary in the earlier proofs of \eqref{bil_1} and \eqref{bil_3}. We may also define our remaining cases by choosing $1<c<\sqrt{|\frac{r_2}{r_1}|}$ and taking
$$A' := \{\xi,\xi_1: c^{-1}|r_1^{-1}\xi_1|<|\xi|<c|r_1^{-1}\xi_1|\} \text{ and } 
B' := \{\xi,\xi_1: c^{-1}|r_2^{-1}\xi_1|<|\xi|<c|r_2^{-1}\xi_1|\}$$
so that these sets are disjoint, and we have $|\xi-r_1^{-1}\xi_1|\gsim |\xi|$ on $(A')^c$ and $|\xi-r_2^{-1}\xi_1|\gsim |\xi|$ on $(B')^c$. Let $C':= (A')^c \cap (B')^c$. \\

\textit{Case 2 ($|\xi_1|\lsim 1 \ll |\xi|$)}\\
Here we require the low frequency modification introduced by Kenig and Colliander for the KdV equation on the half line
$$\|u\|_{V^\gamma} = \|\la \tau\ra^\gamma \chi_{(-1,1)}(\xi)\wh{u}
	(\xi,\tau) \|_{L^2_{\xi,\tau}} \qquad \gamma > \frac12.$$
In this case, since $\la\xi\ra = \la\xi_1+\xi_2\ra \sim \la\xi_2\ra$ we have
\begin{align*}
\eqref{duality**} &\lsim \int \limits_{\substack{\xi = \xi_1+\xi_2 \\ \tau = \tau_1+\tau_2}} \frac{|\xi|\la\xi\ra^s\wh{g}(\xi,\tau)}{\la\tau-\alpha\xi^3\ra^b} \frac{\wh{f_1}(\xi_1,\tau_1)}{\la\tau_1\ra^\gamma} \frac{\wh{f_2}(\xi_2,\tau_2)}{\la\xi_2\ra^s \la\tau_2-\alpha\xi_2^3\ra^b} d\xi d\xi_1 d\tau d\tau_1 \\
&\lsim \int \limits_{\substack{\xi = \xi_1+\xi_2 \\ \tau = \tau_1+\tau_2}} \frac{|\xi|^\frac12\wh{g}(\xi,\tau)}{\la\tau-\alpha\xi^3\ra^b} \frac{\wh{f_1}(\xi_1,\tau_1)}{\la\tau_1\ra^\gamma} \frac{|\xi_2|^\frac12\wh{f_2}(\xi_2,\tau_2)}{\la\tau_2-\alpha\xi_2^3\ra^b} d\xi d\xi_1 d\tau d\tau_1\\
&\lsim \left\|\left[\frac{|\xi|^\frac12\wh{g}(\xi,\tau)}{\la\tau-\alpha\xi^3\ra^b}\right]^\vee\right\|_{L^4_xL^2_t}
\left\|\left[\frac{\wh{f_1}(\xi,\tau)}{\la\tau\ra^\gamma}\right]^\vee\right\|_{L^2_xL^\infty_t}
\left\|\left[\frac{|\xi|^\frac12\wh{f_2}(\xi,\tau)}{\la\tau-\alpha\xi^3\ra^b}\right]^\vee\right\|_{L^4_xL^2_t}\\
&\lsim \|g\|_{L^2_{x,t}} \|f_1\|_{L^2_{x,t}} \|f_2\|_{L^2_{x,t}}
\end{align*}
where we've used \eqref{Sobolev} and \eqref{Kato_p} with $p=4$. Note that $\frac{4-2}{2(4)} = \frac14<b$ as required.\\

\textit{Case 3} ($|\xi_2|\lsim 1\ll |\xi|$)\\
Repeat the previous argument, but with the low frequency term in $\xi_2, \tau_2$ instead of $\xi_1, \tau_1$.\\

\textit{Case 4A} ($|\xi|,|\xi_1|, |\xi_2|\gg1$, and $\xi, \xi_1 \in A'$)\\
 As in Case 2A in the proof of $(\ref{bil_1})$, we apply Cauchy-Schwarz in $\xi$ and $\tau$, so it is enough to bound

\begin{equation} \label{dual_M1**}
\sup_{\xi_1,\tau_1} \frac{\la\xi_1\ra^{-2s}}{\la\tau_1-\xi_1^3\ra^{2b}} \int \limits_{\substack{\xi=\xi_1+\xi_2 \\ \tau= \tau_1+\tau_2}} \frac{|\xi|^2\la\xi\ra^{2s}}{\la\xi_2\ra^{2s}} \frac{1}{\la\tau-\alpha\xi^3\ra^{2b} \la\tau_2-\alpha\xi_2^3\ra^{2b}}d\tau d\xi
\end{equation}
by an absolute constant. Then, since $|\xi|\lsim |\xi_1|$ on $A'$, and by Lemma \ref{A.2}
\begin{align*}
\eqref{dual_M1**}&\lsim \sup_{\xi_1,\tau_1} \int \limits_{\xi = \xi_1+\xi_2} \frac{|\xi|^2}{\la\xi_2\ra^{2s}} \frac{d\xi}{\la\tau_1-\xi_1^3\ra^{2b} \la\tau_1 - \xi_1^3 +\xi_1^3+\alpha\xi_2^3-\alpha\xi^3\ra^{4b-1}}\\
&\lsim \sup_{\xi_1,\tau_1}\int \frac{|\xi|^2}{\la\xi-\xi_1\ra^{2s}}\frac{d\xi}{ \la\xi_1(\xi-r_1^{-1}\xi_1)(\xi-r_2^{-1}\xi_1)\ra^{4b-1}} \\
&\lsim \sup_{\xi_1,\tau_1}\int \frac{|\xi|^2}{\la\xi-\xi_1\ra^{2s}}\frac{d\xi}{\la\xi^2(\xi-r_1^{-1}\xi_1)\ra^{4b-1}} \qquad \text{since }A'\subset (B')^c\\
&\lsim 1.
\end{align*}
The last line follows exactly from the estimates in Case 2A of $(\ref{bil_1})$, but with $r_1$ replaced by $r_1^{-1}$. Recall that $r_1 \neq 1$ for $0<\alpha<1$.\\

\textit{Case 4B} ($|\xi|,|\xi_1|,|\xi_2|\gg1$, and $\xi,\xi_1 \in B'$)\\
Repeat the previous subcase with the roles of $r_1$ and $r_2$ reversed. \\

\textit{Case 4C} ($|\xi|,|\xi_1|,|\xi_2| \gg 1$, and $\xi, \xi_1 \in C'$)\\
The argument here is similar to Case 2C of (\ref{bil_1}). For $\xi,\xi_1 \in C'$, we have $|\xi-r_1^{-1} \xi_1|\gsim |\xi|$ and $|\xi-r_2^{-1} \xi_1|\gsim |\xi|$. Therefore, \eqref{max_lb2} implies $M_2 \gsim |\xi_1| |\xi|^2$. \\

We separate subcases based on $M_2$. For example, if $M_2 = |\tau-\alpha \xi^3|$, then
\begin{align*}
\eqref{duality**} &\lsim \int \limits_{\substack{\xi = \xi_1+\xi_2 \\ \tau = \tau_1+\tau_2}} \frac{|\xi|\wh{g}(\xi,\tau)}{\la\tau-\alpha\xi^3\ra^b} \frac{\wh{f_1}(\xi_1,\tau_1)}{\la\tau_1-\xi_1^3\ra^b} \frac{\wh{f_2}(\xi_2,\tau_2)}{\la\tau_2-\alpha\xi_2^3\ra^b} d\xi d\xi_1 d\tau d\tau_1 \\
&\lsim \int \limits_{\substack{\xi = \xi_1+\xi_2 \\ \tau = \tau_1+\tau_2}} \frac{\la\xi\ra\wh{g}(\xi,\tau)}{\la \xi_1\ra^b\la\xi\ra^{2b}} \frac{\wh{f_1}(\xi_1,\tau_1)}{\la\tau_1-\xi_1^3\ra^b} \frac{\wh{f_2}(\xi_2,\tau_2)}{\la\tau_2-\alpha\xi_2^3\ra^b} d\xi d\xi_1 d\tau d\tau_1 \\
&\lsim \int \limits_{\substack{\xi = \xi_1+\xi_2 \\ \tau = \tau_1+\tau_2}} \wh{g}(\xi,\tau) \frac{\la\xi_1\ra^{1-3b}\wh{f_1}(\xi_1,\tau_1)}{\la\tau_1-\xi_1^3\ra^b} \frac{\la\xi_2\ra^{1-2b}\wh{f_2}(\xi_2,\tau_2)}{\la\tau_2-\alpha\xi_2^3\ra^b} d\xi d\xi_1 d\tau d\tau_1 \qquad \text{(since } \la\xi\ra\lsim \la\xi_1\ra\la\xi_2\ra)\\
&\lsim \int \limits_{\substack{\xi = \xi_1+\xi_2 \\ \tau = \tau_1+\tau_2}} \wh{g}(\xi,\tau) \frac{\wh{f_1}(\xi_1,\tau_1)}{\la\tau_1-\xi_1^3\ra^b} \frac{|\xi_2|^{1-2b}\wh{f_2}(\xi_2,\tau_2)}{\la\tau_2-\alpha\xi_2^3\ra^b} d\xi d\xi_1 d\tau d\tau_1 \qquad \text{(since } |\xi_2|\sim \la\xi_2\ra)  \\
&\lsim \|\wh{g}\|_{L^2_{\xi,\tau}}
\left\|\left[\frac{\wh{f_1}(\xi,\tau)}{\la\tau-\xi^3\ra^b}\right]^\vee\right\|_{L^4_xL^4_t}
\left\|\left[\frac{|\xi|^{1-2b}\wh{f_2}(\xi,\tau)}{\la\tau-\alpha\xi^3\ra^b}\right]^\vee\right\|_{L^4_xL^4_t} \lsim  \|g\|_{L^2_{x,t}} \|f_1\|_{L^2_{x,t}} \|f_2\|_{L^2_{x,t}} 
\end{align*}
by (\ref{Strichartz_4}), with $b\geq\frac{7}{16}$ so that $1-2b\leq \frac18$.\\

Similarly, for $M_2 = |\tau_1-\xi_1^3|$, 
\begin{align*}
\eqref{duality**} &\lsim \int \limits_{\substack{\xi = \xi_1+\xi_2 \\ \tau = \tau_1+\tau_2}} \frac{|\xi|\wh{g}(\xi,\tau)}{\la\tau-\alpha\xi^3\ra^b} \frac{\wh{f_1}(\xi_1,\tau_1)}{\la\xi_1\ra^b\la\xi\ra^{2b}} \frac{\wh{f_2}(\xi_2,\tau_2)}{\la\tau_2-\alpha\xi_2^3\ra^b} d\xi d\xi_1 d\tau d\tau_1 \\
&\lsim \int \limits_{\substack{\xi = \xi_1+\xi_2 \\ \tau = \tau_1+\tau_2}} \frac{|\xi|^{1-2b}\wh{g}(\xi,\tau)}{\la\tau-\alpha\xi^3\ra^b} \wh{f_1}(\xi_1,\tau_1) \frac{\wh{f_2}(\xi_2,\tau_2)}{\la\tau_2-\alpha\xi_2^3\ra^b} d\xi d\xi_1 d\tau d\tau_1 \lsim \|\wh{g}\|_{L^2_{\xi,\tau}}\|\wh{f_1}\|_{L^2_{\xi,\tau}}\|\wh{f_2}\|_{L^2_{\xi,\tau}}.
\end{align*}

The proof is virtually identical if $M_2 = |\tau_2-\alpha\xi_2^3|$. 
\end{proof}
\subsection{Proof of \eqref{bil_4}}
\begin{proof}
We must show
$$\| \del_x (uv) \|_{X^{\frac12+,\frac{2s-1}{6}-b}_\alpha} \lsim \|u\|_{X^{s,b} \cap V^\gamma} \|v\|_{X^{s,b}_\alpha \cap V^{\gamma}}.$$
By duality, it is enough to bound 
\begin{equation} \label{duality***}
\int \limits_{\substack{\xi = \xi_1+\xi_2 \\ \tau = \tau_1+\tau_2}} \frac{|\xi| \la\xi\ra^{\frac12+} \wh{g}(\xi,\tau)}{\la\tau-\alpha\xi^3\ra^{b-\frac{2s-1}{6}}} \frac{\wh{f_1}(\xi_1,\tau_1)}{\beta_1(\xi_1,\tau_1)}\frac{\wh{f_2}(\xi_2,\tau_2)}{\beta_\alpha(\xi_2,\tau_2)} d\xi d\xi_1 d\tau d\tau_1
\end{equation}
by $\|g\|_{L^2_{x,t}} \|f_1\|_{L^2_{x,t}} \|f_2\|_{L^2_{x,t}}$. We follow the same cases as in the proof of (\ref{bil_2}).\\ 

\textit{Case 1} ($|\xi|\lsim 1$)\\
Recall that $b>\frac{s+1}{6}>\frac{2s-1}{6}$ since $s<2$. Then
\begin{align*}
\eqref{duality***} &\lsim \int \limits_{\substack{\xi = \xi_1+\xi_2 \\ \tau = \tau_1+\tau_2}} \wh{g}(\xi,\tau) \frac{\wh{f_1}(\xi_1,\tau_1)}{\la\tau_1-\xi_1^3\ra^b} \frac{\wh{f_2}(\xi_2,\tau_2)}{\la\tau_2-\alpha \xi_2^3\ra^b} d\xi d\xi_1 d\tau d\tau_1\\
&\lsim \|\wh{g}\|_{L^2_{\xi,\tau}} \left\|\left[\frac{\wh{f_1}}{\la\tau -\xi^3\ra^b}\right]^\vee\right\|_{L^4_xL^4_t} \left\|\left[\frac{\wh{f_2}}{\la\tau -\alpha\xi^3\ra^b}\right]^\vee\right\|_{L^4_xL^4_t}\\
& \lsim \|g\|_{L^2_{x,t}} \|f_1\|_{L^2_{x,t}} \|f_2\|_{L^2_{x,t}}
\end{align*}
exactly as before.\\

\textit{Case 2} ($|\xi_1|\lsim 1\ll |\xi|\sim|\xi_2|$)\\
We can proceed as before, but with a different choice of $p$ in \eqref{Kato_p}. Recall $\frac12<s<2$ and $b> \frac{s+1}{6}>\frac{2s-1}{6}$, so that
\begin{align*}
\eqref{duality***} &\lsim \int \limits_{\substack{\xi = \xi_1+\xi_2 \\ \tau = 
	\tau_1+\tau_2}} \frac{|\xi|\la\xi\ra^{\frac12+} \wh{g}(\xi,\tau)}{\la\tau-\alpha\xi^3\ra^
	{b-\frac{2s-1}{6}}} \frac{\wh{f_1}(\xi_1,\tau_1)}{\la\tau_1\ra^\gamma}
	\frac{\wh{f_2}(\xi_2,\tau_2)}{\la\xi_2\ra^s \la\tau_2-\alpha\xi_2^3\ra^b} 
	d\xi_1 d\xi d\tau_1 d\tau \\
&\lsim \int \limits_{\substack{\xi = \xi_1+\xi_2 \\ \tau = 
	\tau_1+\tau_2}} \frac{\la\xi\ra^{2b-\frac{2s-1}{3}-} \wh{g}(\xi,\tau)}
	{\la\tau-\alpha\xi^3\ra^{b-\frac{2s-1}{6}}}\frac{\wh{f_1}(\xi_1,\tau_1)}
	{\la\tau_1\ra^\gamma}\frac{\la\xi_2\ra^{1+\frac{2s-1}{3}-2b+}\wh{f_2}
	(\xi_2,\tau_2)}{\la\tau_2-\alpha\xi_2^3\ra^b} 
	d\xi_1 d\xi d\tau_1 d\tau
\end{align*}

\begin{align*}
&\lsim \left\|\left[\frac{|\xi|^{2b-\frac{2s-1}{3}-}\wh{g}(\xi,\tau)}
	{\la\tau-\alpha\xi^3\ra^{b-\frac{2s-1}{6}}}\right]^\vee\right\|_{L^
	{\frac{3}{s+1-3b}-}_xL^2_t}\\
&\qquad \qquad \left\|\left[\frac{\wh{f_1}(\xi,\tau)}{\la\tau\ra^\gamma}
	\right]^\vee\right\|_{L^2_xL^\infty_t}\left\|\left[\frac{|\xi|^
	{\frac{2s+2}{3}-2b+}\wh{f_2}(\xi,\tau)}{\la\tau-\alpha\xi^3\ra^b}\right]^
	\vee\right\|_{L^{\frac{6}{6b-2s+1}+}_xL^2_t}\\
&\lsim \|g\|_{L^2_{x,t}} \|f_1\|_{L^2_{x,t}} \|f_2\|_{L^2_{x,t}}.
\end{align*}

\textit{Case 3} ($|\xi_2|\lsim 1\ll |\xi|\sim|\xi_1|$)\\
Repeat the above argument, but with the low frequency term in $\xi_2,\tau_2$ instead of $\xi_1,\tau_1$.\\

\textit{Case 4A} ($|\xi|,|\xi_1|,|\xi_2|\gg1$, and $\xi,\xi_1 \in A'$)\\
We proceed as in Case 4A in the proof of (\ref{bil_3}). By Cauchy-Schwarz in $\xi$ and $\tau$, it is enough to bound
\begin{equation} \label{M1***}
 \sup_{\xi_1,\tau_1} \frac{1}{\la\xi_1\ra^{2s}\la\tau_1-\xi_1^3\ra^{2b}} \int_{\xi = \xi_1+\xi_2} \frac{|\xi|^2\la\xi\ra^{1+} d\tau d\xi}{\la\xi_2\ra^{2s} \la\tau-\alpha\xi^3\ra^{2b-\frac{2s-1}{3}} \la\tau-\tau_1-\alpha\xi_2^3\ra^{2b}}
\end{equation}
by an absolute constant. Recall that $|\xi|\lsim |\xi_1|$ on $A'$. Then
\begin{align*}
\eqref{M1***}&\lsim \sup_{\xi_1,\tau_1} \frac{\la\xi_1\ra^{3+}}{\la\xi_1\ra^{2s}\la\tau_1-\xi_1^3\ra^{2b}} \int_{\xi = \xi_1+\xi_2} \frac{d\xi}{\la\xi_2\ra^{2s} \la\tau_1-\xi_1^3+\xi_1^3+\alpha\xi_2^3-\alpha\xi^3\ra^{4b-\frac{2s+2}{3}}} \\
&\lsim \sup_{\xi_1,\tau_1} \frac{\la\xi_1\ra^{3+}}{\la\xi_1\ra^{2s}\la\tau_1-\xi_1^3\ra^{\frac{2s+2}{3}-2b}} \int \frac{d\xi}{\la\xi-\xi_1\ra^{2s} \la \xi_1(\xi-r_1^{-1}\xi_1)(\xi-r_2^{-1}\xi_1)\ra^{4b-\frac{2s+2}{3}}} \\
&\lsim \sup_{\xi_1,\tau_1} \la\xi_1\ra^{3-2s+} \int \frac{d\xi}{\la\xi-\xi_1\ra^{2s} \la \xi^2(\xi-r_1^{-1}\xi_1)\ra^{4b-\frac{2s+2}{3}}}\,.
\end{align*}
This is the supremum from Case 2A of the proof of (\ref{bil_3}), but with $r_1$ replaced by $r_1^{-1}$, and we've already shown this supremum to be bounded by a constant.\\

\textit{Case 4B} ($|\xi|,|\xi_1|,|\xi_2|\gg1$, and $\xi,\xi_1 \in B'$)\\
Repeat the previous subcase with the roles of $r_1$ and $r_2$ reversed. \\

\textit{Case 4C} ($|\xi|,|\xi_1|,|\xi_2|\gg1$, and $\xi,\xi_1 \in C'$)\\
As before, we consider separate subcases depending on $M_2$. Recall that for this subcase, by (\ref{max_lb2}) we have $M_2 \gsim |\xi|^2 |\xi_1|$. \\

If $M_2 = |\tau-\alpha\xi^3|$, then
\begin{align*}
\eqref{duality***} &\lsim  \int\limits_{\substack{\xi = \xi_1+\xi_2 \\ \tau = \tau_1+\tau_2}} \frac{|\xi|\la\xi\ra^{\frac12+}\wh{g}(\xi,\tau)}{\la\xi\ra^{2b-\frac{2s-1}{3}}} \frac{\wh{f_1}(\xi_1,\tau_1)}{\la\xi_1\ra^{b-\frac{2s-1}{6}+s}\la\tau_1-\xi_1^3\ra^b} \frac{\wh{f_2}(\xi_2,\tau_2)}{\la\xi_2\ra^{s}\la\tau_2-\alpha\xi_2^3\ra^b}d\xi d\xi_1 d\tau d\tau_1 \\
&\lsim \int\limits_{\substack{\xi = \xi_1+\xi_2 \\ \tau = \tau_1+\tau_2}} \frac{\la\xi\ra^{1-2b}\wh{g}(\xi,\tau)}{\la\xi\ra^{s-\frac{2s-1}{3}-\frac12-}} \frac{\wh{f_1}(\xi_1,\tau_1)}{\la\xi_1\ra^{b-\frac{2s-1}{6}}\la\tau_1-\xi_1^3\ra^b} \frac{\wh{f_2}(\xi_2,\tau_2)}{\la\tau_2-\alpha\xi_2^3\ra^b}d\xi d\xi_1 d\tau d\tau_1 \\
&\lsim \int\limits_{\substack{\xi = \xi_1+\xi_2 \\ \tau = \tau_1+\tau_2}} \frac{\wh{g}(\xi,\tau)}{\la\xi\ra^{\frac{2s-1}{6}-}} \frac{\la\xi_1\ra^{1-2b}\wh{f_1}(\xi_1,\tau_1)}{\la\tau_1-\xi_1^3\ra^b} \frac{\la\xi_2\ra^{1-2b}\wh{f_2}(\xi_2,\tau_2)}{\la\tau_2-\alpha\xi_2^3\ra^b}d\xi d\xi_1 d\tau d\tau_1 \\
&\lsim \int\limits_{\substack{\xi = \xi_1+\xi_2 \\ \tau = \tau_1+\tau_2}} \wh{g}(\xi,\tau)\frac{|\xi_1|^{1-2b}\wh{f_1}(\xi_1,\tau_1)}{\la\tau_1-\xi_1^3\ra^b} \frac{|\xi_2|^{1-2b}\wh{f_2}(\xi_2,\tau_2)}{\la\tau_2-\alpha\xi_2^3\ra^b}d\xi d\xi_1 d\tau d\tau_1 \\
&\lsim \|g\|_{L^2_{\xi,\tau}} \|f_1\|_{L^2_{\xi,\tau}} \|f_2\|_{L^2_{\xi,\tau}}
\end{align*}
where we used $\frac12<s<2$, and we require $b\geq\frac{7}{16}$ to use (\ref{Strichartz_4}) in the last line.\\

If $M_2 = |\tau_1-\xi_1^3|$, we can apply the tradeoff argument from the proof of (\ref{bil_3}):
$$\la\tau_1-\xi_1^3\ra^b  = \la\tau_1-\xi_1^3\ra^{b-\frac{2s-1}{6}}\la\tau_1-\xi_1^3\ra^{\frac{2s-1}{6}} \geq \la\tau_1-\xi_1^3\ra^{b-\frac{2s-1}{6}}\la\tau-\alpha\xi^3\ra^{\frac{2s-1}{6}}.$$
In this subcase, we have
\begin{align*}
\eqref{duality***} &\lsim  \int\limits_{\substack{\xi = \xi_1+\xi_2 \\ \tau = \tau_1+\tau_2}} \frac{|\xi|\la\xi\ra^{\frac12+}\wh{g}(\xi,\tau)}{\la\tau-\alpha\xi^3\ra^{b-\frac{2s-1}{6}}} \frac{\wh{f_1}(\xi_1,\tau_1)}{\la\xi_1\ra^{s}\la\tau_1-\xi_1^3\ra^b} \frac{\wh{f_2}(\xi_2,\tau_2)}{\la\xi_2\ra^{s}\la\tau_2-\alpha\xi_2^3\ra^b}d\xi d\xi_1 d\tau d\tau_1 \\
&\lsim  \int\limits_{\substack{\xi = \xi_1+\xi_2 \\ \tau = \tau_1+\tau_2}} \frac{|\xi|\la\xi\ra^{\frac12+}\wh{g}(\xi,\tau)}{\la\tau-\alpha\xi^3\ra^b} \frac{\wh{f_1}(\xi_1,\tau_1)}{\la\xi_1\ra^{s}\la\tau_1-\xi_1^3\ra^{b-\frac{2s-1}{6}}} \frac{\wh{f_2}(\xi_2,\tau_2)}{\la\xi_2\ra^{s}\la\tau_2-\alpha\xi_2^3\ra^b}d\xi d\xi_1 d\tau d\tau_1\\
&\lsim  \int\limits_{\substack{\xi = \xi_1+\xi_2 \\ \tau = \tau_1+\tau_2}} \frac{|\xi|\la\xi\ra^{\frac12-s+}\wh{g}(\xi,\tau)}{\la\xi\ra^{2b-\frac{2s-1}{3}}\la\tau-\alpha\xi^3\ra^b} \frac{\wh{f_1}(\xi_1,\tau_1)}{\la\xi_1\ra^{b-\frac{2s-1}{6}}} \frac{\wh{f_2}(\xi_2,\tau_2)}{\la\tau_2-\alpha\xi_2^3\ra^b}d\xi d\xi_1 d\tau d\tau_1\\
&\lsim  \int\limits_{\substack{\xi = \xi_1+\xi_2 \\ \tau = \tau_1+\tau_2}} \frac{1}{\la\xi\ra^{\frac{2s-1}{6}-}}\frac{|\xi|^{1-2b}\wh{g}(\xi,\tau)}{\la\tau-\alpha\xi^3\ra^b} \wh{f_1}(\xi_1,\tau_1) \frac{\wh{f_2}(\xi_2,\tau_2)}{\la\tau_2-\alpha\xi_2^3\ra^b}d\xi d\xi_1 d\tau d\tau_1\\
&\lsim \|g\|_{L^2_{\xi,\tau}} \|f_1\|_{L^2_{\xi,\tau}} \|f_2\|_{L^2_{\xi,\tau}}
\end{align*}
where we've used $\frac{2s-1}{6}>0$ for $s>\frac12$ and then (\ref{Strichartz_4}) with $b>\frac{7}{16}$, just as in the previous subcase.\\

If $M_2 = |\tau_2-\alpha\xi_2^3|$, the proof is virtually identical.

\end{proof}

\section{Proof of Theorem \ref{thrm:main}}\label{sec:proof}

With the bilinear estimates in hand, we are now ready to prove our main result. 
We fix $0<\alpha<1$, $0<s<2$, $s\neq \frac12,\frac32$, $u_0, v_0 \in H^s_x(\R^+)$, $f,g \in H_t^{\frac{s+1}{3}}(\R^+)$, with the requirement that $u_0(0) = f(0)$ and $v_0(0) = g(0)$ if $s>\frac12$. Then choose extensions $\tu_0, \tv_0 \in H^s_x(\R)$ with $\|\tu_0\|_{H^s(\R)} \lsim \|u_0\|_{H^s(\R^+)}$ and $\|\tv_0\|_{H^s(\R)} \lsim \|v_0\|_{H^s(\R^+)}$, which is possible by \eqref{R+norm}.

$T<1$ will be chosen later. To choose $b$ and $\gamma$, we first choose a small $\epsilon>0$ depending on $s$. If $s<\frac12$, choose $\epsilon<\frac12-\max(\frac{3-s}{6}, \frac{7}{16})$. On the other hand, if $\frac12<s<2$, choose $\epsilon< \frac12 - \max(\frac{s+1}{6},\frac{7}{16})$ instead. Then take $\gamma = \frac12+\epsilon$ and $b = \frac12-2\epsilon$. This ensures that
$\frac12<\gamma \leq \frac12+$, $0
<\frac12-b-$, and all conditions for Proposition \ref{prop:bil} are satisfied. 

\subsection{Local Theory} \label{subsec:local} In order to show the local existence of solutions to \eqref{majdaIBVP}, we consider the operators $\Gamma_1$ and $\Gamma_2$ from the right side of the integral formulation \eqref{weaksol}:

\begin{equation*} 
\begin{split}
\Gamma_1 (u,v) &:= \eta(t)W^t \tu_0 + \eta(t)\int_0^t W^{t-t'} F(x,t') \,dt' + 2\eta\Re W_1(f-p) (x,t) \\
\Gamma_2 (u,v) &:= \eta(t)W^t_\alpha \tv_0 + \eta(t)\int_0^t W_\alpha^{t-t'} G(x,t')\, dt' + 2\eta\Re W_1(g-q) (\sqrt[3]{\alpha}\,x,t) 
\end{split}
\end{equation*}

where:\\
\\
$
\begin{array}{lll}
F(u,v) = \eta(t/T) vv_x & \qquad&  G(u,v) = \eta(t/T) (uv)_x\\
p = \eta(t) D_0(W^t \tu_0) + \eta(t) D_0 \left(\int_0^t W^{t-t'} F dt'\right) &\qquad &
q = \eta(t) D_0(W^t_\alpha \tv_0) + \eta(t) D_0 \left(\int_0^t W_\alpha^{t-t'} G dt'\right).
\end{array}
$\\

We aim to prove that the operator $\Gamma$, defined by

\begin{equation*}
\Gamma \begin{bmatrix}u \\ v\end{bmatrix} = \begin{bmatrix}
\Gamma_1(u,v) \\ \Gamma_2(u,v)
\end{bmatrix}
\end{equation*}
has a fixed point in the space 
$Y\times Y_\alpha:= (X^{s,b} \cap V^\gamma) \times (X^{s,b}_\alpha \cap V^\gamma).$ We first establish that $\Gamma$ is a bounded operator on $Y\times Y_\alpha$. Using Lemmas \ref{lin_V} and \ref{xHs_embed}, along with the fact that $s>0$,
\begin{align*}
\|\eta W^t \tu_0 \|_Y \lsim \| \eta W^t \tu_0\|_{X^{s,b}} + \|\eta W^t \tu_0\|_{V^\gamma} 
\lsim \|\tu_0\|_{H^s} + \|\tu_0\|_{L^2} \lsim \|\tu_0\|_{H^s}. 
\end{align*}
Next, by Lemmas \ref{Duham} and \ref{extractT}, followed by the bilinear estimate \eqref{bil_1}
\begin{align*}
\left\| \eta \int_0^t W^{t-t'} F(x,t') \,dt' \right\|_{X^{s,b}} &\leq \left\| \eta \int_0^t W^{t-t'} F(x,t') \,dt' \right\|_{X^{s,\frac12+}} \lsim \|F\|_{X^{s,-\frac12+}} \\
&\lsim T^{\frac12-b-} \|\del_x(v^2)\|_{X^{s,-b}} \lsim T^{\frac12-b-} \|v\|^2_{Y_\alpha}.
\end{align*}
Note that $X^{0,\gamma} \subseteq V^\gamma$. Then, if we choose $\frac12<\gamma\leq \frac12+$, we can apply Lemma \ref{Duham} to bound
\begin{align*}
\left\| \eta \int_0^t W^{t-t'} F(x,t') \,dt' \right\|_{V^\gamma} \lsim \left\| \eta \int_0^t W^{t-t'} F(x,t') \,dt' \right\|_{X^{0,\gamma}} \lsim \|F\|_{X^{0,-\frac12+}} \leq \|F\|_{X^{s,-\frac12+}}.
\end{align*}
Combining the previous two lines gives
$$\left\| \eta \int_0^t W^{t-t'} F(x,t') \,dt' \right\|_{Y} \lsim T^{\frac12-b-} \|v\|^2_{Y_\alpha}.$$
By Proposition \ref{Boundary_Xsb}, Corollary \ref{Boundary_V} (by our choice of $\gamma$,  $\frac12<\gamma\leq\frac{s}{3}$), and Lemma \ref{lem_Kato}
\begin{align*}
\| 2\Re W_1 (f-p) \|_{Y} &\lsim \|f-p\|_{H^{\frac{s+1}{3}}_t(\R^+)}\\
& \lsim \|f \|_{H^{\frac{s+1}{3}}_t(\R^+)} + \|\eta W^t \tu_0\|_{L^\infty_x H^{\frac{s+1}{3}}_t} + \left\| \eta \int_0^t W^{t-t'} F \, dt' \right\|_{L^\infty_x H^{\frac{s+1}{3}}_t}\\
&\lsim \|f \|_{H^{\frac{s+1}{3}}_t(\R^+)} + \| \tu_0\| _{H^s} + \left\| \eta \int_0^t W^{t-t'} F \, dt' \right\|_{L^\infty_x H^{\frac{s+1}{3}}_t}.
\end{align*}
We bound the last term using Proposition \ref{Duham_time_trace}. Recall that this can introduce an additional term depending on whether $\frac12<s<2$ or $s<\frac12$. Either way, we then apply Lemma \ref{extractT} followed by the appropriate bilinear estimate, \eqref{bil_1} or \eqref{bil_3} 
\begin{align*}
\left\| \eta \int_0^t W^{t-t'} F \, dt'\right\|_{L^\infty_x H^{\frac{s+1}{3}}_t} & \lsim  \|F\|_{X^{s,-\frac12+}} + \chi_{(\frac12,2)}(s) \, \|F\|_{X^{\frac12+, \frac{s-2}{3}}} \\
& \lsim T^{\frac12-b-}\|\partial_x (v^2)\|^2_{X^{s,-b}} + \chi_{(\frac12,2)}(s) \, T^{\frac12-b} \| \del_x(v^2)\|_{X^{\frac12+,\frac{2s-1}{6}-b}} \\
& \lsim  T^{\frac12-b-} \|v\|^2_{Y_\alpha}. 
\end{align*}
Therefore we have shown
$$\|\Gamma_1(u,v) \|_Y \lsim \|\tu_0\|_{H^s} + \|f\|_{H^{\frac{s+1}{3}}(\R^+)} + T^{\frac12-b-} \, \|v\|^2_{Y_\alpha} .$$

We can apply the same steps to $\Gamma_2(u,v)$ to show
$$\|\Gamma_2(u,v) \|_{Y_\alpha} \lsim \|\tv_0\|_{H^s} + \|g\|_{H^{\frac{s+1}{3}}(\R^+)} + T^{\frac12-b-} \, \|u\|_Y \, \|v\|_{Y_\alpha} .$$
Although the linear estimates from Section \ref{sec:apriori} are posed in terms of $W^t$ rather than $W_\alpha^t$, they can still be applied after a simple Fourier scaling, similar to Remark \ref{rmk:Fourier_scaling}. 

Having established that $\Gamma$ is bounded, we apply similar calculations for the differences in order to show $\Gamma$ is a contraction. For example,
\begin{align*}
\|\Gamma_2(u,v) - \Gamma_2(u',v')\|_{X^{s,b}_\alpha} &= \left\| \eta \int_0^t W^{t-t'} [G-G'] \,dt' \right\|_{X^{s,b}_\alpha}\\
&\lsim T^{\frac12-b-}\|\del_x(uv-u'v')\|_{X^{s,-b}_\alpha}\\
&\lsim T^{\frac12-b-}\|\del_x [(u-u')(v+v')]\|_{X^{s,-b}_\alpha}+T^{\frac12-b-}\|\del_x [(v-v')(u+u')]\|_{X^{s,-b}_\alpha}\\
& \lsim T^{\frac12-b-} \| v+v'\|_{Y_\alpha} \, \|u-u'\|_Y +  T^{\frac12-b-} \| u+u'\|_{Y} \, \|v-v'\|_{Y_\alpha}.
\end{align*}
The calculation for $\Gamma_1$ is similar. Therefore, by choosing $T<1$ small enough, we can ensure $\Gamma$ is a contraction mapping on a ball in $Y \times Y_\alpha$ around 0, with radius depending on the initial and boundary data $\|u_0\|_{H^s}$, $\|v_0\|_{H^s}$, $\|f\|_{H^{\frac{s+1}{3}}}$, and $\|g\|_{H^{\frac{s+1}{3}}}$. The Banach fixed point theorem then guarantees the existence of a solution $(u,v) \in Y \times Y_{\alpha}$ to \eqref{weaksol}.\\ 

We now verify that $u = \Gamma_1(u,v)$ of this fixed point lies in $C^0_tH^s_x([0,T]\times \R)$. Similar calculations apply to $v$.
By \eqref{W_unitary} and \eqref{R+norm}, we have
$$ \|\eta W^t \tu_0 \|_{C^0_tH^s_x} \lsim \| \tu_0\|_{H^s(\R)} \lsim \|u_0\|_{H^s(\R^+)}.$$
By \eqref{Xsb_embed}, and the contraction argument above, 
$$ \left\| \eta \int_0^t W^{t-t'} F(x,t') \,dt' \right\|_{C^0_tH^s_x} \lsim \left\| \eta \int_0^t W^{t-t'} F(x,t') \,dt' \right\|_{X^{s,\frac12+}} \lsim \ldots \lsim T^{\frac12-b-} \|v\|^2_{Y_\alpha}.$$
Then by Proposition 4.7 and the bounds in the contraction argument,
$$\|W_1(f-p)\|_{C^0_tH^s_x} \lsim \|f-p\|_{H^{\frac{s+1}{3}}(\R^+)} \lsim \ldots \lsim \|u_0\|_{H^s(\R^+)} + \|f\|_{H^{\frac{s+1}{3}}(\R^+)} + T^{\frac12-b-} \|v\|^2_{Y_\alpha}.$$

We must also verify that $u = \Gamma_1(u,v) \in C^0_xH^{\frac{s+1}{3}}_t (\R \times [0,T])$. 
In the contraction argument above we showed
$$\left\| \eta W^t \tu_0 + \eta \int_0^t W^{t-t'} F \, dt'\right\|_{C^0_x H^{\frac{s+1}{3}}_t} \lsim \|\tu_0\|_{H^s}  + T^{\frac12-b-} \|v\|^2_{Y_\alpha}.$$

It remains to check the boundary term. By Proposition 4.8,
$$\norm{\eta W_1 (f-p)}_{C^0_x H^{\frac{s+1}{3}}_t} \lsim \norm{f-p}_{H^{\frac{s+1}{3}}_t (\R^+)}\lsim \|\tu_0\|_{H^s}  + T^{\frac12-b-} \|v\|^2_{Y_\alpha}.$$

Similar estimates apply to $v$. Therefore our fixed point $(u,v)$ lies in the desired Banach space from Definition \ref{def:LWP}. This completes the proof of the local existence of solutions to \eqref{majdaIBVP}. 

The continuous dependence of these local solutions on the initial and boundary data also follows from the fixed point argument and the a priori estimates. The calculations are very similar to those above. For instance, if $(u,v)$ and $(u_n, v_n)$ are two solutions with different initial and boundary data, and we take $T$ to be the lesser of the two local existence times, we can easily show
\begin{align*}
\|u-u_n\|_Y + \|v-v_n\|_{Y_\alpha} \leq& C \big( \|u_0-u_{0,n}\|_{H^s(\R^+)} + \|v_0-v_{0,n}\|_{H^s(\R^+)} \big) \\
&\quad + C \big(\|f-f_n\|_{H^{\frac{s+1}{3}}(\R^+)} + \|g-g_n\|_{H^{\frac{s+1}{3}}(\R^+)} \big)\\
& \quad + C_1 T^{\frac12-b-} \|u-u_n\|_Y + C_2 T^{\frac12-b-} \|v-v_n\|_{Y_\alpha}
\end{align*}
where $C$ is an absolute constant, and $C_1$ and $C_2$ depend on the radii of the balls in the fixed point arguments (and thus on the size of the initial and boundary data). Because the local existence times were chosen to make $\Gamma$ a contraction, we have 
$$C_i T^{\frac12-b-} < 1 \quad \text{for }i=1,2$$
and continuous dependence in $Y \times Y_\alpha$ follows. The continuous dependence in $C^0_tH^s_x$ and $C^0_xH^{\frac{s+1}{3}}_t$ are proven similarly.

\subsection{Uniqueness for $s>\frac32$} \label{subsec:uniqueness}

To complete the proof of Theorem \ref{thrm:main}. We consider two solutions $(u,v)$ and $(u',v')$ of the IBVP \eqref{majdaIBVP}. By the local theory in the previous section, these solutions are in $C_tH^s_x([0,T]\times \R^+)$.  For $s>\frac32$, uniqueness follows from the energy estimates described in the following lemma together with Gronwall's inequality. 

\begin{lemma} \label{energy_bound}
Let $s>\frac32$ and let $(u,v)$ and $(u',v')$ be two solutions of \eqref{majdaIBVP}. Define
$$I:= \|u-u'\|^2_{L^2(\R^+)} + \|v-v'\|^2_{L^2(\R^+)}$$
Then, 
$$\del_t I \lsim (\max_{f = u,u',v,'v} \|f\|_{H^s(\R^+)})\,  I.$$

\end{lemma}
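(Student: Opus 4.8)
The plan is to reduce the statement to an energy identity for the differences $w := u - u'$ and $z := v - v'$, which solve
$$w_t + w_{xxx} = -(vv_x - v'v'_x), \qquad z_t + \alpha z_{xxx} = -\big((uv)_x - (u'v')_x\big),$$
and, crucially, satisfy the homogeneous boundary conditions $w(0,t) = z(0,t) = 0$, since both solutions share the same boundary data $f,g$. Differentiating $I$ and substituting the equations gives
$$\del_t I = -2\int_0^\infty w\, w_{xxx}\,dx - 2\alpha\int_0^\infty z\, z_{xxx}\,dx + N,$$
where $N$ collects the nonlinear contributions. I would dispatch the two dispersive terms by integrating by parts twice; because $w(0,t)=z(0,t)=0$, the only surviving boundary contributions are $-w_x(0,t)^2$ and $-\alpha\, z_x(0,t)^2$, both $\leq 0$ (using $\alpha>0$), so they may be discarded in the upper bound. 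Traces such as $w_x(0,t)$ are well defined here because $s>\frac32$ forces $w_x\in H^{s-1}$ with $s-1>\frac12$; the vanishing of the terms at $x=+\infty$ follows from the decay of $H^s(\R^+)$ functions and can be made rigorous by a standard smoothing/approximation argument.

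The heart of the matter is the cancellation of the top-order terms in $N$, namely those carrying a derivative of $w$ or $z$, since such terms cannot be controlled by $I$, which sees no derivatives of $w,z$. My key device is to write the nonlinear differences \emph{symmetrically},
$$vv_x - v'v'_x = \tfrac12\del_x\big(z(v+v')\big), \qquad uv - u'v' = \tfrac12\, w(v+v') + \tfrac12\, z(u+u'),$$
which mirrors the cancellation underlying the $L^2$ conservation law in \eqref{conserved}. Writing $V := v+v'$ and $U := u+u'$, the coupled quadratic part of $N$ becomes $-\int w\,\del_x(zV)\,dx - \int z\,\del_x(wV)\,dx$, and after integrating by parts (the boundary terms again vanish since $w(0)=z(0)=0$) this collapses, with no derivatives left on $w$ or $z$, to
$$-\int_0^\infty V\,\del_x(wz)\,dx - 2\int_0^\infty V_x\, wz\,dx = -\int_0^\infty V_x\, wz\,dx.$$
The remaining diagonal piece $-\int z\,\del_x(zU)\,dx$ reduces similarly to $-\tfrac12\int U_x\, z^2\,dx$, so altogether $N = -\int V_x\, wz\,dx - \tfrac12\int U_x\, z^2\,dx$.

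It then remains to bound these two genuinely lower-order terms. I would estimate
$$|N| \leq \norm{V_x}_{L^\infty}\int_0^\infty |w|\,|z|\,dx + \tfrac12\norm{U_x}_{L^\infty}\int_0^\infty z^2\,dx \lsim \big(\norm{U_x}_{L^\infty} + \norm{V_x}_{L^\infty}\big)\,I,$$
using Cauchy--Schwarz and $|wz|\leq\frac12(w^2+z^2)$. Finally the Sobolev embedding $H^{s-1}(\R^+)\hookrightarrow L^\infty(\R^+)$, valid precisely because $s-1>\frac12$, gives $\norm{U_x}_{L^\infty}+\norm{V_x}_{L^\infty}\lsim \norm{u}_{H^s}+\norm{u'}_{H^s}+\norm{v}_{H^s}+\norm{v'}_{H^s}\lsim \max_{f}\norm{f}_{H^s(\R^+)}$, which yields $\del_t I \lsim (\max_{f}\norm{f}_{H^s(\R^+)})\,I$ as claimed. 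I expect the symmetric splitting and the resulting top-order cancellation to be the main obstacle; the hypothesis $s>\frac32$ enters in exactly two places --- to give meaning to the trace $w_x(0,t)$ and to close the $L^\infty$ bound on the first-order coefficients $U_x,V_x$ --- which explains why uniqueness is obtained only in this range.
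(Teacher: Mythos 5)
Your proposal is correct and follows essentially the same route as the paper's proof: an $L^2$ energy estimate for the differences $w=u-u'$, $z=v-v'$, the symmetric splitting $vv_x-v'v'_x=\tfrac12\del_x\big(z(v+v')\big)$ and $2(uv-u'v')=(u-u')(v+v')+(u+u')(v-v')$, integration by parts with boundary contributions controlled because the two solutions share the same boundary data, the Sobolev embedding enabled by $s>\frac32$, and a mollification argument to remove the smoothness assumption (your collapse of the nonlinear terms to $-\int V_x\,wz - \tfrac12\int U_x\,z^2$ is just a tidier bookkeeping of the paper's six terms $I_1,\dots,I_6$). If anything, your treatment of the dispersive terms is more precise than the paper's, which asserts they vanish after integration by parts, whereas they actually equal $-w_x(0,t)^2$ and $-\alpha\,z_x(0,t)^2$, nonpositive and hence harmlessly discarded exactly as you observe.
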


\begin{proof}
For this proof we write $L^2$ to mean $L^2(\R^+)$. First suppose $(u,v)$ and $(u',v')$ are smooth solutions of \eqref{majdaIBVP}. Then we compute
\begin{align*}
\del_t \|u-u'\|^2_{L^2} &= -2 \int_0^\infty (u-u')(u-u')_{xxx} \,dx -2 \int_0^\infty (u-u')(vv_x-v'v'_x) \,dx \\
&= -\int_0^\infty (u-u')[(v-v')(v+v')]_x \,dx\\
& = -\int_0^\infty (u-u')(v-v') (v+v')_x \,dx - \int_0^\infty (u-u')(v+v')(v-v')_x \,dx \\
&:= I_1 + I_2
\end{align*}
The first integral in the first line above is seen to be zero after an integration by parts (note that $u(0,t) = u'(0,t) = f(t)$ for all $t>0$, so the boundary terms vanish). 

Likewise 
\begin{align*}
\del_t \|v-v'\|^2_{L^2} &= -2\alpha \int_0^\infty (v-v')(v-v')_{xxx}\,dx -2 \int_0^\infty (v-v')[(uv)_x-(u'v')_x] \, dx \\
& = -\int_0^\infty (v-v')[(u-u')(v+v')]_x dx -\int_0^\infty (v-v')[(u+u')(v-v')]_x \, dx
\end{align*}
where the first integral is again zero and we've used the identity
$$2(uv-u'v') = (u-u')(v+v') + (u+u')(v-v').$$ 
Expanding gives 
\begin{align*}
\del_t \|v-v'\|^2_{L^2} &= -\int_0^\infty(v-v')(u-u')(v+v')_x \, dx - \int_0^\infty (v-v')(v+v')(u-u')_x \, dx \\
& \qquad \qquad -\int_0^\infty (v-v')^2 (u+u')_x \, dx - \int_0^\infty (v-v')(u+u')(v-v')_x \, dx\\
&:= I_3 + I_4 + I_5 + I_6.
\end{align*}

By Cauchy-Schwarz we have 
\begin{align*}
|I_1| &\leq \|u-u'\|_{L^2} \|v-v'\|_{L^2} \|(v+v')_x\|_{L^\infty}\\
|I_3| &\leq \|u-u'\|_{L^2} \|v-v'\|_{L^2} \|(v+v')_x\|_{L^\infty}\\
|I_5| &\leq \|v-v'\|^2_{L^2} \|(u+u')_x\|_{L^\infty}.
\end{align*}

For $s>\frac32$, the Sobolev embedding (along with the elementary identity $2ab \leq a^2 + b^2$), gives the desired bound for the above terms. 

For $I_6$, we simply integrate by parts
$$|I_6| = \left| \int_0^\infty \frac12 (v-v')^2 (u+u')_x \, dx \right| \leq \frac12 \|v-v'\|^2_{L^2} \|(u+u')_x \|_{L^\infty}.$$ 

Finally we treat $I_2$ and $I_4$ together and integrate by parts again
$$|I_2+I_4| = \left| -\int_0^\infty (v+v') [(u-u')(v-v')]_x \, dx \right| \leq \|u-u'\|_{L^2} \|v-v'\|_{L^2} \|(v+v')_x\|_{L^\infty}.$$

In fact, this argument extends to the case where $(u,v)$ and $(u',v')$ are not smooth. The idea is to take the convolution of $u-u'$ and $v-v'$ with smooth approximate identities and apply a limiting argument. A similar version of this is carried out in \cite{holmerNLS} for the NLS equation. Finally, we remark that this energy estimate approach establishes uniqueness independent of the choice of extension of the initial data as the norms in the lemma are taken on $\R^+$. 
\end{proof}

\renewcommand\thesection{A}
\section{Appendix}
In this section we record a few of the standard inequalities which are frequently useful in the proofs of a priori or multilinear estimates. 
\begin{lemma} \cite [Lemma 6.1]{chous} \label{A.1}
For $-\frac12 \leq s \leq \frac12$, we have
$$\norm{fg}_{H^s} \lsim \norm{f}_{H^{1/2+}} \norm{g}_{H^s}.$$ 
\end{lemma}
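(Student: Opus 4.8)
The plan is to pass to the Fourier side and reduce the estimate to a single weighted convolution bound that I can close by Cauchy--Schwarz together with the calculus Lemma \ref{A.2}. Writing $\xi = \xi_1 + \xi_2$ (with $\xi_1$ the frequency of $f$ and $\xi_2$ that of $g$), and setting $F(\xi_1) = \langle\xi_1\rangle^{\frac12+}|\widehat{f}(\xi_1)|$, $G(\xi_2)=\langle\xi_2\rangle^{s}|\widehat{g}(\xi_2)|$ so that $\|F\|_{L^2} = \|f\|_{H^{1/2+}}$ and $\|G\|_{L^2}=\|g\|_{H^s}$, the claim $\|fg\|_{H^s}\lsim\|f\|_{H^{1/2+}}\|g\|_{H^s}$ is equivalent to the bound
$$\left\|\int_\R \frac{\langle\xi\rangle^s}{\langle\xi_1\rangle^{\frac12+}\langle\xi_2\rangle^{s}}\,F(\xi_1)\,G(\xi_2)\,d\xi_2\right\|_{L^2_\xi}\lsim\|F\|_{L^2}\|G\|_{L^2}, \qquad \xi_1 = \xi-\xi_2.$$
This is exactly the kind of expression handled by the Cauchy--Schwarz arguments used throughout Section \ref{sec: bilinear}.

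First I would treat the range $0\le s\le\frac12$. Applying Cauchy--Schwarz in $\xi_2$ (keeping $F$ and $G$ in $L^2$) reduces the claim to the uniform estimate
$$\sup_\xi\;\langle\xi\rangle^{2s}\int_\R\frac{d\xi_2}{\langle\xi-\xi_2\rangle^{1+}\langle\xi_2\rangle^{2s}}\lsim1.$$
Here the extra regularity encoded in the exponent $\frac12+$ is essential: it makes the power of $\langle\xi-\xi_2\rangle$ strictly larger than $1$, so that the $\xi_1$-factor is integrable and Lemma \ref{A.2} applies. Since $0\le 2s\le 1$, the lemma gives $\int\langle\xi-\xi_2\rangle^{-(1+)}\langle\xi_2\rangle^{-2s}\,d\xi_2\lsim\langle\xi\rangle^{-2s}$, the dominant contribution coming from $\xi_2\approx\xi$, and the prefactor $\langle\xi\rangle^{2s}$ then cancels it. This disposes of every $s\in[0,\frac12]$ at once, including the endpoint $s=\frac12$.

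For $-\frac12\le s<0$ the same Cauchy--Schwarz fails, and this is the main obstacle: in the high--high interaction $|\xi_1|\sim|\xi_2|\gg|\xi|$ the factor $\langle\xi_2\rangle^{-2s}$ grows and the weight integral $\int m^2\,d\xi_2$ diverges. I would instead argue by duality. Pairing against a test function $\phi$ with $\|\phi\|_{H^{-s}}\le1$ and using $\langle fg,\phi\rangle=\langle g,\bar f\phi\rangle$, I obtain
$$\|fg\|_{H^s}=\sup_{\|\phi\|_{H^{-s}}\le1}\big|\langle fg,\phi\rangle\big|\le\|g\|_{H^s}\sup_{\|\phi\|_{H^{-s}}\le1}\|\bar f\phi\|_{H^{-s}}.$$
Since $-s\in(0,\frac12]$, the product estimate $\|\bar f\phi\|_{H^{-s}}\lsim\|f\|_{H^{1/2+}}\|\phi\|_{H^{-s}}$ is precisely the case established in the previous paragraph (using $\|\bar f\|_{H^{1/2+}}=\|f\|_{H^{1/2+}}$), and the desired bound follows.

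In summary, the only genuinely new computation is the single weighted integral estimate above, which I expect to hand off cleanly to Lemma \ref{A.2}; the negative-regularity range comes for free by duality, and the borderline cases $s=\pm\frac12$ cause no difficulty because it is the $\xi_2\approx\xi$ piece (of size $\langle\xi\rangle^{-2s}$), rather than the near-zero piece, that governs the weight. The care needed lies in verifying that the relevant exponents stay in the admissible range of Lemma \ref{A.2} throughout $0\le 2s\le1$, and in setting up the duality pairing so that the conjugation does not affect the $H^{1/2+}$ norm of $f$.
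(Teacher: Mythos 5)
Your proof is correct: the reduction to the weighted convolution bound, Cauchy--Schwarz in $\xi_2$, and Lemma \ref{A.2} with $\beta = 1+$, $\gamma = 2s$ (which stays in the lemma's admissible range throughout $0 \le 2s \le 1$, including the endpoint $s=\tfrac12$) settle the nonnegative range, and the duality step via $\|fg\|_{H^s} = \sup_{\|\phi\|_{H^{-s}}\le 1}|\langle fg,\phi\rangle| \le \|g\|_{H^s}\|\bar f\phi\|_{H^{-s}}$, together with $\|\bar f\|_{H^{1/2+}} = \|f\|_{H^{1/2+}}$, correctly transfers it to $-\tfrac12 \le s < 0$. The paper itself does not prove this lemma---it is quoted from \cite{chous}---and your argument is essentially the standard proof used in that reference, so there is nothing to fix.
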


The following calculus lemmas appear in \cite{tz_cubicnls} as Lemmas A.2 and A.3, respectively.
\begin{lemma} \label{A.2}
If $0\leq \gamma \leq \beta$ and $1<\beta+\gamma$, then
$$\int \frac{1}{\jbrac{x-a_1}^\beta \jbrac{x-a_2}^\gamma} dx \lsim \jbrac{a_1-a_2}^{-\gamma} \phi_\beta (a_1-a_2),$$
\indent where
$$\phi_\beta(a) \sim
\begin{cases}
1 & \beta > 1\\
\log{(1+\jbrac{a}} & \beta = 1 \\
\jbrac{a}^{1-\beta} & \beta < 1.
\end{cases}$$
\end{lemma}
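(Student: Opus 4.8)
The plan is to prove this by a region decomposition of the real line that reduces the two-center integral to elementary one-variable integrals of a single Japanese bracket. First I would exploit the symmetries. The integrand depends on $a_1,a_2$ only through a shift, so translating $x \mapsto x + a_2$ reduces matters to bounding
$$J := \int_{\R} \frac{dx}{\jbrac{x-d}^\beta \jbrac{x}^\gamma}, \qquad d := a_1 - a_2,$$
and the reflection $x \mapsto -x$ lets me assume $d \ge 0$. When $d \lsim 1$ the two brackets $\jbrac{x-d}$ and $\jbrac{x}$ are comparable for every $x$ (their arguments differ by at most $d$), so $J \lsim \int_{\R}\jbrac{x}^{-(\beta+\gamma)}\,dx \lsim 1$ since $\beta+\gamma>1$, matching the right-hand side $\jbrac{d}^{-\gamma}\phi_\beta(d) \sim 1$. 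Hence I may assume $d \gg 1$.

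The key step is to decompose $\R = R_1 \cup R_2 \cup R_3$, where $R_1 = \{|x| \le d/2\}$ (near the center $0$), $R_2 = \{|x - d| \le d/2\}$ (near the center $d$), and $R_3$ is the complement. The triangle inequality $d \le |x| + |x-d|$ shows that on $R_1$ one has $\jbrac{x-d} \gsim \jbrac{d}$, on $R_2$ one has $\jbrac{x} \gsim \jbrac{d}$, and on $R_3$ both brackets are $\gsim \jbrac{d}$ and are moreover comparable to one another. The engine of the whole estimate is the elementary integral
$$I_\rho(d) := \int_{|y| \le d/2} \jbrac{y}^{-\rho}\,dy \sim \begin{cases} \jbrac{d}^{1-\rho} & \rho < 1 \\ \log(1+\jbrac{d}) & \rho = 1 \\ 1 & \rho > 1, \end{cases}$$
so in particular $I_\beta(d) \sim \phi_\beta(d)$.

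I would then estimate each piece. On $R_2$, pulling out $\jbrac{x}^{-\gamma} \sim \jbrac{d}^{-\gamma}$ gives $\int_{R_2} \lsim \jbrac{d}^{-\gamma} I_\beta(d) \sim \jbrac{d}^{-\gamma}\phi_\beta(d)$, which is exactly the claimed bound and identifies $R_2$ as the term producing $\phi_\beta$. On $R_1$, pulling out $\jbrac{x-d}^{-\beta} \sim \jbrac{d}^{-\beta}$ gives $\int_{R_1} \lsim \jbrac{d}^{-\beta} I_\gamma(d)$; writing $\jbrac{d}^{-\beta} = \jbrac{d}^{-\gamma}\jbrac{d}^{\gamma-\beta}$ and running through the three regimes of $\gamma$ relative to $1$ shows $\jbrac{d}^{\gamma-\beta} I_\gamma(d) \lsim \phi_\beta(d)$, using $\gamma \le \beta$. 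On $R_3 \subseteq \{|x|>d/2\}$, comparability of the two brackets yields $\int_{R_3} \lsim \int_{|y| \ge d/2}\jbrac{y}^{-(\beta+\gamma)}\,dy \sim \jbrac{d}^{1-\beta-\gamma} = \jbrac{d}^{-\gamma}\jbrac{d}^{1-\beta}$, and $\jbrac{d}^{1-\beta} \lsim \phi_\beta(d)$ holds in all three regimes (with equality when $\beta<1$).

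The argument is entirely elementary, so the only real work is the bookkeeping, and the step I expect to require the most care is verifying that $R_1$ never dominates $R_2$, i.e. that $\jbrac{d}^{\gamma-\beta}I_\gamma(d) \lsim \phi_\beta(d)$. This couples the size of $\gamma$ relative to $1$ with the size of $\beta$ relative to $1$, so one must separately confirm the logarithmic endpoint $\beta = 1$ (which forces $\gamma \le 1$) and the subcritical case $\beta < 1$ (where $\beta+\gamma>1$ forces $\gamma > 1-\beta$). The hypothesis $\beta+\gamma>1$ is used precisely to guarantee convergence of the $R_3$ tail integral, while $0 \le \gamma \le \beta$ ensures that the outer factor carries the smaller exponent $-\gamma$, as claimed.
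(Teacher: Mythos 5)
Your argument is correct: the translation/reflection reduction, the three-region splitting at $|x|\le d/2$, $|x-d|\le d/2$, and the complement, and the case-checking $\jbrac{d}^{\gamma-\beta}I_\gamma(d)\lsim\phi_\beta(d)$ using $\gamma\le\beta$ all go through, with $\beta+\gamma>1$ entering exactly where you say (convergence of the tail). The paper itself gives no proof --- Lemma \ref{A.2} is quoted from \cite{tz_cubicnls} --- and your region decomposition is precisely the standard argument behind that cited lemma, so there is nothing to flag.
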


\begin{lemma} \label{A.3}

For fixed $\frac12<\rho<1$, we have

$$\int \frac{1}{\jbrac{x}^\rho \sqrt{|x-a|}} dx \lsim \frac{1}{\jbrac{a}^{\rho-\frac{1}{2}}}.$$
\end{lemma}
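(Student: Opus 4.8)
The plan is to prove this by elementary domain decomposition and scaling, after two normalizations. First I would note that both the integral and the right-hand side are even in $a$ (substitute $x \mapsto -x$ and use $\jbrac{-x} = \jbrac{x}$), so I may assume $a \geq 0$. I would also record that the hypothesis $\frac12 < \rho < 1$ is exactly what guarantees both finiteness and the stated decay: near $x = a$ the singularity $|x-a|^{-1/2}$ is integrable, while as $|x| \to \infty$ the integrand behaves like $|x|^{-\rho - 1/2}$ with $\rho + \frac12 > 1$, so the tails converge.

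For bounded $a$, say $0 \le a \le 2$, both sides are comparable to $1$: splitting into $|x - a| \le 1$ (where $\jbrac{x}^{-\rho} \le 1$ and $\int_{|x-a|\le1}|x-a|^{-1/2}\,dx$ is a finite constant) and $|x-a| > 1$ (where $|x-a|^{-1/2} \le 1$, keeping the $\jbrac{x}^{-\rho}$ decay on $|x| \ge 4$ and using the trivial bound on $|x| \le 4$) shows the integral is $\lsim 1 \sim \jbrac{a}^{-(\rho - 1/2)}$.

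The main case is $a \geq 2$, where I would split $\R$ into three pieces matched to the two competing scales $x \sim 0$ and $x \sim a$. On the singular region $|x - a| \le a/2$ one has $\jbrac{x} \sim a$, so the $\jbrac{x}^{-\rho}$ factor contributes $a^{-\rho}$ while $\int_{|x-a|\le a/2}|x-a|^{-1/2}\,dx \sim a^{1/2}$, for a total of $a^{1/2-\rho}$. On the bulk region $|x-a| > a/2$ with $|x| \le 2a$ the singular factor is harmless, $|x-a|^{-1/2}\lsim a^{-1/2}$, while $\int_{|x|\le 2a}\jbrac{x}^{-\rho}\,dx \sim a^{1-\rho}$ (here $\rho < 1$ is used), again giving $a^{-1/2}\cdot a^{1-\rho} = a^{1/2-\rho}$. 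Finally, on the far tail $|x| > 2a$ one has $|x - a| \geq |x|/2$, so the integrand is $\lsim |x|^{-\rho - 1/2}$ and $\int_{|x|>2a}|x|^{-\rho-1/2}\,dx \sim a^{1/2-\rho}$ (here $\rho > \frac12$ is used for convergence). Summing the three contributions yields $\lsim a^{1/2 - \rho} \sim \jbrac{a}^{-(\rho - 1/2)}$, completing the bound.

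There is no serious obstacle here; the only thing to be careful about is choosing the decomposition so that each region is estimated by a single monomial in $a$, and checking that the two endpoint constraints on $\rho$ are each genuinely needed — $\rho < 1$ to control the growth of $\int \jbrac{x}^{-\rho}\,dx$ over the bulk, and $\rho > \frac12$ to make the far tail converge. That all three regions produce the \emph{same} power $a^{1/2-\rho}$ is precisely the content of the lemma.
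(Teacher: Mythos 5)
Your proof is correct. Note that the paper does not actually prove Lemma \ref{A.3}: it is quoted without proof from \cite{tz_cubicnls} (Lemma A.3 there), so your elementary three-region decomposition serves as a complete, self-contained substitute, and it is the standard argument for estimates of this type: a singular region $|x-a|\le a/2$ where $\jbrac{x}\sim\jbrac{a}$ and the square-root singularity integrates to $a^{1/2}$, a bulk region where $|x-a|\gsim a$ so the singular factor is flat, and a far tail where $|x-a|\gsim|x|$ so the two factors combine to the integrable power $|x|^{-\rho-1/2}$; each region correctly produces $a^{1/2-\rho}$, and your tracking of where $\rho<1$ and $\rho>\frac12$ enter is exactly right. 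One small imprecision in the bounded case $0\le a\le2$: on the set $\{|x-a|>1,\ |x|\ge4\}$ you cannot literally replace $|x-a|^{-1/2}$ by $1$ and integrate $\jbrac{x}^{-\rho}$ alone, since $\rho<1$ makes $\int\jbrac{x}^{-\rho}\,dx$ divergent; as your opening paragraph in fact indicates, one should keep both factors and use $|x-a|\ge|x|/2$ there, so the integrand is $\lsim|x|^{-\rho-1/2}$, integrable because $\rho>\frac12$. With that reading the bounded case closes and the proof is complete.
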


For completeness, we include Young's inequality as well.
\begin{lemma} [Young's Inequality] \cite[Proposition 8.9]{folland} \label{A.4}\\
If $1+\frac1r = \frac1p +\frac1q $ with $1\leq p,q,r \leq \infty$, then
$$\|f * g \|_{L^r} \leq \|f\|_{L^p} \, \|g\|_{L^q}.$$
\end{lemma}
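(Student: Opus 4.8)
The plan is to deduce Young's inequality from the generalized Hölder inequality applied to a carefully chosen factorization of the integrand, together with Fubini's theorem and translation invariance of Lebesgue measure. First I would dispose of the endpoint cases. If $r = \infty$, the hypothesis forces $\frac1p + \frac1q = 1$, so $p$ and $q$ are conjugate exponents and the bound $|f*g(x)| \le \int |f(x-y)||g(y)|\,dy \le \norm{f}_{L^p}\norm{g}_{L^q}$ is immediate from Hölder's inequality applied for each fixed $x$. If either $p$ or $q$ equals $\infty$, the relation $1 + \frac1r = \frac1p + \frac1q$ forces the other to be $1$ and $r = \infty$, so these reduce to the case just treated. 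It therefore remains to handle $1 \le p, q, r < \infty$.

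For this main case I would assume without loss of generality that $f, g \ge 0$ and split the integrand into three factors
$$f(x-y)\,g(y) = \big(f(x-y)^p\,g(y)^q\big)^{1/r} \cdot f(x-y)^{1 - p/r} \cdot g(y)^{1 - q/r},$$
then apply Hölder's inequality in $y$ with the three exponents $r$, $\tfrac{pr}{r-p}$, and $\tfrac{qr}{r-q}$. The essential bookkeeping is that the reciprocals of these exponents sum to $\frac1r + (\frac1p - \frac1r) + (\frac1q - \frac1r) = \frac1p + \frac1q - \frac1r = 1$, which is exactly the hypothesis rearranged, so Hölder applies and yields
$$f*g(x) \le \Big(\int f(x-y)^p g(y)^q\,dy\Big)^{1/r}\,\norm{f}_{L^p}^{1-p/r}\,\norm{g}_{L^q}^{1-q/r}.$$
Raising to the $r$-th power, integrating in $x$, and using Fubini together with translation invariance to evaluate $\iint f(x-y)^p g(y)^q\,dy\,dx = \norm{f}_{L^p}^p \norm{g}_{L^q}^q$ gives $\norm{f*g}_{L^r}^r \le \norm{f}_{L^p}^r \norm{g}_{L^q}^r$, and taking $r$-th roots finishes the proof.

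The computation is entirely routine once the factorization is in place, so there is no serious obstacle; the only point demanding care is the exponent arithmetic and the degenerate subcases where an auxiliary exponent $\tfrac{pr}{r-p}$ or $\tfrac{qr}{r-q}$ is infinite (for instance $p = r$, which forces $q = 1$). In such a case the corresponding factor has exponent $1 - p/r = 0$ and simply drops out, reducing the threefold Hölder to the ordinary two-function version; I would flag this explicitly rather than fold it into the generic formula. As a remark I would note an equally clean alternative: for fixed $f \in L^p$ the map $g \mapsto f*g$ is bounded $L^1 \to L^p$ (Minkowski's integral inequality) and $L^{p'} \to L^\infty$ (Hölder), and since the hypothesis forces $p \le r$ the interpolation parameter $\theta = 1 - p/r$ lies in $[0,1]$, so the Riesz--Thorin theorem recovers the full range of $(q,r)$ at the cost of invoking a heavier tool.
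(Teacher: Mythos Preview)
Your proof is correct and follows the standard three-factor H\"older argument; however, the paper does not actually prove this lemma. It is stated in the appendix with a citation to Folland \cite[Proposition 8.9]{folland} and no further argument, so there is nothing to compare against beyond noting that your approach is precisely the classical one given in that reference.
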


\section*{Acknowledgements}
The author would like to thank his Ph.D advisor Nikolaos Tzirakis for many helpful discussions and comments.

\end{document}